\pgfplotsset{compat=1.17}
\newcommandx{\rtodo}[1]{\tikzexternaldisable\leavevmode\checkoddpage\ifoddpage\normalmarginpar\todo{#1}\else\reversemarginpar\todo{#1}\fi\tikzexternalenable}
\renewcommand{\todo}[2][]{\tikzexternaldisable\@todo[#1]{#2}\tikzexternalenable} 
\DeclareMathOperator*{\bigtimes}{\scalerel*{\times}{\textstyle\sum}}
\crefname{hypothesis}{Hypothesis}{Hypotheses}
\crefname{lemma}{Lemma}{Lemmata}
\newlist{assumptionslist}{enumerate}{1}
\setlist[assumptionslist,1]{wide, labelindent=0pt, font=\bfseries, label={(A\arabic*)}, resume, leftmargin=3em}
\crefname{assumptionslisti}{assumption}{assumptions}
\DeclareMathOperator*{\argmin}{arg\,min}
\newcommand{\dd}{\text{d}} 
\title{A Scalable Two-Level Domain Decomposition Eigensolver for Periodic Schr\"odinger Eigenstates in Anisotropically Expanding Domains\thanks{\LaTeX{} compiled on \today.
\funding{Supported by the German Research Foundation (DFG), project 411724963.}}}
\author{%
Lambert Theisen\thanks{%
  Corresponding author. Applied and Computational Mathematics, RWTH Aachen University, 52062 Aachen, Germany (\email{lambert.theisen@rwth-aachen.de}, \url{https://thsn.dev}) and Institute of Applied Analysis and Numerical Simulation, University of Stuttgart, 70569 Stuttgart, Germany.
}%
\and%
Benjamin Stamm\thanks{%
  Institute of Applied Analysis and Numerical Simulation, University of Stuttgart, 70569 Stuttgart, Germany (\email{best@ians.uni-stuttgart.de}).%
}%
}%
\begin{document}

\maketitle

\begin{abstract}
  Accelerating iterative eigenvalue algorithms is often achieved by employing a spectral shifting strategy. Unfortunately, improved shifting typically leads to a smaller eigenvalue for the resulting shifted operator, which in turn results in a high condition number of the underlying solution matrix, posing a major challenge for iterative linear solvers. This paper introduces a two-level domain decomposition preconditioner that addresses this issue for the linear Schrödinger eigenvalue problem, even in the presence of a vanishing eigenvalue gap in non-uniform, expanding domains. Since the quasi-optimal shift, which is already available as the solution to a spectral cell problem, is required for the eigenvalue solver, it is logical to also use its associated eigenfunction as a generator to construct a coarse space. We analyze the resulting two-level additive Schwarz preconditioner and obtain a condition number bound that is independent of the domain's anisotropy, despite the need for only one basis function per subdomain for the coarse solver. Several numerical examples are presented to illustrate its flexibility and efficiency.
\end{abstract}

\begin{keywords}
Schrödinger equation, iterative methods, preconditioning, domain decomposition, coarse spaces, finite element method
\end{keywords}

\begin{MSCcodes}
65N25, 65F15, 65N30, 65F10, 65N22, 65N55, 65F08
\end{MSCcodes}


\section{Introduction}\label{sec:introduction}
This paper presents a robust and efficient method to solve parametrized linear Schrödinger eigenvalue problems (EVPs) on open bounded domains \(\Omega_L \subset \mathbb{R}^d\) of the form: Find \((\phi_L, \lambda_L)\in (H^1_0(\Omega_L) \setminus \{0\})\times \mathbb R\), such that
\begin{align}\label{eq:schroedingerEquation}
  - \Delta \phi_L + V \phi_L &= \lambda_L \phi_L
  \quad
  \textnormal{ in } \Omega_L
  ,
\end{align}
where \(H^1_0(\Omega_L)\) denotes the standard Sobolev space of index \(1\) with zero Dirichlet trace on \(\partial \Omega_L\). The main focus will be on anisotropically expanding domains \(\Omega_L\), which are modeled by \(d\)-dimensional boxes given by
\begin{align}\label{eq:geometry-description}
  \boldsymbol{z} &\in \Omega_L = {(0, L)}^{p} \times {(0, \ell)}^{q}
  =:
  \Omega_{\boldsymbol{x}} \times \Omega_{\boldsymbol{y}} \subset \mathbb{R}^d
  \quad
  \textnormal{ with }
  L,\ell \in \mathbb{R}
  ,
\end{align}
where the spatial variables are collected as \(\boldsymbol{z} := (\boldsymbol{x}, \boldsymbol{y}) = (x_1,\dots,x_p,y_1,\dots,y_q)\), highlighting the fact that some directions expand with \(L \to \infty\) while the other directions are fixed with \(\ell = \operatorname{const}\). Note that the box setup is only chosen for simplicity of the analysis, and we provide an elegant extension in \cref{sec:numericalExperiments} to the general case. Although \(L \in \mathbb{R}\) in general, we will often assume \(L \in \mathbb{N}\) for simplicity in the analysis and notation. For the external potential \(V\) in \cref{eq:schroedingerEquation}, we assume the following:  
\begin{assumptionslist}
  \item\label{ass:pot-dir-periodic} The potential \(V\) is directional-periodic with a period of \(1\) in each expanding direction: \(V(\boldsymbol{x}, \boldsymbol{y}) = V\left(\boldsymbol{x} + \boldsymbol{i} , \boldsymbol{y}\right)\) for all \((\boldsymbol{x},\boldsymbol{y}) \in \Omega_L, \boldsymbol{i} \in \mathbb{Z}^p\);
  \item\label{ass:pot-ess-bounded} The potential \(V\) is essentially bounded: \(V \in L^\infty(\Omega_L)\).
\end{assumptionslist}
\par
The period of \(1\) in \labelcref{ass:pot-dir-periodic} is only chosen for simplicity and \labelcref{ass:pot-ess-bounded} allows us to assume \(V \ge 0\) a.e.~in \(\Omega_L\) since a constant spectral shift to \(V\) does not affect the eigenfunctions of \cref{eq:schroedingerEquation}.
\cref{sfig:geometricSetup} presents the geometrical framework and indicates the properties of \(V\). After discretization, the problem~\eqref{eq:schroedingerEquation} leads to a large, sparse, generalized algebraic EVP of the form
\begin{equation}\label{eq:algebraicEVP}
  \boldsymbol{A} \boldsymbol{x} = \lambda \boldsymbol{B} \boldsymbol{x}
  ,
\end{equation}
that is then solved with classical iterative algorithms like, for example, the inverse power method (IPM)~\cite{tapiaInverseShiftedInverse2018} or the Locally Optimal Preconditioned Conjugate Gradient (LOPCG)~\cite{knyazevPreconditionedConjugateGradient1991} method. However, the convergence speed of these algorithms depends on the \emph{fundamental ratio} \(r_L := \lambda_L^{(1)} / \lambda_L^{(2)}\). And for \(L \to \infty\), the \emph{fundamental gap} collapses (i.e., \(\lambda_L^{(1)} - \lambda_L^{(2)} \to 0\)), leading to \(r_L \to 1\) as we extensively illustrate in the motivational \cref{sec:motivation}. As a result, the convergence speed of iterative eigensolvers, equal to the fundamental ratio, deteriorates drastically and becomes arbitrarily bad.
\par
This problem was overcome in~\cite{stammQuasiOptimalFactorizationPreconditioner2022} by the quasi-optimal shift-and-invert (QOSI) preconditioner that replaces the original matrix from \cref{eq:algebraicEVP} by a shifted \(\boldsymbol{A}_{\sigma} := \boldsymbol{A} - \sigma \boldsymbol{B}\). The shifting parameter \(\sigma \in \mathbb{R}\) is set to be the \(L\)-asymptotic limit of the first eigenvalue, i.e.~\(\sigma := \lim_{L \to \infty} \lambda_L^{(1)}\), which can be easily computed by solving the same operator on a unit cell domain \(\Omega_1\) where periodic conditions replace all Dirichlet boundary conditions in the expanding \(\boldsymbol{x}\)-direction as it was shown in~\cite{stammQuasiOptimalFactorizationPreconditioner2022}. More precisely, \((\psi, \sigma)\) is the first eigenpair to the problem
\begin{equation}\label{eq:cell-problem}
  \left\{
    \begin{aligned}
      -\Delta \psi + V \psi &= \sigma \psi
      \textnormal{ in } \Omega_1 = {(0,1)}^p \times {(0,\ell)}^q
      ,
      \\
      \psi&=0 \textnormal{ on } \Omega_{\boldsymbol{x}} \times {\{0,\ell\}}^q
      ,
      \\
      \psi &\textnormal{ and } \partial_{x_i}\psi \textnormal{ are } x_i\textnormal{-periodic}
      .
    \end{aligned}
  \right.
\end{equation}
The QOSI preconditioner \(\boldsymbol{A}^{-1}_{\sigma}\) then leads to a uniformly bounded shifted fundamental ratio as, for some \(L^* \in \mathbb{R}\), \(r_{L}(\sigma) := (\lambda_L^{(1)} - \sigma) / (\lambda_L^{(2)} - \sigma) \le C < 1\) for all \(L > L^*\) such that the eigensolver converges in \(k_{\textnormal{it}} \in \mathcal{O}(1)\) iterations (i.e., up to a multiplicative constant optimal).
\par
However, in each eigensolver iteration, c.f.~inverse iteration \(\boldsymbol{x}_{k+1} = \boldsymbol{A}_{\sigma}^{-1} \boldsymbol{B} \boldsymbol{x}_k\), the application of \(\boldsymbol{A}^{-1}_{\sigma}\) amounts to solving shifted linear systems of the form
\begin{equation}\label{eq:abstract-shifted-linear-system}
  (\boldsymbol{A} - \sigma \boldsymbol{B}) \boldsymbol{x} = \boldsymbol{b}
  .
\end{equation}
The catch here is that in the critical limit, \(L \to \infty\), with fixed discretization \(h = \textnormal{const}\), \(\lambda^{(1)}_L \to \sigma\) by construction and thus the condition number \(\kappa(\boldsymbol{A} - \sigma \boldsymbol{B}) = (\lambda_L^{(n)}-\sigma)/(\lambda_L^{(1)}-\sigma) \to \infty\) as observed in~\cite{stammQuasiOptimalFactorizationPreconditioner2022}. The issue of an exploding condition number is dramatic when we employ iterative linear solvers like the conjugate gradient (CG)~\cite{hestenesMethodsConjugateGradients1952} method to solve \cref{eq:abstract-shifted-linear-system} since their convergence rate has now become arbitrarily bad.

\subsection{Our contribution}
This paper fixes these problems by introducing and analyzing a new preconditioner \(\boldsymbol{M}_{\sigma}^{-1}\), now for the shifted linear systems of \cref{eq:abstract-shifted-linear-system}. As it is very natural for the present geometry \(\Omega_L\) and potentials \(V\), the preconditioner uses classical, overlapping Schwarz domain decomposition (DD)~\cite{doleanIntroductionDomainDecomposition2015}. However, a coarse space must be prescribed for robustness and numerical scalability. Surprisingly, it is even directly available since it is based on the spectral asymptotics of the problem. Since the coarse space components are inherently related to the function \(\psi\) of \cref{eq:cell-problem} used in the so-called \emph{factorization principle} (c.f.~\cite{allaireHomogenizationSpectralProblem2000,stammQuasiOptimalFactorizationPreconditioner2022}) to derive \(\sigma\), we call \(\boldsymbol{M}_{\sigma}^{-1}\) the periodic factorization (PerFact) preconditioner. The PerFact-preconditioned shifted linear systems can then also be solved in quasi-optimal iterations since \(\kappa(\boldsymbol{M}_{\sigma}^{-1} \boldsymbol{A}_{\sigma}) \le C\) for all \(L\). As a result, only the unique combination of the QOSI eigen-preconditioner and the PerFact linear-preconditioner, in the end, yields an efficient iterative algorithm that is robust for all domain sizes \(L\).
\par
For the provided analysis, we embed the preconditioner partly in the theory of spectral coarse spaces while still focussing on the unusual case of anisotropically expanding domains. This setup, and the challenging fact that we asymptotically shift with the lowest eigenvalue, required a new SPSD splitting and the introduction of an auxiliary periodic neighborhood decomposition.

\subsection{Motivation: The \emph{shifting dilemma} in the Laplace EVP}\label{sec:motivation}
When it comes down to highlighting the critical difficulty and motivating the present work, the Laplace EVP with \(V(\boldsymbol{z})=0\) in \cref{eq:schroedingerEquation} is the perfect academic example fulfilling \cref{ass:pot-dir-periodic,ass:pot-ess-bounded}. 
For a standard second-order five-point finite difference stencil of a two-dimensional \(\Omega_L\) with \(p,q,\ell=1\), the eigenvalues of the resulting discretization matrix \(\boldsymbol{A}\) are all known. Let \(n\) denote the number of grid points per unit length and introduce the uniform mesh size \(h:=1/(n+1)\). Then all eigenvalues are given (c.f.~the formula in~\cite[p40;p63]{ciaramellaIterativeMethodsPreconditioners2022} using \(2\sin^2(x/2)=1-\cos{x}\)) by
\begin{equation}
  \lambda_{i,j} \in \left\{ \tfrac{4}{h^2} \left( \sin^2(\tfrac{\pi h i}{2L}) + \sin^2(\tfrac{\pi h j}{2}) \right) \middle|\ i \in \{1,\dots,L(n+1)-1\}, j \in \{1,\dots,n\} \right\}
  .
\end{equation}
If we now employ the \(\sigma\)-shifted inverse power method (\(\boldsymbol{x}_{k+1} := \mathcal{R}(\boldsymbol{A}_{\sigma}^{-1} \boldsymbol{x}_k)\) with some retraction \(\mathcal{R}:\mathbb{R}^n \mapsto \mathbb{S}^{n-1}\) onto the unit sphere) for \(\boldsymbol{A}_{\sigma}^{-1} = (\boldsymbol{A} - \sigma \boldsymbol{I})^{-1}\) as the outer iteration, the convergence rate is given by the fundamental ratio~\cite[p366]{golubMatrixComputations2013} of the first two eigenvalues \(\rho_{\textnormal{IPM}} = r_L(\sigma) = \tfrac{\lambda_1 - \sigma}{\lambda_2 - \sigma}\) where (for \(L > \ell = 1\))  
\begin{equation}\label{eq:discrete-laplace-evals12}
  \lambda_1 := \lambda_{1,1} = \tfrac{4}{h^2} \left( \sin^2(\tfrac{\pi h}{2L}) + \sin^2(\tfrac{\pi h}{2}) \right)
  , \;
  \lambda_2 := \lambda_{2,1} = \tfrac{4}{h^2} \left( \sin^2(\tfrac{2 \pi h}{2L}) + \sin^2(\tfrac{\pi h}{2}) \right)
  .
\end{equation}
On the other hand, in each outer iteration, a large, sparse, linear system with \(\boldsymbol{A}_{\sigma}\) needs to be solved. Using the CG method, the convergence rate of that inner solver is given by \(\rho_{\textnormal{CG}} = (\sqrt{\kappa(\boldsymbol{A}_{\sigma})}-1) /(\sqrt{\kappa(\boldsymbol{A}_{\sigma})}+1)\) with
\begin{equation}\label{eq:discrete-laplace-evals-condition-max}
  \kappa(\boldsymbol{A}_{\sigma}) = (\lambda_{\max} - \sigma) / (\lambda_1 - \sigma)
  ,
  \textnormal{ and }
  \lambda_{\max} = \lambda_{L(n+1)-1,n} = \tfrac{4}{h^2} \left( \cos^2(\tfrac{\pi h}{2L}) + \cos^2(\tfrac{\pi h}{2}) \right)
  .
\end{equation}
Now, the error for the first eigenvector in the IPM reduces~\cite[Thm.~8.2.1]{golubMatrixComputations2013} as \(|\sin{\theta_k}| \le \rho_{\textnormal{IPM}}^k \tan{\theta_0}\) with \(\theta_k \in [0,\tfrac{\pi}{2}]\) being the angle of the current iterate \(\boldsymbol{x}_k\) to the real eigenvector \(\boldsymbol{x}^{(1)}\) defined by \(\cos(\theta_k) = |\langle \boldsymbol{x}^{(1)}, \boldsymbol{x}_k \rangle|\). So, in order to decrease the ratio \(|\sin{\theta_k}|/\tan{\theta_0}\) by a factor of \(1/R \in \mathbb{R}\), we need \(n_{\textnormal{IPM}} = - \ln R / \ln \rho_{\textnormal{IPM}}\) iterations where \(\ln(\cdot)\) denotes the natural logarithm. Each application of \(\boldsymbol{A}_{\sigma}\) is then executed using the CG algorithm until the residual reduction \({\|\boldsymbol{r}_k\|}_2 / {\|\boldsymbol{r}_0\|}_2 \le 1/Q \in \mathbb{R}\) is archived, where \(\boldsymbol{r}_k\) is the residual in step \(k\). Using the residual-to-error-energy bound~\cite[p4]{kelleyIterativeMethodsLinear1995} and~\cite[Lem.~C.10]{toselliDomainDecompositionMethods2005}, we obtain the relation
\begin{equation}
  \frac{{\|\boldsymbol{r}_k\|}_2}{{\|\boldsymbol{r}_0\|}_2}
  \le
  \sqrt{\kappa(\boldsymbol{A}_{\sigma})} \frac{{\|\boldsymbol{e}_k\|}_{\boldsymbol{A}_{\sigma}}}{{\|\boldsymbol{e}_0\|}_{\boldsymbol{A}_{\sigma}}}
  \le
  2 \sqrt{\kappa(\boldsymbol{A}_{\sigma})} \rho_{\textnormal{CG}}^k
  ,
\end{equation}
in which \(\boldsymbol{e}_k\) is the error between the current inner iterate and the linear system's solution. Thus, each inner CG algorithm needs \(n_{\textnormal{CG}} = - \ln(2\sqrt{\kappa(\boldsymbol{A}_{\sigma})}Q) / \ln{\rho_{\textnormal{CG}}}\) iterations. Assuming no prior information, equally good initial guesses, and sharpness of the bounds, the inner-outer algorithm, then, in total, would need \(n_{\textnormal{tot}} = n_{\textnormal{IPM}} n_{\textnormal{CG}}\) CG iterations, which is a good measure of the computational cost.
\par
For an algorithm to be efficient, it must be scalable w.r.t.~\(L\). Thus, doubling the system size (i.e., \(L \mapsto 2L\)) while doubling computational resources (i.e., ranks or processes) leads ideally to the same wall clock time needed for the algorithm. Assuming perfect parallelizability of all operations and because the system matrix is twice as large in that case, the wall clock time can only be kept constant if the total number of operations does not increase when doubling \(L\). However, even in this simple example, we observe the \emph{shifting dilemma} for increasing domain lengths \(L \to \infty\) and constant \(h\) since we obtain the following for the different possible shifts:
\par
\textbf{No shift} (\(\sigma=0\)): The application of no shift results in the fundamental ratio converging to 1 since \(\lim_{L \to \infty} \lambda_1/\lambda_2 = 1\) using the values from \cref{eq:discrete-laplace-evals12}. The condition number \(\kappa(\boldsymbol{A})\), on the other hand, converges to a finite value as \(\lim_{L \to \infty}\lambda_{\max} / \lambda_1 = (1 + \cos^2(\tfrac{\pi h}{2})) / \sin^2(\tfrac{\pi h}{2})\) using \cref{eq:discrete-laplace-evals12,eq:discrete-laplace-evals-condition-max}. Thus, for \(L \to \infty\), we obtain \(n_{\textnormal{IPM}} \to \infty\) and \(n_{\textnormal{CG}} \to C > 0\), and the algorithm can not be scalable as \(n_{\textnormal{tot}} \to \infty\).
\par
\textbf{QOSI shift}~\cite{stammQuasiOptimalFactorizationPreconditioner2022} (\(\sigma = \lambda_{\infty} = \tfrac{4}{h^2}\sin^2(\tfrac{\pi h}{2})\)): Using the quasi-optimal shift leads to a uniform bounded ratio \(\tfrac{\lambda_1 - \sigma}{\lambda_2 - \sigma} = \sin^2(\tfrac{\pi h}{2L}) / \sin^2(\tfrac{2 \pi h}{2L}) \to 1/4\) for \(L \to \infty\) using l'H\^{o}pital's rule and thus bounded outer itations with \(n_{\textnormal{IPM}} \le C\). However, the condition number \(\kappa(\boldsymbol{A}_{\sigma})\) of the shifted matrix now explodes when \(L \to \infty\) as \(\lim_{L \to \infty} (\lambda_{\max} - \sigma) / (\lambda_1 - \sigma) > \lim_{L \to \infty} (\cos^2(\tfrac{\pi h}{2L}) / \sin^2(\tfrac{\pi h}{2L})) = \infty\) by the minorant argument. This behavior is natural since we shifted by the asymptotic limit of the first eigenvalue \(\lambda_1\). Thus, for \(L \to \infty\), \(n_{\textnormal{CG}} \to \infty\), and \(n_{\textnormal{tot}} \to \infty\).
\par
\textbf{Adaptive hybrid shift} (\(\sigma \mapsto \sigma_L \in (0,\lambda_\infty)\)): If both extreme cases do not provide a scalable method, one might ask if there is a chance that a (possible \(L\)-dependent) shift in the whole interval \((0,\lambda_{\infty})\) results in \(n_{\textnormal{tot}}\) to be bounded w.r.t.~\(L\). In \cref{sfig:iterations-numbers-laplace-evp}, we evaluated the formulas for \(n_{\textnormal{IPM}}\) and \(n_{\textnormal{CG}}\) using an exemplary setup of \(h=1/10, R=e^7, Q=e^4\) to obtain values for \(n_{\textnormal{tot}}\) depending on the shift \(\sigma\) for different \(L \in \{4,5,6,7\}\). We observe that for all \(\sigma\), the total CG iterations \(n_{\textnormal{tot}}\) grow as \(L \to \infty\). Thus, there is also no sweet spot in between both extremal cases, which suggest that also no adaptive shift can produce a scaling algorithm since all possible shifts still result in \(n_{\textnormal{tot}} \to \infty\).
\par
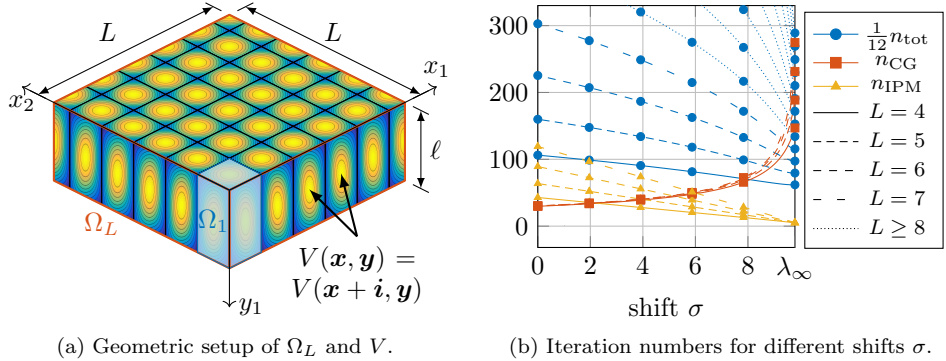
\begin{figure}[t]%
  \centering%
  \subfloat[%
    Geometric setup of \(\Omega_L\) and \(V\).\label{sfig:geometricSetup}%
  ]{%
    \newcommand{\datapath}{./plots/sketches}%



\tdplotsetmaincoords{60}{135}%
\begin{tikzpicture}[tdplot_main_coords,line join=bevel, scale=0.6]%

\definecolor{color1}{rgb}{0,    ,0.4470,0.7410}
\definecolor{color2}{rgb}{0.8500,0.3250,0.0980}
\definecolor{color3}{rgb}{0.9290,0.6940,0.1250}
\definecolor{color4}{rgb}{0.4940,0.1840,0.5560}
\definecolor{color5}{rgb}{0.4660,0.6740,0.1880}
\definecolor{color6}{rgb}{0.3010,0.7450,0.9330}
\definecolor{color7}{rgb}{0.6350,0.0780,0.1840}

\pgfmathsetmacro{\N}{5}%
\pgfmathsetmacro{\elll}{2}%
\pgfmathsetmacro{\eps}{0.5} 

\filldraw[
  fill stretch image=\datapath/s-top-x1x2.png
]%
(-\eps+\eps-\eps,-\eps+\eps-\eps,\elll) -- (-\eps+\eps+\eps-\eps,-\eps+\eps-\eps,\elll) -- (-\eps+\eps+\eps-\eps,-\eps+\eps+\eps-\eps,\elll) -- (-\eps+\eps-\eps,-\eps+\eps+\eps-\eps,\elll) -- cycle;

\foreach \i in {1,...,\N}{
  \pgfmathsetmacro\iisint{abs(Mod(\i,1))<0.001}
  \pgfmathsetmacro\idist{\iisint*1+Mod(\i,1)}
  \pgfmathsetmacro\jisint{abs(Mod(\eps,1))<0.001}
  \pgfmathsetmacro\jdist{\jisint*1+Mod(\eps,1)}
  \pgfmathsetmacro\j{\eps}
  \filldraw[
    fill stretch image=\datapath/s-top-x2.png
  ]
  (0+\i-\idist,-\eps+\j-\jdist,\elll) -- (0+\idist+\i-\idist,-\eps+\j-\jdist,\elll) -- (0+\idist+\i-\idist,-\eps+\jdist+\j-\jdist,\elll) -- (0+\i-\idist,-\eps+\jdist+\j-\jdist,\elll) -- cycle;
  \filldraw[
    fill stretch image=\datapath/s-side-x2.png
  ]
  (0+\idist+\i-\idist,-\eps+\j-\jdist,0) -- (0+\idist+\i-\idist,-\eps+\j-\jdist,\elll) -- (0+\idist+\i-\idist,-\eps+\jdist+\j-\jdist,\elll) -- (0+\idist+\i-\idist,-\eps+\jdist+\j-\jdist,0) -- cycle;
}

\foreach \j in {1,2,...,\N}{
  \pgfmathsetmacro\iisint{abs(Mod(\eps,1))<0.001}
  \pgfmathsetmacro\idist{\iisint*1+Mod(\eps,1)}
  \pgfmathsetmacro\jisint{abs(Mod(\j,1))<0.001}
  \pgfmathsetmacro\jdist{\jisint*1+Mod(\j,1)}
  \pgfmathsetmacro\i{\eps}
  \filldraw[
    fill stretch image=\datapath/s-top-x1.png
  ]
  (-\eps+\i-\idist,0+\j-\jdist,\elll) -- (-\eps+\idist+\i-\idist,0+\j-\jdist,\elll) -- (-\eps+\idist+\i-\idist,0+\jdist+\j-\jdist,\elll) -- (-\eps+\i-\idist,0+\jdist+\j-\jdist,\elll) -- cycle;
  \filldraw[
    fill stretch image=\datapath/s-side-x1.png
  ]
  (-\eps+\i-\idist,0+\jdist+\j-\jdist,0) -- (-\eps+\idist+\i-\idist,0+\jdist+\j-\jdist,0) -- (-\eps+\idist+\i-\idist,0+\jdist+\j-\jdist,\elll) -- (-\eps+\i-\idist,0+\jdist+\j-\jdist,\elll) -- cycle;
}

\foreach \i in {0.5,1,2,...,\N}{
  \foreach \j in {1,2,...,\N}{
    \pgfmathsetmacro\iisint{abs(Mod(\i,1))<0.001}
    \pgfmathsetmacro\idist{\iisint*1+Mod(\i,1)}
    \pgfmathsetmacro\jisint{abs(Mod(\j,1))<0.001}
    \pgfmathsetmacro\jdist{\jisint*1+Mod(\j,1)}
    \filldraw[
      fill stretch image=\datapath/s-pot.png,
    ]
    (0+\i-\idist,0+\j-\jdist,\elll) -- (0+\idist+\i-\idist,0+\j-\jdist,\elll) -- (0+\idist+\i-\idist,0+\jdist+\j-\jdist,\elll) -- (0+\i-\idist,0+\jdist+\j-\jdist,\elll) -- cycle;
    \filldraw[
      fill stretch image=\datapath/s-pot.png
    ]
    (0+\i-\idist,0+\jdist+\j-\jdist,0) -- (0+\idist+\i-\idist,0+\jdist+\j-\jdist,0) -- (0+\idist+\i-\idist,0+\jdist+\j-\jdist,\elll) -- (0+\i-\idist,0+\jdist+\j-\jdist,\elll) -- cycle;
    \filldraw[
      fill stretch image=\datapath/s-pot.png
    ]
    (0+\idist+\i-\idist,0+\j-\jdist,0) -- (0+\idist+\i-\idist,0+\j-\jdist,\elll) -- (0+\idist+\i-\idist,0+\jdist+\j-\jdist,\elll) -- (0+\idist+\i-\idist,0+\jdist+\j-\jdist,0) -- cycle;
  }
}

\filldraw[
  fill=color1!25,
  fill stretch image=\datapath/s-pot.png,
  fill image opacity=0.25
]
(0+\N-1,0+\N-1,\elll) -- (1+\N-1,0+\N-1,\elll) -- (1+\N-1,1+\N-1,\elll) -- (0+\N-1,1+\N-1,\elll) -- cycle;
\filldraw[
  fill=color1!25,
  fill stretch image=\datapath/s-pot.png,
  fill image opacity=0.25
]
(0+\N-1,1+\N-1,0) -- (1+\N-1,1+\N-1,0) -- (1+\N-1,1+\N-1,\elll) -- (0+\N-1,1+\N-1,\elll) -- cycle;
\filldraw[
  fill=color1!25,
  fill stretch image=\datapath/s-pot.png,
  fill image opacity=0.25
]
(1+\N-1,0+\N-1,0) -- (1+\N-1,0+\N-1,\elll) -- (1+\N-1,1+\N-1,\elll) -- (1+\N-1,1+\N-1,0) -- cycle;

\draw[
  color2, thick,
] (-\eps,-\eps,\elll) -- (\N,-\eps,\elll) -- (\N,\N,\elll) -- (-\eps,\N,\elll) -- cycle;
\draw[
  color2, thick,
] (\N,-\eps,0) -- (\N,-\eps,\elll) -- (\N,\N,\elll) -- (\N,\N,0) -- cycle;
\draw[
  color2, thick,
] (\N,\N,0) -- (\N,\N,\elll) -- (-\eps,\N,\elll) -- (-\eps,\N,0) -- cycle;

\draw[
  color1
]
(0+\N-1,0+\N-1,\elll) -- (1+\N-1,0+\N-1,\elll) -- (1+\N-1,1+\N-1,\elll) -- (0+\N-1,1+\N-1,\elll) -- cycle;
\draw[
  color1
]
(0+\N-1,1+\N-1,0) -- (1+\N-1,1+\N-1,0) -- (1+\N-1,1+\N-1,\elll) -- (0+\N-1,1+\N-1,\elll) -- cycle;
\draw[
  color1
]
(1+\N-1,0+\N-1,0) -- (1+\N-1,0+\N-1,\elll) -- (1+\N-1,1+\N-1,\elll) -- (1+\N-1,1+\N-1,0) -- cycle;

\node[color1] at (\N,\N-0.5,\elll/2) {$\Omega_1$};
\node[color2] at (\N,1,-0.5) {$\Omega_L$};

\draw[
  >=latex,
  <->
] (-\eps,\N+0.5,0) -- ++(0,0,\elll) node[anchor=west, midway]{$\ell$};
\draw[] (-\eps,\N+0.5-0.25,\elll) -- ++(0,0.5,0);
\draw[] (-\eps,\N+0.5-0.25,0) -- ++(0,0.5,0);
\draw[
  >=latex,
  <->
] (-\eps,-0.5-\eps,\elll) -- ++(\N+\eps,0,0) node[anchor=south east, midway]{$L$};
\draw[] (-\eps,-0.5-\eps-0.25,\elll) -- ++(0,0.5,0);
\draw[] (\N,-0.5-\eps-0.25,\elll) -- ++(0,0.5,0);
\draw[
  >=latex,
  <->
] (-0.5-\eps,-\eps,\elll) -- ++(0,\N+\eps,0) node[anchor=south, midway]{$L$};
\draw[] (-0.5-\eps-0.25,-\eps,\elll) -- ++(0.5,0,0);
\draw[] (-0.5-\eps-0.25,\N,\elll) -- ++(0.5,0,0);

\draw[->]
  (\N,\N,\elll) -- ++ (-\N-1.0-\eps,0,0) node[anchor=south]{$x_1$};
\draw[->]
  (\N,\N,\elll) -- ++(0,-\N-1.0-\eps,0) node[anchor=north]{$x_2$};
\draw[->]
  (\N,\N,\elll) -- ++(0,0,-\elll-1.0) node[anchor=west]{$y_1$};

\draw[
  thick,
  >=latex,
  <-,
] (\N-2.5,\N,\elll/2) to ++(-0.5,1,-\elll/2);
\draw[
  thick,
  >=latex,
  <-,
] (\N-3.5,\N,\elll/2) to ++(0.5,1,-\elll/2);
\node[
  below,
  text width=2.0cm,
  align=center
] at (\N-3.0,\N+1,0)  {$V(\boldsymbol{x},\boldsymbol{y}) = V(\boldsymbol{x}+\boldsymbol{i},\boldsymbol{y})$};

\end{tikzpicture}

  }%
  \hfill%
  \subfloat[%
    Iteration numbers for different shifts \(\sigma\).\label{sfig:iterations-numbers-laplace-evp}%
  ]{%
    \newcommand{\datapath}{./plots/iteration-numbers-laplace-inner-outer}%



\begin{tikzpicture}

  \definecolor{color1}{rgb}{0     ,0.4470,0.7410}
  \definecolor{color2}{rgb}{0.8500,0.3250,0.0980}
  \definecolor{color3}{rgb}{0.9290,0.6940,0.1250}
  \definecolor{color4}{rgb}{0.4940,0.1840,0.5560}
  \definecolor{color5}{rgb}{0.4660,0.6740,0.1880}
  \definecolor{color6}{rgb}{0.3010,0.7450,0.9330}
  \definecolor{color7}{rgb}{0.6350,0.0780,0.1840}
  \pgfplotscreateplotcyclelist{matlab}{
    color1,every mark/.append style={solid},mark=*\\
    color2,every mark/.append style={solid},mark=square*\\
    color3,every mark/.append style={solid},mark=triangle*\\
    color4,every mark/.append style={solid},mark=halfsquare*\\
    color5,every mark/.append style={solid},mark=pentagon*\\
    color6,every mark/.append style={solid},mark=halfcircle*\\
    color7,every mark/.append style={solid,rotate=180},mark=halfdiamond*\\
    color1,every mark/.append style={solid},mark=diamond*\\
    color2,every mark/.append style={solid},mark=halfsquare right*\\
    color3,every mark/.append style={solid},mark=halfsquare left*\\
  }

  \readdef{\datapath/limit.csv}{\limit}

  \begin{axis}[
      name=myaxis,
      cycle list name=matlab,
      xlabel={shift $\sigma$},
      xmin=0,
      grid=major,
      mark size = 1.5pt,
      mark repeat = 20,
      mark=none,
      height=4.8cm,
      width=5.0cm,
      ymax=330,
      xmax=\limit,
      xtick={0,2,4,6,8},
      extra x ticks={\limit},
      extra x tick labels={$\lambda_{\infty}$},
      extra tick style={grid=none, grid style={dotted, cyan}},
      legend to name=legend_motivation,
      legend style={
        font=\footnotesize,
        outer sep = 0pt,
      },
      legend columns=1,
    ]

    \addplot+[
      densely dotted,
      forget plot,
    ] table [
        x index=0,
        y index=3,
        col sep=comma,
      ] {\datapath/datar8.csv};
    \addplot+[
      densely dotted,
      forget plot,
    ] table [
        x index=0,
        y index=3,
        col sep=comma,
      ] {\datapath/datar9.csv};
    \addplot+[
      densely dotted,
      forget plot,
    ] table [
        x index=0,
        y index=3,
        col sep=comma,
      ] {\datapath/datar10.csv};
    \addplot+[
      densely dotted,
      forget plot,
    ] table [
        x index=0,
        y index=3,
        col sep=comma,
      ] {\datapath/datar11.csv};
    \addplot+[
      densely dotted,
      forget plot,
    ] table [
        x index=0,
        y index=3,
        col sep=comma,
      ] {\datapath/datar12.csv};
    \addplot+[
      densely dotted,
      forget plot,
    ] table [
        x index=0,
        y index=3,
        col sep=comma,
      ] {\datapath/datar13.csv};
    \addplot+[
      densely dotted,
      forget plot,
    ] table [
        x index=0,
        y index=3,
        col sep=comma,
      ] {\datapath/datar14.csv};
    \addplot+[
      densely dotted,
      forget plot,
    ] table [
        x index=0,
        y index=3,
        col sep=comma,
      ] {\datapath/datar15.csv};
    \addplot+[
      densely dotted,
      forget plot,
    ] table [
        x index=0,
        y index=3,
        col sep=comma,
      ] {\datapath/datar16.csv};

    \addplot+[
    ] table [
        x index=0,
        y index=3,
        col sep=comma,
      ] {\datapath/datar4.csv};
    \addplot+[
    ] table [
        x index=0,
        y index=2,
        col sep=comma,
      ] {\datapath/datar4.csv};
    \addplot+[
    ] table [
        x index=0,
        y index=1,
        col sep=comma,
      ] {\datapath/datar4.csv};
    \addplot+[ 
      domain=1:1.001,
      solid,
      color=black,
      mark=none
    ]{x};
    \addplot+[ 
      domain=1:1.001,
      densely dashed,
      color=black,
      mark=none
    ]{x};
    \addplot+[ 
      domain=1:1.001,
      dashed,
      color=black,
      mark=none
    ]{x};
    \addplot+[ 
      domain=1:1.001,
      loosely dashed,
      color=black,
      mark=none
    ]{x};
    \addplot+[ 
      domain=1:1.001,
      densely dotted,
      color=black,
      mark=none
    ]{x};

    \pgfplotsset{cycle list shift=-8}
    \addplot+[
      densely dashed,
    ] table [
        x index=0,
        y index=3,
        col sep=comma,
      ] {\datapath/datar5.csv};
    \addplot+[
      densely dashed,
    ] table [
        x index=0,
        y index=2,
        col sep=comma,
      ] {\datapath/datar5.csv};
    \addplot+[
      densely dashed,
    ] table [
        x index=0,
        y index=1,
        col sep=comma,
      ] {\datapath/datar5.csv};

    \pgfplotsset{cycle list shift=-11}
    \addplot+[
      dashed,
    ] table [
        x index=0,
        y index=3,
        col sep=comma,
      ] {\datapath/datar6.csv};
    \addplot+[
      dashed,
    ] table [
        x index=0,
        y index=2,
        col sep=comma,
      ] {\datapath/datar6.csv};
    \addplot+[
      dashed,
    ] table [
        x index=0,
        y index=1,
        col sep=comma,
      ] {\datapath/datar6.csv};

    \pgfplotsset{cycle list shift=-14}
    \addplot+[
      loosely dashed,
    ] table [
        x index=0,
        y index=3,
        col sep=comma,
      ] {\datapath/datar7.csv};
    \addplot+[
      loosely dashed,
    ] table [
        x index=0,
        y index=2,
        col sep=comma,
      ] {\datapath/datar7.csv};
    \addplot+[
      loosely dashed,
    ] table [
        x index=0,
        y index=1,
        col sep=comma,
      ] {\datapath/datar7.csv};

    \legend{$\tfrac{1}{12}n_{\textnormal{tot}}$,$n_{\textnormal{CG}}$,$n_{\textnormal{IPM}}$,$L=4$,$L=5$,$L=6$,$L=7$,$L\ge8$}


  \end{axis}

  \coordinate (c3) at ($(myaxis.east)$);
  \node[right] at ($(myaxis.east)-(-0.0,0.05)$){
    \pgfplotslegendfromname{legend_motivation}
  };



\end{tikzpicture}

  }
  \caption{%
    \textnormal{\textbf{(a)}} Geometric setup of \(\Omega_L\) with \(p=2\) expanding and \(q=1\) fixed directions with dimensions \(L=5.5,\ell=2\). \textnormal{\textbf{(b)}} Iteration number estimates for an inner-outer eigenvalue algorithm using IPM/CG for the Laplacian EVP on \((0,L)\times(0,1)\) using finite differences (\(h=1/10\)) and different shifts \(\sigma\). Note that the arbitrary scaling of \(n_{\textnormal{tot}}\) is only applied for better visualization.
  }%
\end{figure}%
Thus, the shifting dilemma prevents us from using the QOSI strategy with standard inner solvers without special treatment, and we need to provide something better \textendash\ another preconditioner for the inner CG solver. In our discussion, we intentionally omitted the consideration of the case where \(h \to 0\). Including this scenario would obscure the primary insight that shifting is why we require an alternative preconditioner in this context, in contrast to the classical case of a fixed domain and finer meshes.

\subsection{State-of-the-art and context}\label{ssec:state-of-the-art}
First, the Schrödinger equation \cref{eq:schroedingerEquation} is fundamental in quantum mechanics for describing wave functions \(\phi\) in systems influenced by a potential \(V\). Although suitable for simple systems, its importance extends to complex simulations in computational chemistry. Usual methods like self-consistent field (SCF) iterations~\cite{roothaanNewDevelopmentsMolecular1951, cancesDensityFunctionalTheory2023} rely on solving linear eigenvalue problems, even in nonlinear contexts. One applicable example is the Gross-Pitaevski equation (GPE) for Bose--Einstein condensates, focusing on minimal energy ground states. In contrast to the SCF, direct minimization methods aim to directly obtain ground-state solutions by minimizing the energy. Such methods are, e.g., based on gradient flow~\cite{heidGradientFlowFinite2021, dorichErrorBoundsDiscrete2023, henningSobolevGradientFlow2020}, manifold optimization~\cite{altmannEnergyadaptiveRiemannianOptimization2022, altmannRiemannianNewtonMethods2023, cancesConvergenceAnalysisDirect2021}, preconditioned CG~\cite{gaidamourBEC2HPCHPCSpectral2021}, or other variations of linearized inverse iterations~\cite{altmannJmethodGrossPitaevskiiEigenvalue2021,henningDependencySpectralGaps2023}. Common to these methods is the need to repeatedly solve systems similar to \cref{eq:abstract-shifted-linear-system} \textendash\ either for the update step or to apply preconditioners.
\par
Second, while initially seeming artificial, our anisotropic and periodic geometry is essential for observing exceptional phenomena. One example is the carbon allotope hierarchy. This hierarchy progresses from \emph{0d} fullerene through \emph{1d} carbon nanowires~\cite{zhaoCarbonNanowireMade2003} and nanotubes~\cite{barraultDomainDecompositionElectronic2008}, leading to Nobel-winning research on \emph{2d} graphene~\cite{gerstnerNobelPrize20102010}. Recent advances include \emph{2.5d} materials with Moiré superlattices~\cite{cancesSimpleDerivationMoirescale2023} in twisted bilayer graphene~\cite{caoUnconventionalSuperconductivityMagicangle2018} and the exploration of dimensions above \(d>3\) in the framework of time crystals~\cite{wilczekQuantumTimeCrystals2012} (\(p=3,q=1\) in \cref{eq:geometry-description}). Studying these crystalline structures~\cite{cancesModifiedOperatorMethodCalculation2022,herbstDFTKJulianApproach2021} and searching for promising ones via high-throughput simulations~\cite{mounetTwodimensionalMaterialsHighthroughput2018,herbstRobustEfficientLine2022} is, therefore, increasingly important.
\par
Third, the considered geometries of \cref{eq:geometry-description} naturally harmonize very well with the DD method. In fact, for elliptic source problems, the classical Schwarz method is weakly scalable and does not require coarse correction~\cite{cancesDomainDecompositionImplicit2013} on anisotropic domains with Dirichlet boundaries. This remarkable fact was studied in~\cite{ciaramellaAnalysisParallelSchwarz2017,ciaramellaAnalysisParallelSchwarz2018,ciaramellaAnalysisParallelSchwarz2018a} and further extended in~\cite{ciaramellaScalabilityParallelSchwarz2020,ciaramellaScalabilitySchwarzMethod2020,hassanMathematicalAnalysisBoundary2020,reuskenAnalysisSchwarzDomain2021}, directly inspiring this work with the simple question: What happens to the DD algorithms when we replace the linear problem with an eigenvalue problem for the same operator? \emph{It changes a lot} \textendash\ is the short answer, which we elaborate on in this article.
Using DD in eigenvalue computations has been previously explored through various approaches like MDD~\cite{bencteuxDomainDecompositionElectronic2008a,bencteuxAnalysisQuadraticProgramming2010}, AMLS~\cite{bennighofAutomatedMultilevelSubstructuring2004}, DAC~\cite{chenAnalysisDivideandconquerMethod2016}, and others~\cite{luiRecentResultsDomain1996,luiDomainDecompositionMethods2000,mathewDomainDecompositionMethods2008,xuNovelDomainDecomposition2020,kalantzisDomainDecompositionApproaches2018,doleanNonlinearPreconditioningHow2016}. Our method adopts the inner-outer paradigm, since we already have access to a quasi-optimal eigensolver from~\cite{stammQuasiOptimalFactorizationPreconditioner2022} and use DD for solving the shifted linear systems, revealing an interesting connection between eigenvalue solver preconditioning and linear solver coarse space construction \textendash\ a topic not widely discussed in existing literature.
\par
Coarse spaces for overlapping Schwarz methods are, thus, the last to be mentioned. Spectral coarse spaces, in particular, usually deal with high-contrast diffusion problems by using local cell problems in the volume~\cite{galvisDomainDecompositionPreconditioners2010a,galvisDomainDecompositionPreconditioners2010b,bastianMultilevelSpectralDomain2022}, in the overlap~\cite{spillaneAbstractRobustCoarse2014,spillaneRobustDomainDecomposition2014}, or based on DtN maps~\cite{doleanAnalysisTwolevelSchwarz2012} to construct efficient coarse basis functions based on spectral problems. Recent developments try to extend these approaches to the algebraic case~\cite{gouarinFullyAlgebraicDomain2021,aldaasEfficientAlgebraicTwoLevel2023,aldaasRobustAlgebraicMultilevel2022} and establish the connection to multiscale methods in general~\cite{chupengWavenumberExplicitConvergence2023,heinleinMultiscaleCoarseSpaces2018,heinleinAdaptiveGDSWCoarse2019}. Thus, multiscale methods are naturally related and have been used successively for the GPE~\cite{henningTwoLevelDiscretizationTechniques2014,henningOptimalConvergenceRates2023,peterseimSuperlocalisedWaveFunction2023}.

\subsection{Outline}\label{ssec:outline}
In the following \cref{sec:dd-for-eigen}, we present our two-level algorithm after introducing inner-outer eigenvalue algorithms in \cref{ssec:inner-outer-eigen-algorithms}, their usage with two-level domain decomposition methods in \cref{ssec:two-level-dd}, and the definition of the new coarse space in \cref{ssec:coarse-space}.
After illustrating the connection to spectral coarse spaces, we analyze the coarse space and provide a condition number bound in \cref{sec:analysis}. Finally, \cref{sec:numericalExperiments} presents various numerical examples to test the method with full generality.

\section{Domain decomposition for eigenvalue algorithms}\label{sec:dd-for-eigen}
For the numerical solution of the Schrödinger equation~\eqref{eq:schroedingerEquation}, we first apply a classical Galerkin finite element scheme. Let \(\mathcal{T}_h(\Omega_1)\) be a conforming and shape-regular partition of the unit cell \(\Omega_1\) into finite elements \(\tau \in \mathcal{T}_h(\Omega_1)\) and \(\mathcal{T}_h\) the partition of \(\Omega_L\) using translated copies of \(\mathcal{T}_h(\Omega_1)\), where \(h := \max_{\tau \in \mathcal{T}_h} \operatorname{diam}{\tau}\)~\cite[p150]{brennerMathematicalTheoryFinite1994}. We then define the finite element subspace \(V_h(\Omega_L) \subset H_0^1(\Omega_L), |V_h(\Omega_L)|=n\), that consists of piecewise polynomials with total degree \(r\) from the space of polynomials \(\mathcal{P}_r\) as \(V_h(\Omega_L) := \left\{u \in H_0^1(\Omega_L) \, \middle|\, u|_\tau \in \mathcal{P}_r \, \forall \tau \in \mathcal{T}_h\right\}\). The resulting discrete problem then searches for the function \(\phi_h \in V_h(\Omega_L) \setminus \{0\}\) and the value \(\lambda_h \in \mathbb{R}\) such that
\begin{equation}\label{eq:discrete-weak-form}
  \forall v_h \in V_h(\Omega_L):
  \quad
  \int_{\Omega_L} \nabla \phi_h \cdot \nabla v_h \dd \boldsymbol{z}
  +
  \int_{\Omega_L} V \phi_h v_h \dd \boldsymbol{z}
  =
  \lambda_h
  \int_{\Omega_L} \phi_h v_h \dd \boldsymbol{z}
  .
\end{equation}
For the finite element basis of \(V_h(\Omega_L)\), denoted by \({\{N_i\}}_{i=1}^{n}\), we have \(\phi_h = \sum_{i=1}^n x_i N_i\) where we collect all coefficients in the vector \(\boldsymbol{x}_h \in \mathbb{R}^n\). Thus, \cref{eq:discrete-weak-form} is equivalent to the algebraic generalized eigenvalue problem
\begin{equation}\label{eq-algebraic-eigenvalue-problem}
  \boldsymbol{A} \boldsymbol{x}_h = \lambda_h \boldsymbol{B} \boldsymbol{x}_h
  ,
\end{equation}
in which \(\boldsymbol{A} \in \mathbb{R}^{n \times n}\) is the sum of the usual Laplacian stiffness matrix and the contribution from the potential \(V\), while \(\boldsymbol{B} \in \mathbb{R}^{n \times n}\) is the usual \(L^2\)-mass matrix.
In order to solve the GEVP~\eqref{eq-algebraic-eigenvalue-problem} for the lowest eigenpair \((\boldsymbol{x}_h^{(1)},\lambda_h^{(1)})\), we can employ an inverse power method. This algorithm, starting from an initial guess of \(\boldsymbol{x}_0 \in \mathbb{R}^n\), successively applies the inverse of the operator as \(\boldsymbol{x}_{k+1} = {\boldsymbol{A}}^{-1} \boldsymbol{B} \boldsymbol{x}_{k}\) and \(\boldsymbol{B}\)-normalizes the result. As motivated in \cref{sec:motivation} for the Laplacian EVP and proved in~\cite{stammQuasiOptimalFactorizationPreconditioner2022} for the Schrödinger EVP with \cref{ass:pot-dir-periodic,ass:pot-ess-bounded}, we need to apply shifting to obtain robustness for the domain size \(L\). 
This preconditioning is archived by calculating the quasi-optimal shift \(\sigma\) as the first eigenvalue of the \emph{limit cell problem}~\eqref{eq:cell-problem} posed on the unit cell \(\Omega_1 \subset \Omega_L\): Find \((\psi_h,\sigma) \in (V_{h,\#}(\Omega_1) \setminus \{0\}) \times \mathbb{R}\) such that
\begin{equation}\label{eq:weak-form-cell-problem}
  \forall v_h \in V_{h,\#}(\Omega_1):
  \quad
  \int_{\Omega_1} \nabla \psi_h \cdot \nabla v_h \dd \boldsymbol{z}
  +
  \int_{\Omega_1} V \psi_h v_h \dd \boldsymbol{z}
  =
  \sigma
  \int_{\Omega_1} \psi_h v_h \dd \boldsymbol{z}
  ,
\end{equation}
in which, for any period-aligned box \(\Omega_{\#} \in \mathcal{B}_{\#} := \{\bigtimes_{i=1}^p ( a_i,b_i) \times {(0, \ell)}^{q} \,|\, a_i,b_i \in \mathbb{Z}, 0 \le a_i < b_i \le L\}\), \(V_{h,\#}(\Omega_{\#}) := \big\{u \in H^1(\Omega_{\#}) \, \big|\, u|_\tau \in \mathcal{P}_r \, \forall \tau \in \mathcal{T}_h(\Omega_{\#}), B_{\#}[u]=0 \textnormal{ on } (\partial \Omega_{\#})_{\boldsymbol{x}}, B_{\textnormal{D}}[u]=0 \textnormal{ on } (\partial \Omega_{\#})_{\boldsymbol{y}}\big\}\), with \(\mathcal{T}_h(\Omega_{\#}) = \{\tau \in \mathcal{T}_h \, |\, \tau \subset \bar{\Omega}_{\#}\}\), is the finite element space with periodic condition \(B_{\#}[u](\boldsymbol{z}) = u(\boldsymbol{x},\boldsymbol{y}) - u((x_i - \tilde{n}_i(\boldsymbol{z})(b_i-a_i) )_{i=1}^p, \boldsymbol{y})\) on \((\partial \Omega_{\#})_{\boldsymbol{x}} = \bigtimes_{i=1}^p \{a_i,b_i\} \times (0,\ell)^q\) (with unit normal-vector \(\tilde{\boldsymbol{n}}\)) and zero condition \(B_{\textnormal{D}}[u](\boldsymbol{z}) = u(\boldsymbol{z})\) on \((\partial \Omega_{\#})_{\boldsymbol{y}} = \bigtimes_{i=1}^p (a_i,b_i) \times \{0,\ell\}^q\).  
Let \(\tilde{\boldsymbol{\psi}}\) be the vector of coefficients of \(\psi_h\) and \(\boldsymbol{\psi} \in \mathbb{R}^n\) its periodic extension to \(\Omega_L\). Then, using the shifted operator \(\boldsymbol{A}_{\sigma} = \boldsymbol{A} - \sigma \boldsymbol{B}\) in the IPM results in convergence rates independently of \(L\), as shown in~\cite{stammQuasiOptimalFactorizationPreconditioner2022}.  
\par
Another class of eigensolvers interprets the EVP as an energy minimization problem that tries to minimize the Rayleigh quotient \(R_{\boldsymbol{A},\boldsymbol{B}}(\boldsymbol{x}) := (\boldsymbol{x}^T \boldsymbol{A} \boldsymbol{x})/(\boldsymbol{x}^T \boldsymbol{B} \boldsymbol{x})\) since
\begin{equation}
  \lambda_h^{(1)} = \min_{
    \substack{S \le \mathbb{R}^n \\ |S| = 1}
  }
  \max_{
    \substack{\boldsymbol{x} \in S \\ \boldsymbol{x} \neq \boldsymbol{0}}
  }
  R_{\boldsymbol{A},\boldsymbol{B}}(\boldsymbol{x})
  .
\end{equation}
Taking the gradient, \(\nabla R_{\boldsymbol{A},\boldsymbol{B}}(\boldsymbol{x}) = \tfrac{2}{\boldsymbol{x}^T \boldsymbol{B} \boldsymbol{x}} (\boldsymbol{A}\boldsymbol{x} - R_{\boldsymbol{A},\boldsymbol{B}}(\boldsymbol{x}) \boldsymbol{B} \boldsymbol{x})\), allows us to formulate the steepest descent method as
\begin{equation}
  \boldsymbol{x}_{k+1} = \boldsymbol{x}_{k} - \tau_k \nabla R_{\boldsymbol{A},\boldsymbol{B}}(\boldsymbol{x}_k)
  ,
\end{equation}
where the (adaptive) stepsize \(\tau_k \in \mathbb{R}\) needs to be determined. Moving the scalar factor \(2/(\boldsymbol{x}^T \boldsymbol{B} \boldsymbol{x})\) from \(\nabla R_{\boldsymbol{A},\boldsymbol{B}}\) into \(\tau_k\), one can observe that the descent direction in the \(k\)-th step is proportional to the \emph{spectral residual} \(\boldsymbol{r}_k(\boldsymbol{x}) := \boldsymbol{A}\boldsymbol{x}_k - R_{\boldsymbol{A},\boldsymbol{B}}(\boldsymbol{x}_k) \boldsymbol{B} \boldsymbol{x}_k\). To improve the convergence rate~\cite{baiTemplatesSolutionAlgebraic2000}, we can also apply a preconditioner \(\boldsymbol{P} \in \mathbb{R}^{n \times n}\) and use \(\boldsymbol{w}_k = \boldsymbol{P} \boldsymbol{r}_k\) as the preconditioned search direction. Using the common choice of \(\boldsymbol{P} = \boldsymbol{A}^{-1}\) and \(\tau_k=1\), the iteration \(\boldsymbol{x}_{k+1} = \boldsymbol{x}_{k} - \tau_k \boldsymbol{A}^{-1} \boldsymbol{r}_k\) reduces, up to normalization, back to the IPM\@. One might apply line search methods to find the optimal \(\tau_k\) in each step. However, a common strategy is to use the Rayleigh--Ritz procedure~\cite[p39]{baiTemplatesSolutionAlgebraic2000} within the space \(\tilde{S}_k := \operatorname{span}\{\boldsymbol{w}_k,\boldsymbol{x}_k\}\) by solving a two-dimensional eigenvalue problem. The LOPCG~\cite{knyazevPreconditionedConjugateGradient1991} method adds another previous iterate \(\boldsymbol{x}_{k-1}\) into the search space \(S_k := \operatorname{span}\{\boldsymbol{w}_k,\boldsymbol{x}_{k},\boldsymbol{x}_{k-1}\}\) and finds the element with minimal \(R_{\boldsymbol{A},\boldsymbol{B}}\) per iteration within the whole subspace \(S_k\). As for the IPM, we also use the quasi-optimal shift-and-invert (SI) preconditioner \(\boldsymbol{P} := \boldsymbol{A}_{\sigma}^{-1} = {(\boldsymbol{A} - \sigma \boldsymbol{B})}^{-1}\). This SI-preconditioner will also work for other gradient-based eigenvalue solvers.
\par
However, applying the eigen-preconditioner \(\boldsymbol{P}\) in practice amounts to solving a large and sparse linear system, which requires using iterative, inexact methods when \(n\) is large.

\subsection{Inexact inner-outer eigenvalue algorithms}\label{ssec:inner-outer-eigen-algorithms}
Since we deal with PDE-based problems on meshes, applying the eigenvalue preconditioner for \(\boldsymbol{w}_k = \boldsymbol{A}_{\sigma}^{-1} \boldsymbol{r}\) uses an iterative method. This strategy leads to an \emph{inner-outer}~\cite{freitagInnerouterIterativeMethods2007,freitagConvergenceInexactInverse2007} eigenvalue solver since inner iterations are needed to solve the linear systems for each outer eigenvalue iteration inexactly. As \(\boldsymbol{A}_{\sigma}\) is symmetric positive-definite, the CG method is the preferred method. Defining an outer and an (adaptive) inner tolerance \(0 < \texttt{TOL}_{\textnormal{o}},\texttt{TOL}_{\textnormal{i},k} \in \mathbb{R}\), the \emph{inexact} SI-LOPCG (\cref{alg:si-lopcg}) results. As the algorithm is a generalization of the plain inexact IPM, a sufficiently small constant \(\texttt{TOL}_{\textnormal{i},k}\) ensures convergence~\cite[Cor.~3.2]{freitagConvergenceInexactInverse2007} with the usual assumptions.
Moreover, \(\texttt{TOL}_{\textnormal{i},k}\) can also be relative to the first inner residual norm or depend on the outer residual.
\par
The inner-outer paradigm is related to Newton--Krylov-type methods~\cite{klawonnExtremelyScalableNonlinear2015} for nonlinear equations. Applied to the EVP case, this would lead to the cubically convergent Rayleigh quotient iteration (RQI)~\cite{tapiaInverseShiftedInverse2018}, where convergence to the first eigenpair is not generally guaranteed~\cite[p53]{baiTemplatesSolutionAlgebraic2000}. Therefore, we stick with first-order methods.  
\begin{algorithm}[t]
  \caption{Inexact SI-Preconditioned LOPCG}\label{alg:si-lopcg}
  \begin{algorithmic}[1]
      \Require{%
        \(\boldsymbol{A},\boldsymbol{B} \in \mathbb{R}^{n \times n}\); \(\texttt{TOL}_{\textnormal{i},k},\texttt{TOL}_{\textnormal{o}} > 0\); \(k_{\max} \in \mathbb{N}\); vectors \(\boldsymbol{x}_{0},\boldsymbol{x}_{1} \in \mathbb{R}^n\); shift \(\sigma \in \mathbb{R}\)%
      }
      \Function{%
        SI-LOPCG%
      }{}
      \State{%
        Initialize \(k:=1\), \(\boldsymbol{r}_{\textnormal{o},k} := \boldsymbol{A} \boldsymbol{x}_k - R_{\boldsymbol{A},\boldsymbol{B}}(\boldsymbol{x}_k) \boldsymbol{B} \boldsymbol{x}_k\), \(\boldsymbol{A}_{\sigma} := \boldsymbol{A} - \sigma \boldsymbol{B}\)%
      }
      \While{%
        \({\|\boldsymbol{r}_{\textnormal{o},k}\|}_2 \le \texttt{TOL}_{\textnormal{o}}\) and \(k \le k_{\max}\)%
      }
      \Comment{%
        outer loop%
      }
      \State{%
        Solve \(\boldsymbol{A}_{\sigma} \boldsymbol{w}_k = \boldsymbol{r}_{\textnormal{o},k}\) inexactly s.t.\ \({\|\boldsymbol{A}_{\sigma} \boldsymbol{w}_k - \boldsymbol{r}_{\textnormal{o},k}\|}_2 \le \texttt{TOL}_{\textnormal{i},k}\)%
      }\label{algstep:inner-loop}
      \Comment{%
        inner loop%
      }
      \State{%
        \(S_k := \operatorname{span} \{\boldsymbol{w}_{k},\boldsymbol{x}_{k},\boldsymbol{x}_{k-1}\}\)%
      }
      \State{%
        \(\tilde{\boldsymbol{x}}_{k+1} := \argmin_{\boldsymbol{y} \in S_k \setminus \{\boldsymbol{0}\}} R_{\boldsymbol{A},\boldsymbol{B}}(\boldsymbol{y})\) %
      }
      \Comment{%
        via orthogonalized Rayleigh--Ritz%
      }%
      \State{%
        Normalize \(\boldsymbol{x}_{k+1} := \tilde{\boldsymbol{x}}_{k+1} / (\tilde{\boldsymbol{x}}_{k+1}^T \boldsymbol{B} \tilde{\boldsymbol{x}}_{k+1})^{1/2}\), set \(k := k+1\), update \(\boldsymbol{r}_{\textnormal{o},k}\)
      }
      \EndWhile{}
      \State{%
        \(\lambda_{k} := R_{\boldsymbol{A},\boldsymbol{B}}(\boldsymbol{x}_{k})\)%
      }
      \State{%
        \textbf{return} eigenpair approximation \((\lambda_{k},\boldsymbol{x}_{k})\)%
      }
    \EndFunction{}
  \end{algorithmic}
\end{algorithm}

\subsection{Two-level domain decomposition}\label{ssec:two-level-dd}
In the inner linear system loop (line~\ref{algstep:inner-loop} of \cref{alg:si-lopcg}), a system of the form \(\boldsymbol{A} \boldsymbol{x} = \boldsymbol{b}\) with a sparse matrix \(\boldsymbol{A} \in \mathbb{R}^{n \times n}\) and a given vector \(\boldsymbol{b} \in \mathbb{R}^n\) (\(\boldsymbol{A} \mapsto \boldsymbol{A}_{\sigma}, \boldsymbol{b} \mapsto \boldsymbol{r}_{\textnormal{o},k}\) in our specific case) must be solved. Let \(\mathcal{N}\) be the set of degrees of freedom, i.e., \(|\mathcal{N}|=n\), and define an (overlapping) decomposition into \(N \ll n\) subsets, \(\{\mathcal{N}_i\}_{i=1}^N\), such that \(\mathcal{N} = \cup_{i=1}^N \mathcal{N}_i\).  
This algebraic decomposition represents the geometric domain decomposition of \(\Omega_L\) into subdomains. For all \(\mathcal{N}_i\), there are associated \emph{restriction} matrices \(\boldsymbol{R}_i \in \{0,1\}^{|\mathcal{N}_i| \times |\mathcal{N}|}\), such that \(\boldsymbol{R}_i \boldsymbol{x}\) restricts \(\boldsymbol{x}\) to the subdomain \(\mathcal{N}_i\). The transpose \(\boldsymbol{R}_i^T \in \{0,1\}^{|\mathcal{N}| \times |\mathcal{N}_i|}\) is called the \emph{extension} matrix. For the overlapping case with \(\mathcal{N}_i \cap \mathcal{N}_j \neq \emptyset\) for some \(i,j\), we can also define the diagonal \emph{partition of unity} (PU) matrices \(\boldsymbol{D}_i \in \mathbb{R}^{|\mathcal{N}_i| \times |\mathcal{N}_i|}\), such that \(\boldsymbol{x} = \sum_{i=1}^N \boldsymbol{R}_i^T \boldsymbol{D}_i \boldsymbol{R}_i \boldsymbol{x}\) for all \(\boldsymbol{x}\) holds. An easy choice~\cite[p12]{doleanIntroductionDomainDecomposition2015} is to set \({(\boldsymbol{D}_i)}_{kk} := 1/|\mathcal{M}_k|\) where \(\mathcal{M}_k := \{1 \le i \le N | k \in \mathcal{N}_i\}\) is the set of subdomains that contain the degree of freedom \(k\).  
\par
Then, we can define the one-level \emph{Additive Schwarz}~\cite{dryjaAdditiveVariantSchwarz1987} (AS) and the \emph{restricted Additive Schwarz}~\cite{caiRestrictedAdditiveSchwarz1999} (RAS) preconditioners as
\begin{equation}\label{eq:dd-preconditioners}
  \boldsymbol{M}_{\textnormal{AS},1}^{-1} = \sum_{i=1}^{N} \boldsymbol{R}_i^T \boldsymbol{A}_i^{-1} \boldsymbol{R}_i^{}
  , \quad
  \boldsymbol{M}_{\textnormal{RAS},1}^{-1} = \sum_{i=1}^{N} \boldsymbol{R}_i^T \boldsymbol{D}_i^{} \boldsymbol{A}_i^{-1} \boldsymbol{R}_i^{}
  ,
\end{equation}
where \(\boldsymbol{A}_i := \boldsymbol{R}_i^{} \boldsymbol{A} \boldsymbol{R}_i^T \in \mathbb{R}^{|\mathcal{N}_i| \times |\mathcal{N}_i|}\) are the subdomain coefficient matrices. We can formulate the \emph{stationary RAS method} (i.e., a preconditioned fixed point iteration~\cite[p13]{doleanIntroductionDomainDecomposition2015}) to solve the linear system directly using
\begin{equation}\label{eq:stationary-ras}
  \boldsymbol{x}_{k+1}
  =
  \boldsymbol{x}_k + \boldsymbol{M}^{-1}_{\textnormal{RAS},1} \boldsymbol{r}_{k}
  , \quad
  \boldsymbol{r}_k := \boldsymbol{b} - \boldsymbol{A} \boldsymbol{x}_k
  .
\end{equation}
Using \cref{eq:stationary-ras} for the QOSI-preconditioned inner systems would not result in a scalable method since the condition number of \(\boldsymbol{M}^{-1}_{\textnormal{RAS},1} \boldsymbol{A}_{\sigma}\) will explode for \(L \to \infty\). Thus, we will propose now a two-level modification to the AS/RAS-preconditioners that allows for full \(L\)-robustness. In general, the second level is usually incorporated via a coarse subspace \(S_0 \le \mathbb{R}^n\) with dimension \(0 \le n_0 \ll n\) that has an associated matrix \(\boldsymbol{R}_0^T \in \mathbb{R}^{n \times n_0}\) (which we will specify in \cref{ssec:coarse-space}) such that the columns of \(\boldsymbol{R}_0^T\) are linearly independent and span \(S_0\). We can then define \(\boldsymbol{A}_0 := \boldsymbol{R}_0^{} \boldsymbol{A} \boldsymbol{R}_0^T\) in the usual notation and include this \emph{coarse correction} into both preconditioners from \cref{eq:dd-preconditioners} in an additive fashion as
\begin{equation}\label{eq:two-level-preconditioners}
  \boldsymbol{M}_{\star,2}^{-1} :=  \boldsymbol{R}_0^T \boldsymbol{A}_0^{-1} \boldsymbol{R}_0^{} + \boldsymbol{M}_{\star,1}^{-1}
  ,
\end{equation}
where \(\star\) stands for either AS or RAS\@. The coarse level can also be included in the stationary RAS iteration in a multiplicative fashion~\cite[p302]{ganderCompleteOptimalOptimized2018a}\ as
\begin{equation}\label{eq:two-level-stationary-ras-mult-cc}
  \begin{aligned}
  \boldsymbol{x}_{k+1/2}
  =
  \boldsymbol{x}_k + \boldsymbol{M}_{\textnormal{RAS},1}^{-1} \boldsymbol{r}_k
  ,
  \quad
  \boldsymbol{x}_{k+1}
  =
  \boldsymbol{x}_{k+1/2} + \boldsymbol{R}_0^T \boldsymbol{A}_0^{-1} \boldsymbol{R}_0^{} \boldsymbol{r}_{k+1/2}
  .
  \end{aligned}
\end{equation}
However, using the preconditioners directly in Krylov solvers is beneficial~\cite[p125]{ciaramellaIterativeMethodsPreconditioners2022}, i.e., using the symmetric \(\boldsymbol{M}_{\textnormal{AS},2}^{-1}\) for the CG and the unsymmetric \(\boldsymbol{M}_{\textnormal{RAS,2}}^{-1}\) for the GMRES method. With that description, the coarse space \(S_0\) or its basis as columns of \(\boldsymbol{R}_0^T\) remains to be specified.

\subsection{PerFact: A periodic spectral coarse space based on asymptotic factorization}\label{ssec:coarse-space}
For the classical Poisson problem, the usual motivation for coarse spaces is the lack of global information exchange~\cite[p103]{doleanIntroductionDomainDecomposition2015}. Decomposing a domain \(\Omega_L\) into several subdomains necessarily leads to the emergence of tiny inner subdomains that are very far from the boundary in the subdomain connectivity graph. These inner subdomains then need a lot of DD iterations until the boundary data is propagated.
\par
Although the initial problem~\eqref{eq:schroedingerEquation} with positive \(V\) is spectrally equivalent to Poisson's problem, the situation is different in our setup. The focus is on the geometrical expansion of \(\Omega_L\) with fixed subdomain and discretization size. So, choosing \(N \sim L\) would not increase the distance of subdomains to the boundary and would suggest no need for a coarse space using the classical argument. However, the shifting strategy changes the spectrum of \(\boldsymbol{B}^{-1} \boldsymbol{A}_{\sigma}\) by shifting it closer to the origin. This asymptotic loss of coercivity is the mechanism behind the need for a coarse space in our setup.  
\par
However, the usual rule to include \emph{slow modes} in \(V_0\) still applies, although low-energy modes are the more precise terminology. Moreover, surprisingly, these modes are already at hand. Reconsider the factorization of the ground-state solution \(\phi_L\) of \cref{eq:schroedingerEquation} into the unit cell solution \(\psi\) of \cref{eq:cell-problem} plus a remainder \(u\) as
\begin{equation}\label{eq:factorization-short}
  \phi_L = \psi \cdot u
  ,\quad
  \lambda_L = \sigma + \lambda_{u}
  \textnormal{ where } \lambda_{u} \in \mathcal{O}(1/L^2)
  ,
\end{equation}
that was shown in~\cite[Thm.~2.5]{stammQuasiOptimalFactorizationPreconditioner2022}. This decomposition is a generalization of the separation of variables in the pure Laplacian case. In \cref{eq:factorization-short}, \(\sigma\) is the asymptotic limit of the desired first eigenvalue. Equivalently spoken, the function \(\psi\) corresponding to \(\sigma\) approaches more and more an eigenfunction of the shifted \(\boldsymbol{A}_{\sigma}\) operator with zero eigenvalue \textendash\ a zero energy mode. This observation motivates us to include exactly this function in the coarse space. We define the following with the coefficient vector \(\boldsymbol{\psi} \in \mathbb{R}^n\) from the periodic unit cell problem~\eqref{eq:weak-form-cell-problem}.  
\begin{definition}\label{def:algebraic-perfact-coarse-space}%
  The matrix
  \begin{equation}
    \boldsymbol{R}_0^T :=
    \begin{bmatrix}\label{eq:algebraic-coarse-space}
      \boldsymbol{D}_1 \boldsymbol{R}_1 \boldsymbol{\psi} & 0                         & \cdots & 0                         \\
      0                         & \boldsymbol{D}_2 \boldsymbol{R}_2 \boldsymbol{\psi} & \ddots & \vdots                    \\
      \vdots                    & \ddots                    & \ddots & 0                         \\
      0                         & \cdots                    & 0      & \boldsymbol{D}_N \boldsymbol{R}_N \boldsymbol{\psi} \\
    \end{bmatrix}
    \in \mathbb{R}^{n \times N}
    ,
  \end{equation}
  defines the components of the periodic factorization (PerFact) coarse space.
\end{definition}
\begin{remark}
  The structure of \cref{eq:algebraic-coarse-space} resembles the shape of the classical Nicolaides coarse space~\cite{nicolaidesDeflationConjugateGradients1987} where the local Laplacian kernels, i.e., the constant vectors \(\boldsymbol{1}\), are replaced by their corresponding local kernels \(\boldsymbol{\psi}\) of the shifted Schrödinger operator. While the constant vectors \(\boldsymbol{1}\) might be able to capture global information exchange, we will show that the high-frequency features of \(\boldsymbol{\psi}\) must also be present for the coarse space to work. Also note that \cref{eq:algebraic-coarse-space} technically contains more basis functions than needed, as we will show in \cref{sec:analysis}, since it also includes boundary-touching domains. However, we keep this structure for simplicity and consistency with the literature~\cite[p108]{doleanIntroductionDomainDecomposition2015}.
\end{remark}
In \cref{fig:facprinciple-and-coarsespace}, we display the factorization principle \cref{eq:factorization-short} and the coarse space components \cref{eq:algebraic-coarse-space} for an exemplary decomposition into four subdomains. Therein, the main advantages become visible \textendash\ the vector \(\boldsymbol{\psi}\) is already available from the computation of the QOSI shift (i.e., problem~\cref{eq:cell-problem}) and the small dimension \(n_0 = N\) since every subdomain only needs one basis function in the coarse space in contrast to other adaptive coarse space strategies.
The PerFact coarse space is mainly used in the two-level additive Schwarz preconditioner within the CG method, but an application of the stationary two-level RAS algorithm also works, as we will see in \cref{sec:numericalExperiments}.
\begin{figure}[t]
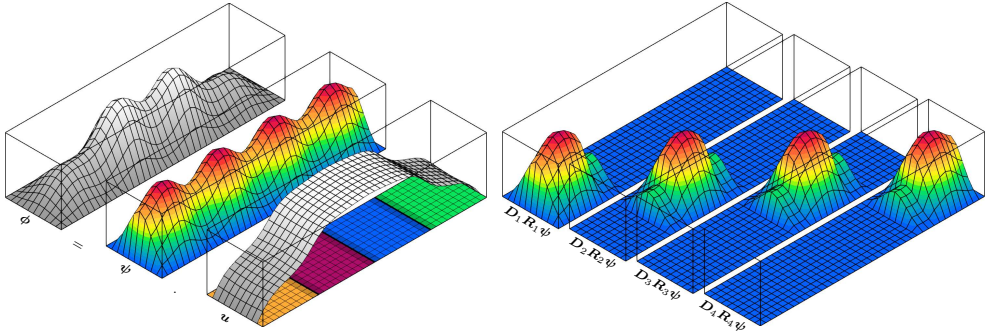

  \centering%
  \newcommand{\datapath}{./plots/coarse-space-sketch/}%
  \subfloat[%
    Visualization of the factorization principle.%
  ]{%
    \input{plots/coarse-space-sketch/factorization-ann.tex}%
  }%
  \hfill%
  \subfloat[%
    Components of the PerFact coarse space.%
  ]{%
    \input{plots/coarse-space-sketch/components-ann.tex}%
  }%
  \caption{%
    Finite element representation of the \textnormal{\textbf{(a)}} factorization principle from \cref{eq:factorization-short} and the \textnormal{\textbf{(b)}} coarse space basis functions from \cref{eq:algebraic-coarse-space} for the equal-weights partition of unity and an overlap region of two elements between subdomains.%
  }\label{fig:facprinciple-and-coarsespace}%
\end{figure}%

\section{Analysis of the two-level Additive Schwarz preconditioner}\label{sec:analysis}
In general, using an s.p.d.~preconditioner \(\boldsymbol{M}\) within the CG method to solve the shifted linear systems \(\boldsymbol{A}_\sigma \boldsymbol{x} = \boldsymbol{b}\) leads to the convergence estimate (see, e.g.,~\cite[Lem.~C.10]{toselliDomainDecompositionMethods2005}) of
\begin{equation}\label{eq:cg-energy-norm-error-reduction}
  {\|\boldsymbol{x} - \boldsymbol{x}_k\|}_{\boldsymbol{A}_\sigma} \le 2 {\left(\frac{\sqrt{\kappa(\boldsymbol{M}^{-1}\boldsymbol{A}_\sigma)}-1}{\sqrt{\kappa(\boldsymbol{M}^{-1}\boldsymbol{A}_\sigma)}+1}\right)}^k {\|\boldsymbol{x} - \boldsymbol{x}_0\|}_{\boldsymbol{A}_\sigma}
  .
\end{equation}
For increasing domain sizes \(L\), applying the quasi-optimal shift-and-invert technique from~\cite{stammQuasiOptimalFactorizationPreconditioner2022} results in \(\lambda_{\min}(\boldsymbol{A}_\sigma) \to 0\) as \(L \to \infty\). However, since the domain expands anisotropically with constant \(\ell\), we have \(\lambda_{\max}(\boldsymbol{A}_\sigma) \ge C > 0\) for all \(L \to \infty\) by using the Laplacian EVP as a lower bound and following the argumentation of \cref{sec:motivation}.
\par
The behavior of the spectrum thus results in \(\kappa(\boldsymbol{A}_\sigma) \to \infty\) for \(L \to \infty\). Therefore, without a preconditioner, i.e., \(\boldsymbol{M}=\boldsymbol{I}\), the number of iterations to decrease the relative error below a given tolerance will drastically increase for \(L \to \infty\) in the linear solution step of \cref{alg:si-lopcg} (line~\ref{algstep:inner-loop}). However, the two-level AS preconditioner \(\boldsymbol{M}_{\textnormal{AS},2}^{-1}\) from \cref{eq:two-level-preconditioners} is \(L\)-robust, i.e., \(\kappa(\boldsymbol{M}_{\textnormal{AS},2}^{-1} \boldsymbol{A}_\sigma) \le C\) for all \(L\), since we can control the lower part of the spectrum with the PerFact coarse space, as shown in the following.

\subsection{Abstract theory}\label{ssec:abstract-theory}
After the conforming finite element discretization with \(\overline{\Omega}_L = \bigcup_{\tau \in \mathcal{T}_h} \tau\) from \cref{sec:dd-for-eigen}, we operate in the finite-dimensional setting with \(V_h(\Omega_L) \subset H^1_0(\Omega_L)\), where we now use the abbreviation \(V_h := V_h(\Omega_L)\) for simplicity. In each step of the eigenvalue algorithm, we have to apply the preconditioner, which amounts to solving a source problem: Given \(f_h \in V_h'\), find \(u_h \in V_h\) such that
\begin{equation}\label{eq:stepLinearSystem}
  a_\sigma(u_h,v_h) = \langle f_h, v_h \rangle ,\quad \forall v_h \in V_h
  .
\end{equation}
The bilinear form \(a_\sigma : V_h \times V_h \to \mathbb{R}\) in \cref{eq:stepLinearSystem} corresponds to the shift-and-invert eigenvalue preconditioner matrix \(\boldsymbol{A}_\sigma\) and is given by
\begin{align}\label{eq:shifted_bilinear_form}
  a_{\sigma}(u,v)
  & := {(\nabla u, \nabla v)}_{L^2(\Omega_L)} + {(V u, v)}_{L^2(\Omega_L)} - \sigma {(u, v)}_{L^2(\Omega_L)}
   .
\end{align}
In order to apply a two-level additive Schwarz method, we decompose the domain \(\Omega_L\) into a set of non-overlapping subdomains \(\{\Omega_i'\}_{i=1}^N\) either by using the natural decomposition into \(N=L^p\) repeating unit cells or with the help of an automatic graph partitioning software, e.g., METIS~\cite{karypisMETISSoftwarePackage1997} or Scotch~\cite{chevalierPTScotchToolEfficient2008}. Then, let \(\{\Omega_i\}_{i=1}^L\) with \(\Omega_i' \subset \Omega_i\) be an overlapping domain decomposition by adding \(\delta \ge 1\) layers of elements \(\tau \in \mathcal{T}_h\) based on the element connectivity graph. Following the abstract theory, c.f., e.g.,~\cite{doleanIntroductionDomainDecomposition2015}, we can define for \(i=1,\dots,N\) the space of restrictions of functions \(v \in V_h\) to a subdomain \(\Omega_i\) as \(V_h(\Omega_i) := \{v|_{\Omega_i} \mid v \in V_h\}\) and the space of \(\overline{\Omega}_i\)-supported functions as \(V_{h,0}(\Omega_i) := \{v|_{\Omega_i} \mid v \in V_h, \; \operatorname{supp}(v) \subset \overline{\Omega}_i\}\). Then we have the restriction operators \(r_i : V_h \to V_{h}(\Omega_i), r_i v = v|_{\Omega_i}\) and their adjoints, the extension-by-zero operators, \(r_i^T : V_{h,0}(\Omega_i) \to V_h\), which extends functions by zero outside of \(\Omega_i\). For simplicity, we will sometimes interpret \(V_{h,0}(\Omega_i)\) as a subspace of \(V_h\) leaving out the extension-by-zero operators.  
\par
For the two-level approach, we also define a coarse space \(V_0 \subset V_h\) with corresponding natural embedding (i.e., the inclusion map that is the linear interpolation~\cite{larsonFiniteElementMethod2013} in the present setup) \(r_0^T : V_0 \to V_h\) and its adjoint \(r_0 : V_h \to V_0\). The matrix form of the two-level additive Schwarz preconditioner for \(\boldsymbol{A}_\sigma\) then reads
\begin{equation}\label{eq:twoLevelASMPreconditioner}
  \boldsymbol{M}_{\textnormal{AS},2}^{-1} = \boldsymbol{R}_0^T \boldsymbol{A}_{\sigma,0}^{-1} \boldsymbol{R}_0^{} + \sum_{i=1}^{N} \boldsymbol{R}_i^T \boldsymbol{A}_{\sigma,i}^{-1} \boldsymbol{R}_i^{}
  \textnormal{ with }
  \boldsymbol{A}_{\sigma,i}^{} =  \boldsymbol{R}_i^{} \boldsymbol{A}_{\sigma} \boldsymbol{R}_i^T
  \textnormal{ and }
  \boldsymbol{A}_{\sigma,0}^{} = \boldsymbol{R}_0^{} \boldsymbol{A}_{\sigma} \boldsymbol{R}_0^T
  ,
\end{equation}
with \({\{\boldsymbol{R}_i^{}\}}_{i=1}^N\) and \(\boldsymbol{R}_0\) denoting the matrix representations of \({\{r_i\}}_{i=1}^N\) and \(r_0\) for the given finite element basis. The local matrices \(\boldsymbol{A}_{\sigma,i}\) are invertible since the corresponding restricted bilinear forms \(a_{\sigma,\Omega_i} : V_{h,0}(\Omega_i) \times V_{h,0}(\Omega_i)\) are positive definite as \(a_{\sigma,\Omega_i}(u,v) := a_{\sigma}(r_i^T u, r_i^T v) > 0\) for all \(u,v \in V_{h,0}(\Omega_i)\) and finite \(L\).
\par
Following~\cite{doleanIntroductionDomainDecomposition2015,spillaneAbstractRobustCoarse2014}, we now recall the main ingredients of the abstract additive Schwarz theory. First, we have the following geometric definition.
\begin{definition}[{finite coloring~\cite[Def.~3.1]{bastianMultilevelSpectralDomain2022}}]\label{def:finite-coloring}%
  The partition \(\{\Omega_i'\}_{i=1}^N\) admits a finite coloring with \(N_{\textnormal{c}} \in \mathbb{N}\) colors, \(N_{\textnormal{c}} \le N\), if there exists a map \(c : \{1,\dots,N\} \to \{1,\dots,N_{\textnormal{c}}\}\) such that  
  \begin{equation}\label{eq:coloring-map}
    i \neq j \land c(i)=c(j) \Rightarrow a_{\sigma}(r_i^T v_i, r_j^T v_j) = 0
    ,\quad \forall v_i \in V_{h,0}(\Omega_i), v_j \in V_{h,0}(\Omega_j)
    .
  \end{equation}  
\end{definition}
In practice, a low \(N_{\textnormal{c}}\) is naturally given when aligning the decomposition with the period of \(V\). The following notion is the critical ingredient to show the effectiveness of a coarse space by lower bounding the smallest eigenvalues of the preconditioned system.
\begin{definition}[{stable decomposition~\cite[Def.~2.7]{spillaneAbstractRobustCoarse2014}~\cite[Def.~3.3]{bastianMultilevelSpectralDomain2022}}]\label{def:stable-decomp}%
  Given a coarse space \(V_0 \subset V_h\), local subspaces \({\{V_{h,0}(\Omega_i)\}}_{1 \le i \le N}\) of \(V_h\), and a constant \(C_0 > 0\), a stable decomposition of \(v \in V_h\) is a family of functions \({\{v_i\}}_{0 \le i \le N}\) that satisfy \(v = v_0 + \sum_{i=1}^N v_i\), \(v_i \in V_{h,0}(\Omega_i)\), \(v_0 \in V_0\), such that
  \begin{equation}
    a_{\sigma}(v_0,v_0) + \sum_{i=1}^N a_{\sigma}(v_i,v_i) \le C_0 \, a_{\sigma}(v,v)
    .
  \end{equation}
\end{definition}
Thus, according to~\cite[Cor.~5.12]{doleanIntroductionDomainDecomposition2015}, we aim to find a stable decomposition for all \(v \in V_h\) since it directly yields a condition number bound of \(\boldsymbol{M}_{\textnormal{AS},2}^{-1} \boldsymbol{A}_{\sigma}\). To apply this abstract theory, we need further notation.
\begin{lemma}[{partition of unity}]\label{def:partitionOfUnity}%
  For the overlapping decomposition \(\{\Omega_i\}_{i=1}^N\) of \(\Omega_L\), there exists a family of partition of unity functions \(\{\chi_i\}_{i=1}^N \subset V_h\), such that \(0 \le \chi_i(\boldsymbol{z}) \le 1 \textnormal{ for } \boldsymbol{z} \in \overline{\Omega}_L\), \(\operatorname{supp}(\chi_i) \subset \overline{\Omega}_i\), and \(\sum_{i=1}^N \chi_i(\boldsymbol{z}) = 1\) for all \(\boldsymbol{z} \in \overline{\Omega}_L\). 
\end{lemma}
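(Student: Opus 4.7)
The plan is to construct each \(\chi_i\) by interpolating a discrete nodal partition of unity into \(V_h\), mirroring the algebraic construction of the partition-of-unity matrices \(\boldsymbol{D}_i\) introduced in \cref{ssec:two-level-dd}. First, I would label the Lagrange nodes of \(V_h\) by \(k=1,\dots,n\) with positions \(\boldsymbol{z}_k\), and for each \(k\) collect \(\mathcal{M}_k := \{1 \le i \le N \mid \boldsymbol{z}_k \in \overline{\Omega}_i\}\), which is non-empty since \(\{\overline{\Omega}_i\}_{i=1}^N\) covers \(\overline{\Omega}_L\). I would then assign nodal weights \(w_i^k := 1/|\mathcal{M}_k|\) for \(i \in \mathcal{M}_k\) and \(w_i^k := 0\) otherwise, so that \(w_i^k \in [0,1]\) and \(\sum_{i=1}^N w_i^k = 1\) at every node.

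Second, I would define \(\chi_i := \sum_k w_i^k \varphi_k\) using the standard vertex hat (\(\mathcal{P}_1\)/\(\mathcal{Q}_1\)) basis \(\{\varphi_k\}\) on the mesh \(\mathcal{T}_h\), for which \(\varphi_k \ge 0\) and \(\sum_k \varphi_k \equiv 1\) pointwise on \(\overline{\Omega}_L\). Choosing the vertex hat basis rather than the full degree-\(r\) Lagrange basis sidesteps the possible sign-indefiniteness of higher-order nodal shape functions, while the natural embedding of the \(\mathcal{P}_1\) subspace in \(V_h\) guarantees \(\chi_i \in V_h\).

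Third, I would verify each claim directly. The sum property follows from \(\sum_{i=1}^N \chi_i = \sum_k \bigl(\sum_{i=1}^N w_i^k\bigr) \varphi_k = \sum_k \varphi_k \equiv 1\) on \(\overline{\Omega}_L\). The pointwise bound \(0 \le \chi_i \le 1\) follows from the convex-combination structure together with \(\varphi_k \ge 0\) and \(\sum_k \varphi_k = 1\). For the support condition, I would use that each \(\Omega_i\) is by construction (overlap added element-wise) a union of elements of \(\mathcal{T}_h\) and that every vertex outside \(\overline{\Omega}_i\) carries weight \(w_i^k = 0\); hence \(\chi_i\) vanishes on every element \(\tau\in\mathcal{T}_h\) disjoint from \(\overline{\Omega}_i\), yielding \(\operatorname{supp}(\chi_i) \subset \overline{\Omega}_i\).

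There is no substantive obstacle in this argument, since it is a standard finite-element partition-of-unity construction. The only point requiring explicit attention is the \(r \ge 2\) case, which is handled by building \(\chi_i\) from the vertex hat basis rather than forcing the use of the full degree-\(r\) nodal basis; this keeps the non-negativity and the reproduction of constants intact while preserving the conformity \(\chi_i \in V_h\).
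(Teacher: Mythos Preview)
Your construction is exactly the equal-weights partition of unity that the paper invokes by citation and spells out in the remark following the lemma, so the overall approach matches. There is, however, a small gap in the support step. By taking \(\mathcal{M}_k := \{i \mid \boldsymbol{z}_k \in \overline{\Omega}_i\}\) you assign a nonzero weight \(w_i^k\) to any vertex \(\boldsymbol{z}_k\) lying on the interior boundary \(\partial\Omega_i \setminus \partial\Omega_L\), and the hat function \(\varphi_k\) centred at such a vertex is supported also on elements adjacent to but not contained in \(\overline{\Omega}_i\). Consequently your conclusion that ``\(\chi_i\) vanishes on every element disjoint from \(\overline{\Omega}_i\)'' is too weak to force \(\operatorname{supp}(\chi_i)\subset\overline{\Omega}_i\): an element sharing only a vertex with \(\overline{\Omega}_i\) is not disjoint from it, yet lies outside it and can carry \(\chi_i\neq 0\).

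The standard remedy, which the paper's remark adopts, is to let \(\mathcal{M}_k\) collect only those \(i\) for which \(\boldsymbol{z}_k\) is an \emph{interior} node of \(\Omega_i\) (equivalently \(k\in\operatorname{dof}(\Omega_i)\)). The overlap \(\delta\ge 1\) guarantees that every mesh node is interior to at least one subdomain, so \(\mathcal{M}_k\neq\emptyset\) still holds; and now every vertex of any element \(\tau\not\subset\overline{\Omega}_i\) carries weight zero, which gives \(\chi_i|_\tau=0\) and hence \(\operatorname{supp}(\chi_i)\subset\overline{\Omega}_i\) directly.
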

\begin{proof}
  See, e.g.,~\cite[Lem.~5.7]{doleanIntroductionDomainDecomposition2015} and~\cite[Lem.~3.4]{toselliDomainDecompositionMethods2005} with \(\{\Omega_i\}_{i=1}^N\) satisfying the overlap and finite coloring assumptions. 
\end{proof}
\begin{remark}
  For the construction of the partition of unity, we use the finite element basis representation \(v = \sum_{k=1}^n v_k N_k\) for any \(v \in V_h\) (see \cref{sec:dd-for-eigen}). With \(\operatorname{dof}(\Omega_i)\) denoting the set of all internal degrees of freedoms for subdomain \(\Omega_i\), the most straightforward choice is the equal-weights partition of unity~\cite{spillaneAbstractRobustCoarse2014} where \(\chi_i(v) := \sum_{k \in \operatorname{dof}(\Omega_i)} \frac{1}{\mu_{k}} v_k N_k\) with \(\mu_{k} := |\left\{j \ \middle|\ 1 \le j \le N, k \in \operatorname{dof}(\Omega_j)\right\}|\) denoting the number of subdomains for which \(k\) is an internal degree of freedom. Other choices are based on distance functions, e.g.~\cite[Lem.~3.4]{toselliDomainDecompositionMethods2005}, and have the additional property that \({\|\nabla \chi_i\|}_{\infty} \le C / \delta_i\), where \(\delta_i\) is the minimal overlap thickness of the \(i\)-th subdomain. This distance-based partition of unity is favorable when the overlap varies.
\end{remark}
In calculating the quasi-optimal shift \(\sigma\), we already solved the eigenvalue problem on the unit cell \(\Omega_1\) with \(\boldsymbol{x}\)-periodic and \(\boldsymbol{y}\)-zero boundary conditions in \cref{eq:weak-form-cell-problem}, where we abbreviate its solution \(\psi_h\) from now on as \(\psi\) for simplicity.
We also define \(E_{\boldsymbol{x}}^{\#} : V_{h,\#}(\Omega_1) \to V_{h,\#}(\Omega_L)\), \(E_{\boldsymbol{x}}^{\#} u(\boldsymbol{x},\boldsymbol{y}) := u(\operatorname{mod}(\boldsymbol{x},1), \boldsymbol{y})\), as the periodic extension operator in the \(\boldsymbol{x}\)-direction.
Although we do not solve local eigenvalue problems in practice since \(\psi\) is already computed, we can still analyze the method in the context of spectral coarse spaces when we take care of the periodicity by modifying the decomposition.

\subsection{Aligning the decomposition}\label{ssec:aligning-the-decomposition}
Recall from \cref{eq:geometry-description} that the domain \(\Omega_L\) is an abstract box. Naturally, \(\Omega_L\) is split into \(L^p\) cells, so every cell can be identified with an integer vector \(\boldsymbol{i} \in {\{1,\dots,L\}}^p\) when considering the distance to the unit cell. We map this vector translation to a one-dimensional index via the bijective map \(n : {\{1,\dots,L\}}^p \to \{1,\dots,L^p\}\) where \(n(\boldsymbol{i}) := 1 + \sum_{j=1}^p L^{j-1} (\boldsymbol{i}_j - 1)\).
The corresponding inverse map \(\boldsymbol{n}^{-1} : \{1,\dots,L^p\} \to {\{1,\dots,L\}}^p\) can then be used to define the \textit{periodic decomposition} \(\{\Omega_i^{\#}\}^{L^p}_{i=1}\), where each periodic cell is given by 
\begin{equation}\label{eq:periodic-cells-definition}
  \Omega_i^{\#} :=
  \bigtimes_{j=1}^p \left( {[\boldsymbol{n}^{-1}(i)]}_j-1,{[\boldsymbol{n}^{-1}(i)]}_j \right) \times {(0, \ell)}^{q}
  .
\end{equation}
Every subdomain \(\Omega_i\) belongs to a periodic neighborhood, defined by the following.
\begin{definition}[periodic neighborhood]\label{def:periodicNeighborhood}%
  Let \(\Omega_i \in \{\Omega_i\}^{N}_{i=1}\) be any subdomain. Then, the periodic neighborhood \(\tilde{\Omega}_i\) of \(\Omega_i\) is defined as the smallest period-aligned box (see \cref{sec:dd-for-eigen}) containing \(\Omega_i\), i.e., \ \(\tilde{\Omega}_i := \argmin_{\{\Omega \in \mathcal{B}_{\#} \,|\, \Omega_i \subset \Omega\}} \operatorname{vol}(\Omega)\). With \(\mathcal{I}_i^{\#,\textnormal{n}} := \{ j \in \{1,\dots,L^p\} \,|\, \tilde{\Omega}_i \cap \Omega_j^{\#} \neq \emptyset \}\), we have \(\tilde{\Omega}_i = \bigcup_{i \in \mathcal{I}_i^{\#,\textnormal{n}}} \Omega_i^{\#}\). 
\end{definition}
\cref{fig:geometricDecompositions} visualizes the relation between \(\Omega_i\), \(\Omega_i'\), and \(\tilde{\Omega}_i\). A collection of periodic neighborhoods \(\{\tilde{\Omega}_i\}_{i=1}^N\) is itself again an overlapping decomposition of \(\Omega_L\), where the decomposition reflects the periodicity of the potential \(V\).  
The periodic neighborhood decomposition also induces their corresponding restrictions \(\tilde{r}_i : V_h \to V_h(\tilde{\Omega}_i)\) and partitions of unity \(\tilde{\chi}_i\), defined by the zero-extension of \(\chi_i\) from \(\Omega_i\) to \(\tilde{\Omega}_i\). The set \(\{\tilde{\Omega}_i\}_{i=1}^N\) gives rise to a multiplicity constant in the following.  
\begin{definition}[periodic neighborhood intersection multiplicity]\label{def:periodic-neighborhood-intersection}%
  Let \(\tilde{k}_0\) be the maximum number of periodic neighborhoods to which one periodic cell from \(\{\Omega^{\#}_i\}_{i=1}^{L^p}\) can belong, i.e., \(\tilde{k}_0 := \max_{i \in \{1,\dots,N\}} |\{ j \in \{1,\dots,L^p\} \ |\ \tilde{\Omega}_i \cap \Omega_j^{\#} \neq \emptyset \}|\).  
\end{definition}
\begin{figure}[t]
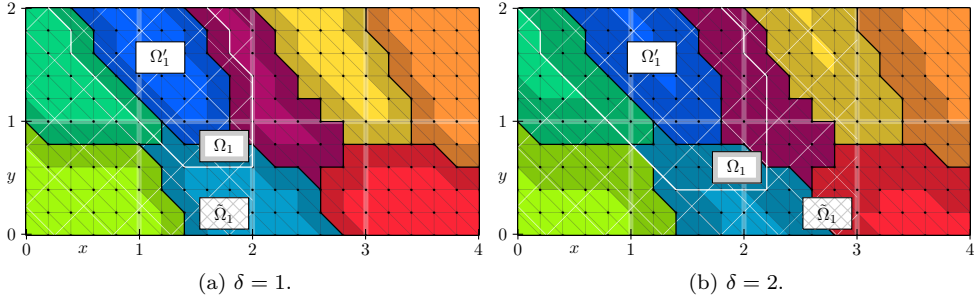
%
  \centering%
  \newcommand{\datapath}{./plots/dd-geometry-sketch-tikz}%
  \subfloat[%
    \(\delta=1\).%
  ]{%
    \input{plots/dd-geometry-sketch-tikz/dd-geometry-sketch-tikz-1.tex}%
  }%
  \hfill%
  \subfloat[%
    \(\delta=2\).%
  ]{%
    \input{plots/dd-geometry-sketch-tikz/dd-geometry-sketch-tikz-2.tex}%
  }%
  \caption{%
    Sketch of the non-overlapping \(\{\Omega_i'\}_{i=1}^8\) (black border), overlapping \(\{\Omega_i\}_{i=1}^8\) (white border), and periodic neighborhood decomposition \(\{\tilde{\Omega}_i\}_{i=1}^8\) (cross-hatch) of \(\Omega_4 := (0,4) \times (0,2)\) for an overlap (dark shades) of \textnormal{\textbf{(a)}} \(\delta=1\) and \textnormal{\textbf{(b)}} \(\delta=2\) layers of elements. An increase of the periodic neighborhood from \(\tilde{\Omega}_1 = (0,2) \times (0,2)\) for \(\delta=1\) to \(\tilde{\Omega}_1 = (0,3) \times (0,2)\) for \(\delta=2\) can be observed. 
  }\label{fig:geometricDecompositions}%
\end{figure}%
\par
Similar to the classical Nicolaides coarse space for the Laplace problem~\cite{doleanAnalysisTwolevelSchwarz2012}, we only include subdomains away from the boundary in the coarse space due to technical reasons, such as non-matching boundary conditions. In the present setup, only the boundaries in the expanding \(\boldsymbol{x}\)-direction need particular focus. Thus, let us define the set of all \(\boldsymbol{x}\)-boundary subdomains as \(\mathcal{I}^{\textnormal{b}} := \{i \in 1,\dots,N \mid \partial \Omega_i \cap ({\{0,L\}}^p \times {(0,\ell)}^q) \neq \emptyset\}\). This induces the set of all interior subdomains as \(\mathcal{I}^{\textnormal{i}} := \{1,\dots,N\} \setminus \mathcal{I}^{\textnormal{b}}\). Let us also define the set \(\tilde{\mathcal{I}}^{\textnormal{b}} := \{i \in 1,\dots,N \mid \partial \tilde{\Omega}_i \cap ({\{0,L\}}^p \times {(0,\ell)}^q) \neq \emptyset\}\), whose periodic neighborhoods touch the \(\boldsymbol{x}\)-boundary. The other periodic neighborhoods are grouped into \(\tilde{\mathcal{I}}^{\textnormal{i}} := \{1,\dots,N\} \setminus \tilde{\mathcal{I}}^{\textnormal{b}}\). 
Based on this period-aligned splitting, we define the periodic coarse space.
\begin{definition}[PerFact coarse space]\label{def:periodicCoarseSpace}%
  With \(\psi\) from \cref{eq:weak-form-cell-problem}, the space \(V_0 := \operatorname{span}{\left( {\{\chi_i E^\#_{\boldsymbol{x}}\psi\}}_{i \in \tilde{\mathcal{I}}^{\textnormal{i}}} \right)}\) is called the periodic factorization (PerFact) coarse space.
\end{definition}
\begin{remark}
  Note that \cref{def:periodicCoarseSpace,def:algebraic-perfact-coarse-space} only differ for \(\boldsymbol{x}\)-boundary touching subdomains, which are \(L\)-asymptotically irrelevant.
\end{remark}
For the analysis, we also need the notion of cell symmetry.
\begin{definition}[\(\boldsymbol{x}\)-cell-symmetric mesh]%
  Let \(\mathcal{T}_h\) be a triangulation, \(\bar{x}_i\) the \(i\)-th component of its center of mass \(\bar{\boldsymbol{x}}\), and \(R_{\bar{x}_i}\) the reflection across the plane \(P_{\bar{x}_i} = \left\{ \boldsymbol{z} \in \mathbb{R}^d \ \middle|\ \langle e_i , \boldsymbol{z} \rangle = \bar{x}_i \right\}\). We say \(\mathcal{T}_h\) is \(\boldsymbol{x}\)-symmetric if for all expanding directions with \(i \in \{1,\dots,p\}\), \(\tau \in \mathcal{T}_h \Rightarrow R_{\bar{x}_i} \tau \in \mathcal{T}_h\).
\end{definition}
\begin{definition}[\(\boldsymbol{x}\)-cell-symmetric potential]%
  A potential \(V\) is said to be \(\boldsymbol{x}\)-cell-symmetric for an \(\tilde{\Omega}_i\) if for all \(x_i\)-dimensions with \(i \in \{1,\dots,p\}\)
  \begin{equation}
    V(x_1,\dots,x_i,\dots,x_p,\boldsymbol{y})
    =
    V(x_1,\dots,2\bar{x}_i - x_i,\dots,x_p,\boldsymbol{y})
    \quad \textnormal{ for a.e.~} (\boldsymbol{x},\boldsymbol{y}) \in \tilde{\Omega}_i
    .
  \end{equation}
\end{definition}

\subsection{A condition number bound for cell-symmetric potentials}\label{ssec:condition-number-bound}
We are now prepared to analyze the condition number of the preconditioned system. As we operate near the \emph{edge of coercivity}, meaning that we shifted the operator by \(\sigma\), which is only a little less than the smallest eigenvalue, we must be cautious with the analysis. In particular, we must ensure that local Neumann problems remain positive semidefinite for the theory of spectral coarse spaces to hold. Thus, we base the analysis on the following assumptions.
\begin{assumptionslist}
  \item\label{ass:symm-trian} The triangulation of all periodic neighborhoods \(\tilde{\Omega}_i\) is \(\boldsymbol{x}\)-cell-symmetric.
  \item\label{ass:symm-pot} The potential \(V\) is \(\boldsymbol{x}\)-cell-symmetric for all \(\tilde{\Omega}_i\).
\end{assumptionslist}
\par
Although these assumptions seem restricted, the method still works in the general case, as shown numerically in \cref{sec:numericalExperiments}. To apply the theory of spectral coarse spaces, we show that \(E^\#_{\boldsymbol{x}} \psi\), defined by \cref{eq:weak-form-cell-problem}, is a solution to a generalized eigenvalue problem (GEVP) of a particular structure. In the usual setting, the GEVPs are formulated on overlapping subdomains \(\Omega_i\) or inside the overlapping zone. In our setting, however, the components \(E_{\boldsymbol{x}}^{\#} \psi\) from \cref{def:periodicCoarseSpace} are periodic. This property can be used if we formulate the GEVPs on the periodic neighborhood \(\tilde{\Omega}_i\).
\begin{definition}[GEVPs]\label{def:gevps}%
  For all periodic neighborhoods \(\tilde{\Omega}_i\), we define the following generalized eigenvalue problems on \(W_i\): Find \((\tilde{p}_i^{(k)}, \tilde{\lambda}_i^{(k)}) \in (W_i \setminus \{0\}) \times \mathbb{R}\), such that
  \begin{equation}\label{eq:GEVPEquation}
    \tilde{a}_{\sigma,\tilde{\Omega}_i}(\tilde{p}_i^{(k)},v)
    =
    \tilde{\lambda}_i^{(k)} \tilde{b}_{\tilde{\Omega}_i} (\tilde{p}_i^{(k)},v)
    ,\quad \forall v \in W_i
    ,
  \end{equation}
  where we set \(\tilde{b}_{\tilde{\Omega}_i} (u,v) := \tilde{a}_{0,\tilde{\Omega}_i}(\tilde{\chi}_i u, \tilde{\chi}_i v)\). The default case of \(W_i=V_h(\tilde{\Omega}_i)\) is called the Neumann GEVP (NGEVP), and the case of \(W_i=V_{h,\#}(\tilde{\Omega}_i)\) is called the periodic GEVP (PGEVP), whose eigenpair is then denoted by \((\tilde{p}_{i,\#}^{(k)},\tilde{\lambda}_{i,\#}^{(k)})\).
\end{definition}
\begin{remark}
  There is some flexibility in the choice of \(\tilde{b}_{\tilde{\Omega}_i}\) in \cref{eq:GEVPEquation} (see~\cite{bastianMultilevelSpectralDomain2022,bootlandOverlappingSchwarzMethods2023}) as long as we can extract bounds on \(\|\tilde{\chi}_i \cdot\|_{\tilde{a}_{\sigma,\tilde{\Omega}_i}}\) (for use in \cref{thm:stable-splitting}), rendering, e.g., the plain \(L^2\)-inner product impractical. However, we are only interested in the kernel of \(\tilde{a}_{\sigma,\tilde{\Omega}_i}\), which is unaffected by the specific choice of \(\tilde{b}_{\tilde{\Omega}_i}\) as long as the GEVPs are non-defective. This provides some freedom in carrying out the analysis.
\end{remark}
\begin{remark}
  In contrast to the classical approaches, we do not use the same generating bilinear form to define the left- and the right-hand side of the GEVP\@. In particular, only the left side of \cref{eq:GEVPEquation} has a negative shift term in \(\tilde{a}_{\sigma,\tilde{\Omega}_i}\), while the right-hand side is generated without shift by \(\tilde{a}_{0,\tilde{\Omega}_i}\). This ensures that \(\tilde{b}_{\tau}(u|_\tau,u|_\tau) \ge 0 \; \forall u \in V_h(\tilde{\Omega}_i)\), which guarantees the positive semidefiniteness of \(\tilde{b}_{\tilde{\Omega}_i}\), which is a crucial requirement for the theory. A similar idea finds application in the context of non-self-adjoint or indefinite problems, e.g., in the \(\mathcal{H}\)-GenEO approach of~\cite{bootlandOverlappingSchwarzMethods2023}.
\end{remark}
We will now show one of the crucial observations: the PGEVP and the NGEVP have the same first eigenpair under \cref{ass:symm-trian,ass:symm-pot}. To formalize that result, we need the following notion of reflection.
\begin{definition}[reflection operator]\label{def:reflectionOfFunctions}%
  Let \(\tilde{\Omega}_i\) be an \(\boldsymbol{x}\)-symmetric triangulation. For any function \(v \in V_h(\tilde{\Omega}_i)\) with \(m = |V_h(\tilde{\Omega}_i)|\) DOFs, we define the reflection \(R_{\bar{x}_i} : V_h(\tilde{\Omega}_i) \to V_h(\tilde{\Omega}_i)\) across the \(P_{\bar{x}_i}\)-plane using the finite element basis representation \(v = \sum_{k=1}^m v_k N_k\) as
  \begin{equation}
    v \mapsto R_{\bar{x}_i} v := \sum_{k=1}^m \left( \sum_{l=1}^m R_{kl} v_l \right) N_k
    ,
  \end{equation}
  where the permutation matrix \(\boldsymbol{R} := {\left(R_{kl}\right)}_{1\le k,l \le m}\) (after reordering) has the form
  \begin{equation}
    \boldsymbol{R} = \begin{bmatrix}
      \boldsymbol{I}_{m_1} \otimes \boldsymbol{S} & \boldsymbol{0} \\
      \boldsymbol{0} & \boldsymbol{I}_{m_2}
    \end{bmatrix}
    ,\textnormal{ with the pairwise swap matrix }
    \boldsymbol{S} = \begin{bmatrix}
      0 & 1 \\
      1 & 0
    \end{bmatrix}
    ,
  \end{equation}
  where all \(m\) nodes are split into \(m_2\) on the reflection plane and \(2m_1\) remaining nodes.
\end{definition}
\begin{lemma}\label{lem:pgevp_ngecp_same_groundstate}%
  Assume \labelcref{ass:symm-trian},\labelcref{ass:symm-pot}, and \(i \in \tilde{\mathcal{I}}^{\textnormal{i}}\), then the unique lowest eigenpair of the NGEVPs and PGEVPs from \cref{def:gevps} is given by \((\tilde{p}_i^{(1)}, \tilde{\lambda}_i^{(1)}) = (\tilde{r}_i E_{\boldsymbol{x}}^{\#} \psi,0)\).
\end{lemma}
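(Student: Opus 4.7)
I would split the claim into three steps: (i) the candidate $\phi_P := \tilde{r}_i E_{\boldsymbol{x}}^{\#}\psi$ lies in both $V_h(\tilde{\Omega}_i)$ and $V_{h,\#}(\tilde{\Omega}_i)$; (ii) it belongs to the kernel of $\tilde{a}_{\sigma,\tilde{\Omega}_i}$ on each of these spaces, so $(\phi_P, 0)$ is an eigenpair of both the PGEVP and the NGEVP; and (iii) $0$ is strictly the smallest eigenvalue and $\phi_P$ the unique positive ground state of both problems. Step (i) is immediate: $\phi_P$ is $1$-periodic in $\boldsymbol{x}$ by construction, hence also periodic with the larger period of $\tilde{\Omega}_i$, and inherits the homogeneous $\boldsymbol{y}$-trace from $\psi$.

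For the PGEVP half of (ii), I would rely on unit-translation invariance. Since $V$ and $\mathcal{T}_h$ are $1$-periodic in each expanding direction, the shifted operator on $\tilde{\Omega}_i$ with periodic conditions commutes with unit shifts. A Krein-Rutman argument applied to a coercive regularization of the operator makes the periodic ground state on $\tilde{\Omega}_i$ simple and strictly positive, hence translation invariant, hence $1$-periodic; uniqueness then identifies it with $\phi_P$ up to scaling, and matches the eigenvalue with $\sigma$, which becomes $0$ after the shift. For the NGEVP half of (ii), I would exploit the cell reflections $R_{\bar{x}_j}$: under \labelcref{ass:symm-trian}--\labelcref{ass:symm-pot} the form $\tilde{a}_{\sigma,\tilde{\Omega}_i}$ is invariant under each $R_{\bar{x}_j}$, and periodicity together with cell symmetry of $V$ forces $V|_{\Omega_1}$ itself to be symmetric about $\bar{\boldsymbol{x}}_{\Omega_1}$, so by uniqueness $\psi$ is symmetric and this symmetry propagates to $\phi_P$.

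The decisive observation is that $R_{\bar{x}_j}$ swaps the two opposite $\boldsymbol{x}_j$-faces of $\tilde{\Omega}_i$, so the reflection-group orbit of any $\boldsymbol{x}$-boundary node $k$ coincides with its periodic-identification equivalence class used to build the periodic basis function $\tilde{N}_k \in V_{h,\#}(\tilde{\Omega}_i)$. By the two invariances, $\tilde{a}_{\sigma,\tilde{\Omega}_i}(\phi_P, N_j)$ is constant for $j$ in this orbit, and its sum equals $\tilde{a}_{\sigma,\tilde{\Omega}_i}(\phi_P, \tilde{N}_k) = 0$ by the PGEVP step; hence each term vanishes, while interior-node cases reduce directly to the assembled cell problem. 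Step (iii) is another Krein-Rutman application on $V_h(\tilde{\Omega}_i)$ and $V_{h,\#}(\tilde{\Omega}_i)$: the ground state of each space is simple and positive, and $\phi_P > 0$ with eigenvalue $0$ fixes it uniquely. The main obstacle I foresee is the orbit bookkeeping for corner-type boundary nodes whose orbit can contain up to $2^p$ elements; this relies on $\tilde{\Omega}_i$ having integer side lengths so that $R_{\bar{x}_j}$ maps unit-cell centers to unit-cell centers, allowing the cell-symmetry assumption to descend cleanly from $\tilde{\Omega}_i$ to each constituent unit cell.
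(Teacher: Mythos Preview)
Your plan is correct and leads to a valid proof, but it takes the \emph{opposite} direction from the paper. The paper starts on the Neumann side: it considers the minimiser $p^{(1)}$ of the standard $L^2$-Rayleigh quotient of $\tilde{a}_{\sigma,\tilde{\Omega}_i}$ over $V_h(\tilde{\Omega}_i)$, uses Krein--Rutman to get simplicity and strict positivity, and then applies the reflection $R_{\bar{x}_j}$ to $p^{(1)}$ itself. A short case analysis ($R_{\bar{x}_j}p^{(1)}\in\operatorname{span}p^{(1)}$ vs.\ not) forces $p^{(1)}$ to be $\boldsymbol{x}$-symmetric, hence $p^{(1)}\in V_{h,\#}(\tilde{\Omega}_i)$; the min--max principle over the inclusion $V_{h,\#}(\tilde{\Omega}_i)\subset V_h(\tilde{\Omega}_i)$ then pins both ground-state eigenvalues at $0$, and the identification with $\tilde{r}_iE_{\boldsymbol{x}}^{\#}\psi$ follows. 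You instead go periodic~$\to$~Neumann: you take the concrete candidate $\phi_P$, establish that it kills every \emph{periodic} test function, and then use the reflection-orbit/periodic-class coincidence on boundary nodes to upgrade this to every Neumann basis function. Both routes hinge on the same two ingredients (Krein--Rutman positivity and reflection invariance under \labelcref{ass:symm-trian}--\labelcref{ass:symm-pot}), but the paper applies the reflection once to an abstract eigenfunction, while you apply it node-by-node, which is why you incur the orbit bookkeeping you flag at the end. The paper's argument is shorter; yours is more constructive and makes transparent why no extra natural-boundary contribution survives. One caution: in your step~(iii) Krein--Rutman does not apply directly to the $\tilde{b}$-GEVP, and the paper handles this by first working with the $L^2$-Rayleigh quotient (establishing that $\tilde{a}_{\sigma,\tilde{\Omega}_i}$ is PSD with one-dimensional kernel) and only then reading off the GEVP ground state; you should route your simplicity/uniqueness argument the same way rather than invoking Krein--Rutman on the GEVP itself.
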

\begin{proof}
  Let \(i \in \tilde{\mathcal{I}}^{\textnormal{i}}\). Assume that \(p^{(1)}\) is the first eigenfunction of the homogeneous EVP with the standard \(L^2(V_{h}(\tilde{\Omega}_i))\)-inner product, meaning
  \begin{equation}
    p^{(1)}
    \in
    \argmin_{u \in V_{h}(\tilde{\Omega}_i)} \frac{\tilde{a}_{\sigma,\tilde{\Omega}_i}(u,u)}{{(u,u)}_{L^2(V_{h}(\tilde{\Omega}_i))}}
    .
  \end{equation}
  By~\cite[Thm.~8.38]{gilbargEllipticPartialDifferential2001}, \(p^{(1)}\) is simple and strictly positive (by fixing the sign). Now fix an arbitrary \(x_j\)-dimension with \(j \in \{1,\dots,p\}\) and consider the reflection \(q^{(1)} := R_{\bar{x}_j} p^{(1)}\) of \(p^{(1)}\) across the plane \(P_{\bar{x}_j}\). Regarding the basis coefficient vector \(\boldsymbol{p}\) of \(p^{(1)}\), this means \(\boldsymbol{q} = \boldsymbol{R} \boldsymbol{p}\) in the sense of \cref{def:reflectionOfFunctions}. Now, two things can happen.
  \paragraph{Case 1: \(\boldsymbol{q} \in \operatorname{span} \boldsymbol{p}\)} In this case, there exists an \(\alpha \in \mathbb{R}\) such that \(\boldsymbol{p}\) is an eigenvector with \(\boldsymbol{q} := \boldsymbol{R} \boldsymbol{p} = \alpha \boldsymbol{p}\). Since the reflection matrix \(\boldsymbol{R}\) is a permutation matrix and thus an orthogonal matrix, it only has eigenvalues \(\alpha \in \{-1,1\}\), such that there are again two cases. For \(\alpha=1\), we have that \(p^{(1)}\) is symmetric w.r.t.\ the plane \(P_{\bar{x}_j}\) since it is invariant under the reflection. The other case of \(\alpha=-1\) is impossible since all coefficients in \(\boldsymbol{p}\) are strictly positive, and a permutation matrix \(\boldsymbol{R}\) can not change this property for the equality \(\boldsymbol{p} = - \boldsymbol{R} \boldsymbol{p}\) to hold.
  \paragraph{Case 2: \(\boldsymbol{q} \notin \operatorname{span} \boldsymbol{p}\)} In that case, we observe that \(q^{(1)}\) and \(p^{(1)}\) have the same Rayleigh quotient since
  \begin{equation}\label{eq:sameRayleighs}
    \frac{\tilde{a}_{\sigma,\tilde{\Omega}_i}(q^{(1)},q^{(1)})}{{(q^{(1)},q^{(1)})}_{L^2(V_{h}(\tilde{\Omega}_i))}}
    =
    \frac{\tilde{a}_{\sigma,\tilde{\Omega}_i}(R_{\bar{x}_j} p^{(1)},R_{\bar{x}_j} p^{(1)})}{{(R_{\bar{x}_j} p^{(1)},R_{\bar{x}_j} p^{(1)})}_{L^2(V_{h}(\tilde{\Omega}_i))}}
    =
    \frac{\tilde{a}_{\sigma,\tilde{\Omega}_i}(p^{(1)},p^{(1)})}{{(p^{(1)},p^{(1)})}_{L^2(V_{h}(\tilde{\Omega}_i))}}
    ,
  \end{equation}
  in which we used \({\|R_{\bar{x}_j} p^{(1)}\|}^2_{L^2(\tilde{\Omega}_i)} = {\|p^{(1)}\|}^2_{L^2(\tilde{\Omega}_i)}\) by the orthogonality of the reflection \(R_{\bar{x}_j}\) and \(\tilde{a}_{\sigma,\tilde{\Omega}_i}(R_{\bar{x}_j} p^{(1)},R_{\bar{x}_j} p^{(1)}) = \tilde{a}_{\sigma,\tilde{\Omega}_i}(p^{(1)},p^{(1)})\) by the symmetry of the mesh \(\tilde{\Omega}_i\) and the potential \(V\) w.r.t.\ to the \(P_{\bar{x}_j}\)-plane, \labelcref{ass:symm-pot,ass:symm-trian}. However, \cref{eq:sameRayleighs} would imply that the first eigenspace is two-dimensional since \(\boldsymbol{q}\) is not in the \(\operatorname{span} \boldsymbol{p}\), although it has a minimal Rayleigh quotient, which is a contradiction.
  \par
  Thus, all in all, \(p^{(1)}\) must be symmetric w.r.t.\ the \(P_{\bar{x}_j}\)-plane. Since the arguments apply for all expanding directions, the first eigenfunction \(p^{(1)}\) is \(\boldsymbol{x}\)-symmetric, i.e., \(p^{(1)} = R_{\bar{x}_j} p^{(1)}\) for all \(j \in \{1,\dots,p\}\), implying that \(\tilde{b}_{\tilde{\Omega}_i}(R_{\bar{x}_j} p^{(1)},R_{\bar{x}_j} p^{(1)}) = \tilde{b}_{\tilde{\Omega}_i}(p^{(1)},p^{(1)})\). Symmetry also yields periodicity on the \(\boldsymbol{x}\)-boundaries resulting in \(p^{(1)} \in V_{h,\#}(\tilde{\Omega}_i)\). Using the min-max principle for the GEVPs, we thus have
  \begin{equation}
    \min_{u \in V_{h}(\tilde{\Omega}_i)} \frac{\tilde{a}_{\sigma,\tilde{\Omega}_i}(u,u)}{\tilde{b}_{\tilde{\Omega}_i}(u,u)}
    =
    \frac{\tilde{a}_{\sigma,\tilde{\Omega}_i}(p^{(1)},p^{(1)})}{\tilde{b}_{\tilde{\Omega}_i}(p^{(1)},p^{(1)})}
    \ge
    \min_{u \in V_{h,\#}(\tilde{\Omega}_i)} \frac{\tilde{a}_{\sigma,\tilde{\Omega}_i}(u,u)}{\tilde{b}_{\tilde{\Omega}_i}(u,u)}
    =
    0
    ,
  \end{equation}
  where the last equality holds by definition of the shift \(\sigma\). However, due to the subset property, \(V_{h,\#}(\tilde{\Omega}_i) \subset V_{h}(\tilde{\Omega}_i)\), we also have
  \begin{equation}
    \min_{u \in V_{h}(\tilde{\Omega}_i)} \frac{\tilde{a}_{\sigma,\tilde{\Omega}_i}(u,u)}{\tilde{b}_{\tilde{\Omega}_i}(u,u)}
    \le
    \min_{u \in V_{h,\#}(\tilde{\Omega}_i)} \frac{\tilde{a}_{\sigma,\tilde{\Omega}_i}(u,u)}{\tilde{b}_{\tilde{\Omega}_i}(u,u)}
    =
    0
    .
  \end{equation}
   Thus, the NGEVPs and PGEVPs have the same first eigenpair \((\tilde{r}_i E_{\boldsymbol{x}}^{\#} \psi,0)\).
\end{proof}
\begin{lemma}\label{cor:positive_semidefinite}%
  Assume \labelcref{ass:symm-trian,ass:symm-pot}, then the bilinear forms \(\tilde{a}_{\sigma,\tilde{\Omega}_i} : V_h(\tilde{\Omega}_i) \times V_h(\tilde{\Omega}_i)\) are positive semidefinite for \(i \in \tilde{\mathcal{I}}^{\textnormal{i}}\) and positive definite for \(i \in \tilde{\mathcal{I}}^{\textnormal{b}}\).
\end{lemma}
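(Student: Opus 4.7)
The plan is to reduce positive (semi)definiteness of $\tilde a_{\sigma,\tilde\Omega_i}$ to a lower bound $\mu_i \ge \sigma$ on the smallest eigenvalue of the standard Neumann-type EVP with $L^2$ right-hand side, namely
\begin{equation*}
  \tilde a_{0,\tilde\Omega_i}(p_i,v) = \mu_i\,(p_i,v)_{L^2(\tilde\Omega_i)}, \quad \forall v \in V_h(\tilde\Omega_i).
\end{equation*}
The Rayleigh-quotient characterization then yields $\tilde a_{\sigma,\tilde\Omega_i}(u,u) \ge (\mu_i-\sigma)\,\|u\|_{L^2(\tilde\Omega_i)}^2$, so PSD follows from $\mu_i \ge \sigma$ and PD from $\mu_i > \sigma$. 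Note that the $L^2$-inner product is positive definite, in contrast to the $\tilde b_{\tilde\Omega_i}$-form used in \cref{def:gevps}, which is why I switch the right-hand side for this step.

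For the interior case $i \in \tilde{\mathcal I}^{\textnormal i}$, I would replay the Case~1/Case~2 reflection dichotomy from the proof of \cref{lem:pgevp_ngecp_same_groundstate}, but now with $(\cdot,\cdot)_{L^2(\tilde\Omega_i)}$ in place of $\tilde b_{\tilde\Omega_i}$. The $L^2$-inner product is invariant under each reflection $R_{\bar x_j}$, and \labelcref{ass:symm-trian,ass:symm-pot} ensure the same invariance of $\tilde a_{0,\tilde\Omega_i}$, so the argument goes through verbatim: the (simple, strictly positive) first eigenfunction is $\boldsymbol x$-symmetric across every $P_{\bar x_j}$-plane, hence $\boldsymbol x$-periodic on $\partial\tilde\Omega_i$, and therefore lies in $V_{h,\#}(\tilde\Omega_i)\subset V_h(\tilde\Omega_i)$. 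On $V_{h,\#}(\tilde\Omega_i)$, the function $\tilde r_i E_{\boldsymbol x}^{\#}\psi$ satisfies the eigenvalue equation with eigenvalue $\sigma$ (integration by parts on each unit cell using periodicity), and Perron--Frobenius positivity of $\psi$ makes it the ground state. Combined with the inclusion $V_{h,\#}(\tilde\Omega_i)\subset V_h(\tilde\Omega_i)$ and min-max, this gives $\mu_i = \sigma$ and PSD.

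For the boundary case $i \in \tilde{\mathcal I}^{\textnormal b}$, functions in $V_h(\tilde\Omega_i) = \{v|_{\tilde\Omega_i} : v \in V_h\}$ inherit homogeneous Dirichlet conditions on $\partial\tilde\Omega_i\cap\partial\Omega_L$ from $V_h \subset H_0^1(\Omega_L)$. Writing $V_h^N(\tilde\Omega_i)$ for the Neumann-in-$\boldsymbol x$ analogue used in the interior case, we have the strict inclusion $V_h(\tilde\Omega_i)\subsetneq V_h^N(\tilde\Omega_i)$, and min-max immediately gives $\mu_i \ge \sigma$. Strict inequality follows from simplicity: by the interior argument the ground-state eigenspace of the problem on $V_h^N(\tilde\Omega_i)$ is spanned by $\tilde r_i E_{\boldsymbol x}^{\#}\psi$, which is strictly positive on $\partial\tilde\Omega_i\cap\partial\Omega_L$ and thus not an admissible element of $V_h(\tilde\Omega_i)$. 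Hence the infimum over $V_h(\tilde\Omega_i)$ cannot be attained in the one-dimensional ground-state eigenspace, forcing $\mu_i > \sigma$ and PD of $\tilde a_{\sigma,\tilde\Omega_i}$.

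The step I expect to be the most delicate is this final strict-inequality argument: one must rule out that a Dirichlet-admissible competitor achieves the Neumann minimum. It rests entirely on simplicity of the Neumann ground state and the Perron--Frobenius positivity of $\psi$, both already invoked in \cref{lem:pgevp_ngecp_same_groundstate}, so no new analytical tools should be required beyond carefully articulating that the additional Dirichlet trace condition kills the only candidate minimizer.
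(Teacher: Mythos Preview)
Your proposal is correct and follows essentially the same route as the paper. The paper simply cites \cref{lem:pgevp_ngecp_same_groundstate} for the interior case and, for the boundary case, enlarges to a space without the Dirichlet constraint on the $\boldsymbol{x}$-boundary, identifies $\tilde r_i E_{\boldsymbol x}^{\#}\psi$ as the unique positive ground state there, and concludes strict positivity on $V_h(\tilde\Omega_i)$ from the fact that this ground state fails the Dirichlet trace condition---exactly your argument, with your explicit switch to the $L^2$ right-hand side being a clean way to make the PSD/PD reduction transparent.
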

\begin{proof}
  The case of \(i \in \tilde{\mathcal{I}}^{\textnormal{i}}\) follows from \cref{lem:pgevp_ngecp_same_groundstate}. For \(i \in \tilde{\mathcal{I}}^{\textnormal{b}}\), we consider the space \(V_{h,*}(\tilde{\Omega}_i) := \big\{u \in H^1(\tilde{\Omega}_i) \, \big|\, u|_\tau \in \mathcal{P}_r \, \forall \tau \in \mathcal{T}_h(\tilde{\Omega}_i)\big\}\). By the arguments of the proof of \cref{lem:pgevp_ngecp_same_groundstate}, the unique lowest eigenpair of \(\tilde{a}_{\sigma,\tilde{\Omega}_i}\) on \(V_{h,*}(\tilde{\Omega}_i)\) is \((\tilde{r}_i E_{\boldsymbol{x}}^{\#} \psi,0)\). However, since \(V_h(\tilde{\Omega}_i) \subset V_{h,*}(\tilde{\Omega}_i)\) and \(\tilde{r}_i E_{\boldsymbol{x}}^{\#} \psi \notin V_h(\tilde{\Omega}_i)\) as \(\tilde{r}_i E_{\boldsymbol{x}}^{\#} \psi > 0\) on \(\partial_{\boldsymbol{x}}\tilde{\Omega}_i\), the lowest eigenvalue of \(\tilde{a}_{\sigma,\tilde{\Omega}_i}\) on \(V_h(\tilde{\Omega}_i)\) must be greater than zero.
\end{proof}
We verify the usual ingredients to show the \(C_0\)-stability in the following. Since \(\tilde{a}_{\sigma,\tilde{\Omega}_i}\) is positive (semi)definite on \(V_h(\tilde{\Omega}_i)\) by \cref{cor:positive_semidefinite} while \(\tilde{b}_{\tilde{\Omega}_i}\) is positive semidefinite as the sum of elementwise non-negative integrals (see \cref{def:gevps}), we define the two induced seminorms \({|v|}_{\tilde{a}_{\sigma,\tilde{\Omega}_i}} := (\tilde{a}_{\sigma,\tilde{\Omega}_i}(v,v))^{1/2}\) and \({|v|}_{\tilde{b}_{\tilde{\Omega}_i}} := (\tilde{b}_{\tilde{\Omega}_i}(v,v))^{1/2}\). We then have the following result.  
\begin{lemma}[{SPSD splitting~\cite[Def.~3.7]{bastianMultilevelSpectralDomain2022}}]\label{lem:spsd_splitting}%
  Assume \labelcref{ass:symm-pot,ass:symm-trian}, then
  \begin{equation}\label{eq:spsd-splitting}
    \sum_{i=1}^N {| \tilde{r}_i v |}^2_{\tilde{a}_{\sigma,\tilde{\Omega}_i}}
    \le
    \tilde{k}_0 {\| v \|}^2_{a_{\sigma}}
    ,\quad \forall v \in V_h
    .
  \end{equation}
\end{lemma}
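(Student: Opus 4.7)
The plan is to recast the left-hand side of \cref{eq:spsd-splitting} as a sum of contributions from the non-overlapping unit cells \(\{\Omega_j^{\#}\}_{j=1}^{L^p}\), reduce everything to a cellwise positive-semidefiniteness statement, and close with a trivial overlap count. Since by \cref{def:periodicNeighborhood} each periodic neighborhood splits disjointly as \(\tilde{\Omega}_i=\bigcup_{j\in\mathcal{I}_i^{\#,\textnormal{n}}}\Omega_j^{\#}\), additivity of \(a_\sigma\) across disjoint subdomains gives \(\tilde{a}_{\sigma,\tilde{\Omega}_i}(\tilde{r}_i v,\tilde{r}_i v)=\sum_{j\in\mathcal{I}_i^{\#,\textnormal{n}}}a_{\sigma,\Omega_j^{\#}}(v|_{\Omega_j^{\#}},v|_{\Omega_j^{\#}})\), and swapping the order of summation yields
\[
  \sum_{i=1}^N \tilde{a}_{\sigma,\tilde{\Omega}_i}(\tilde{r}_i v,\tilde{r}_i v)
  \;=\;
  \sum_{j=1}^{L^p} m_j\,a_{\sigma,\Omega_j^{\#}}(v|_{\Omega_j^{\#}},v|_{\Omega_j^{\#}}),
\]
where \(m_j:=|\{i\in\{1,\dots,N\}\mid\tilde{\Omega}_i\supset\Omega_j^{\#}\}|\le\tilde{k}_0\) by \cref{def:periodic-neighborhood-intersection}. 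Since the same additivity gives \({\|v\|}^2_{a_\sigma}=\sum_{j=1}^{L^p}a_{\sigma,\Omega_j^{\#}}(v|_{\Omega_j^{\#}},v|_{\Omega_j^{\#}})\), the whole claim reduces to the cellwise inequality \(a_{\sigma,\Omega_j^{\#}}(v|_{\Omega_j^{\#}},v|_{\Omega_j^{\#}})\ge 0\) for all \(j\) and all \(v\in V_h\).

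To establish this cellwise bound, I would observe that each unit cell \(\Omega_j^{\#}\) is itself a period-aligned box, so one can apply the reflection argument of \cref{lem:pgevp_ngecp_same_groundstate} (and \cref{cor:positive_semidefinite} for cells whose closure meets \(\partial\Omega_L\)) with \(\Omega_j^{\#}\) playing the role of \(\tilde{\Omega}_i\). The required cell-symmetry of the mesh and potential about the center of each \(\Omega_j^{\#}\) follows from \labelcref{ass:symm-trian,ass:symm-pot} combined with the 1-periodicity of \(V\) (\labelcref{ass:pot-dir-periodic}) and of the mesh (translated copies of \(\mathcal{T}_h(\Omega_1)\) as in \cref{sec:dd-for-eigen}): a 1-periodic object that is symmetric about any one point \(c\) is automatically symmetric about \(c+\frac{1}{2}\), and hence about every integer and every half-integer along the expanding directions, so in particular about the center of every unit cell. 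The symmetric-first-eigenfunction argument of \cref{lem:pgevp_ngecp_same_groundstate} then identifies \((\psi|_{\Omega_j^{\#}},0)\) as the lowest NGEVP eigenpair on \(V_h(\Omega_j^{\#})\), delivering the required cellwise positive semidefiniteness.

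Combining the two steps, the overlap count closes the estimate,
\[
  \sum_{j=1}^{L^p} m_j\,a_{\sigma,\Omega_j^{\#}}(v|_{\Omega_j^{\#}},v|_{\Omega_j^{\#}})
  \;\le\;
  \tilde{k}_0\sum_{j=1}^{L^p}a_{\sigma,\Omega_j^{\#}}(v|_{\Omega_j^{\#}},v|_{\Omega_j^{\#}})
  \;=\;
  \tilde{k}_0\,{\|v\|}^2_{a_\sigma}.
\]
The main obstacle is precisely this cellwise non-negativity: the negative shift \(-\sigma(\cdot,\cdot)_{L^2}\) spoils any elementwise estimate because the integrand of \(a_\sigma\) is in general not pointwise, nor even elementwise, non-negative, so the textbook coercive-case overlap argument of spectral coarse-space theory cannot be used directly. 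It is only the aggregation to the scale of full unit cells, together with the propagation of the cell-symmetry from the \(\tilde{\Omega}_i\)-level down to the \(\Omega_j^{\#}\)-level, that restores the sign needed to run a clean overlap-counting bound.
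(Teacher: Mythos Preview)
Your proposal is correct and follows essentially the same approach as the paper: decompose each \(\tilde{\Omega}_i\) into its unit cells \(\Omega_j^{\#}\), swap the double sum, use that each cell belongs to at most \(\tilde{k}_0\) periodic neighborhoods, and invoke the cellwise positive semidefiniteness of \(\tilde{a}_{\sigma,\Omega_j^{\#}}\) on \(V_h(\Omega_j^{\#})\). The paper handles the last step tersely by pointing to the reflection argument of \cref{lem:pgevp_ngecp_same_groundstate}; your additional observation that 1-periodicity together with symmetry about the center of some \(\tilde{\Omega}_i\) propagates to symmetry about every half-integer (hence about the center of every unit cell) is a welcome clarification that the paper leaves implicit.
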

\begin{proof}
  Let \(v \in V_h\). We use \(\tilde{\Omega}_i := \bigcup_{i \in \mathcal{I}_i^{\#,\textnormal{n}}} \Omega_i^{\#}\) from \cref{def:periodicNeighborhood} to obtain
  \begin{equation}
    \begin{aligned}
      \sum_{i=1}^N {| \tilde{r}_i v |}^2_{\tilde{a}_{\sigma,\tilde{\Omega}_i}}
      &=
      \sum_{i=1}^N \sum_{\{j | \Omega_j^{\#} \subset \tilde{\Omega}_i\}} \tilde{a}_{\sigma,\Omega_j^{\#}}(v|_{\Omega_j^{\#}}, v|_{\Omega_j^{\#}})
      =
      \sum_{j = 1}^{L^p} \sum_{\{i \mid \Omega^{\#}_j \subset \tilde{\Omega}_i\}} \tilde{a}_{\sigma,\Omega_j^{\#}}(v|_{\Omega_j^{\#}}, v|_{\Omega_j^{\#}})
      .
    \end{aligned}
  \end{equation}
  Using that each \(\Omega_j^{\#}\) is contained in at most \(\tilde{k}_0\) periodic neighborhoods \(\tilde{\Omega}_i\) and the positive semidefiniteness of \(\tilde{a}_{\sigma,\Omega_j^{\#}}\) on \(V_h(\Omega_j^{\#})\) (see the proof of \cref{lem:pgevp_ngecp_same_groundstate}) yields
  \begin{align}
    \sum_{j = 1}^{L^p} \sum_{\{i \mid \Omega^{\#}_j \subset \tilde{\Omega}_i\}} \tilde{a}_{\sigma,\Omega_j^{\#}}(v|_{\Omega_j^{\#}}, v|_{\Omega_j^{\#}})
      \le
      \tilde{k}_0 \sum_{j = 1}^{L^p} \tilde{a}_{\sigma,\Omega_j^{\#}}(v|_{\Omega_j^{\#}}, v|_{\Omega_j^{\#}})
      =
      \tilde{k}_0 {\| v \|}^2_{a_{\sigma}}
      .
  \end{align}
\end{proof}
The following result is also essential but requires no change in the proof since it relies on the coercivity of \(a_{\sigma}\) on \(V_h\), which is given in our case.
\begin{lemma}[{strengthened triangle inequality under the square~\cite[Def.~3.6]{bastianMultilevelSpectralDomain2022}~\cite[Lem.~7.9]{doleanIntroductionDomainDecomposition2015}}]\label{lem:strengthened-trian-eq-square}%
  For any collection of \(\{v_i\}_{i=1}^N\) with \(v_i \in V_{h,0}(\Omega_i)\), it holds that  
  \begin{equation}\label{eq:strengthened-triangle-ineq}
    {\left\| \sum_{i=1}^N v_i \right\|}_{a_\sigma}^2
    \le
    N_{\textnormal{c}} \sum_{i=1}^N {\left\| v_i \right\|}_{a_\sigma}^2
    .
  \end{equation}
\end{lemma}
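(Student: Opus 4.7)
My plan is to exploit the finite coloring of the non-overlapping partition from \cref{def:finite-coloring}, which gives the key $a_\sigma$-orthogonality property. Let $c:\{1,\dots,N\}\to\{1,\dots,N_{\textnormal{c}}\}$ be the coloring map. For each color $k\in\{1,\dots,N_{\textnormal{c}}\}$, I would introduce the color-aggregated function $w_k := \sum_{i:\,c(i)=k} v_i \in V_h$ and rewrite the left-hand side as $\sum_{i=1}^N v_i = \sum_{k=1}^{N_{\textnormal{c}}} w_k$. The strategy is then to show two separate inequalities: orthogonality of the $v_i$ within a single color class, followed by a standard Cauchy--Schwarz estimate across the $N_{\textnormal{c}}$ colors.

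For the first step, I would expand $\|w_k\|_{a_\sigma}^2 = \sum_{i,j:\,c(i)=c(j)=k} a_\sigma(v_i,v_j)$. The diagonal terms contribute $\sum_{i:\,c(i)=k}\|v_i\|_{a_\sigma}^2$. For the off-diagonal terms, I have $i\neq j$ with $c(i)=c(j)=k$, and since each $v_i\in V_{h,0}(\Omega_i)$ can be identified with $r_i^T v_i\in V_h$ (and similarly for $v_j$), the finite coloring assumption \cref{eq:coloring-map} yields $a_\sigma(v_i,v_j)=0$. Thus $\|w_k\|_{a_\sigma}^2 = \sum_{i:\,c(i)=k}\|v_i\|_{a_\sigma}^2$ exactly.

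For the second step, I apply the discrete Cauchy--Schwarz inequality to the sum over colors in the inner product induced by $a_\sigma$, which is positive definite on $V_h$ since $\sigma<\lambda_L^{(1)}$ for all finite $L$. This gives
\begin{equation*}
\Big\|\sum_{k=1}^{N_{\textnormal{c}}} w_k\Big\|_{a_\sigma}^2 \le N_{\textnormal{c}} \sum_{k=1}^{N_{\textnormal{c}}} \|w_k\|_{a_\sigma}^2.
\end{equation*}
Chaining this with the per-color identity from the previous step and rewriting the double sum back as a single sum over $i\in\{1,\dots,N\}$ produces the claimed bound.

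There is no real obstacle here — the result is a purely algebraic consequence of the coloring hypothesis and the bilinearity and positive-definiteness of $a_\sigma$, and none of the subtle features of our setup (periodicity, shifted spectrum, anisotropy) enter. The only thing one has to be slightly careful about is the implicit identification between $V_{h,0}(\Omega_i)$ and its image under $r_i^T$ in $V_h$, which is consistent with the convention announced in \cref{ssec:abstract-theory}, so the finite coloring assumption applies verbatim to the $a_\sigma(v_i,v_j)$ terms.
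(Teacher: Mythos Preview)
Your proof is correct and is precisely the standard argument that the paper defers to via its citations; the paper does not spell out a proof of its own but only remarks that the result carries over unchanged because $a_\sigma$ is coercive on $V_h$, which is exactly the positive-definiteness you invoke for the Cauchy--Schwarz step.
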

The remaining part is the local stability, for which we need the following result.
\begin{lemma}[trivial kernel intersection]\label{lem:trivial_kernel_union}%
  Assume \labelcref{ass:symm-trian,ass:symm-pot}, then for the NGEVPs from \cref{def:gevps}, \(\ker \tilde{a}_{\sigma,\tilde{\Omega}_i} \cap \ker \tilde{b}_{\tilde{\Omega}_i} = \{0\}\) for all \(i \in \tilde{\mathcal{I}}^{\textnormal{i}}\).
\end{lemma}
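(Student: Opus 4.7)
The plan is to start from an arbitrary $v \in \ker \tilde{a}_{\sigma,\tilde{\Omega}_i} \cap \ker \tilde{b}_{\tilde{\Omega}_i}$ and deduce $v = 0$ by chaining together the characterization of $\ker \tilde{a}_{\sigma,\tilde{\Omega}_i}$ supplied by \cref{lem:pgevp_ngecp_same_groundstate}, the structure of $\tilde{b}_{\tilde{\Omega}_i}$ from \cref{def:gevps}, and the strict positivity of $\psi$.

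First, I would identify the kernel of $\tilde{a}_{\sigma,\tilde{\Omega}_i}$. Since $\tilde{a}_{\sigma,\tilde{\Omega}_i}$ is positive semidefinite on $V_h(\tilde{\Omega}_i)$ by \cref{cor:positive_semidefinite}, the Cauchy--Schwarz inequality for semi-inner products shows that $\tilde{a}_{\sigma,\tilde{\Omega}_i}(v,v) = 0$ is equivalent to $\tilde{a}_{\sigma,\tilde{\Omega}_i}(v,w) = 0$ for every $w \in V_h(\tilde{\Omega}_i)$. Thus $\ker \tilde{a}_{\sigma,\tilde{\Omega}_i}$ coincides with the $0$-eigenspace of the NGEVP from \cref{def:gevps}, and since \cref{lem:pgevp_ngecp_same_groundstate} asserts that this lowest eigenpair is unique, the kernel is one-dimensional and spanned by $\tilde{r}_i E_{\boldsymbol{x}}^{\#} \psi$. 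So any $v$ in the kernel admits the representation $v = c\,\tilde{r}_i E_{\boldsymbol{x}}^{\#} \psi$ for some scalar $c \in \mathbb{R}$.

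Next, I would exploit $\tilde{b}_{\tilde{\Omega}_i}(v,v) = 0$. Expanding the definition yields $0 = {\|\nabla(\tilde{\chi}_i v)\|}^2_{L^2(\tilde{\Omega}_i)} + {(V \tilde{\chi}_i v, \tilde{\chi}_i v)}_{L^2(\tilde{\Omega}_i)}$, and since $V \ge 0$ (by the constant spectral shift permitted via \cref{ass:pot-ess-bounded}), both non-negative summands must vanish individually. In particular $\nabla(\tilde{\chi}_i v) = 0$ a.e., so $\tilde{\chi}_i v$ is a single global constant on the connected finite-element domain $\tilde{\Omega}_i$. Because $i \in \tilde{\mathcal{I}}^{\textnormal{i}}$, \cref{def:periodicNeighborhood} gives the strict inclusion $\overline{\Omega}_i \subsetneq \overline{\tilde{\Omega}}_i$ for the admissible overlap parameter, so $\tilde{\chi}_i$ vanishes on a set of positive measure in $\tilde{\Omega}_i$ and the constant must be zero, i.e., $\tilde{\chi}_i v \equiv 0$.

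Finally, I would rule out $c \neq 0$ using the strict positivity of $\psi$. The ground state $\psi$ of the cell problem \cref{eq:weak-form-cell-problem} is strictly positive (as already exploited in the proof of \cref{lem:pgevp_ngecp_same_groundstate}), so $\tilde{r}_i E_{\boldsymbol{x}}^{\#} \psi > 0$ pointwise on $\tilde{\Omega}_i$. Together with $\tilde{\chi}_i > 0$ on a non-empty open subset of $\Omega_i$ (forced by the partition-of-unity property $\sum_j \chi_j \equiv 1$ wherever $\Omega_i$ is the sole covering subdomain), the identity $\tilde{\chi}_i v \equiv 0$ leaves $c = 0$ as the only possibility, yielding $v = 0$. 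The main technical hurdle I anticipate is making the step \emph{``$\tilde{\chi}_i$ vanishes on a set of positive measure inside $\tilde{\Omega}_i$''} fully rigorous for every admissible interior subdomain shape and overlap parameter; this is precisely where the restriction $i \in \tilde{\mathcal{I}}^{\textnormal{i}}$ and the minimality built into \cref{def:periodicNeighborhood} are indispensable.
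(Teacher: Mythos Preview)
Your argument is correct and follows essentially the same route as the paper: both first invoke \cref{lem:pgevp_ngecp_same_groundstate} to pin down $\ker \tilde{a}_{\sigma,\tilde{\Omega}_i} = \operatorname{span}\{\tilde{r}_i E_{\boldsymbol{x}}^{\#}\psi\}$, then show that this generator cannot lie in $\ker \tilde{b}_{\tilde{\Omega}_i}$ by exploiting that $\tilde{\chi}_i$ is non-constant and $\psi$ is strictly positive.

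The one place where you work harder than necessary is the ``positive-measure vanishing'' step. Once you have established that $\tilde{\chi}_i v$ is globally constant on the connected domain $\tilde{\Omega}_i$, it suffices that $\tilde{\chi}_i$ vanish at a \emph{single} point (any interior boundary node of $\Omega_i$, which exists by the overlapping construction) to force the constant to be zero; you do not need the strict inclusion $\overline{\Omega}_i \subsetneq \overline{\tilde{\Omega}}_i$, which can in principle fail if $\Omega_i$ happens to be period-aligned. The paper avoids this detour altogether by observing directly that $\tilde{\chi}_i$ non-constant and $\psi>0$ give $\tilde{\chi}_i\,\tilde{r}_i E_{\boldsymbol{x}}^{\#}\psi \notin \operatorname{span}\{\tilde{r}_i E_{\boldsymbol{x}}^{\#}\psi\} \supset \ker \tilde{a}_{0,\tilde{\Omega}_i}$, which immediately rules out $\tilde{b}_{\tilde{\Omega}_i}(\psi,\psi)=0$.
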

\begin{proof}
  Let \(i \in \tilde{\mathcal{I}}^{\textnormal{i}}\). By \cref{lem:pgevp_ngecp_same_groundstate}, we know that, on \(V_h(\tilde{\Omega}_i)\), \(\ker \tilde{a}_{\sigma,\tilde{\Omega}_i} = \operatorname{span}\{\tilde{r}_i E^\#_{\boldsymbol{x}}\psi\}\). In general, \(\ker \tilde{b}_{\sigma,\tilde{\Omega}_i}\) contains at least all \(v \in \ker \tilde{\chi}_i\). However, since \(\tilde{\chi}_i\) is never a constant due to the overlapping decomposition, we have \(\tilde{\chi}_i \tilde{r}_i E^\#_{\boldsymbol{x}}\psi \notin \operatorname{span}\{\tilde{r}_i E^\#_{\boldsymbol{x}}\psi\}\), which implies that \(\ker \tilde{a}_{\sigma, \tilde{\Omega}_i} \cap \ker \tilde{b}_{\tilde{\Omega}_i} = \{0\}\).  
\end{proof}
The stable splitting for the coarse space from \cref{def:periodicCoarseSpace} is achieved with the following projection.
\begin{lemma}[{local stability}]\label{lem:localStabilityOfProjection}%
  Let \(i \in \tilde{\mathcal{I}}^{\textnormal{i}}\) be given and assume \labelcref{ass:symm-pot,ass:symm-trian}, then the local projection operator \(\tilde{\Pi}_{1,i} : V_h(\tilde{\Omega}_i) \to V_h(\tilde{\Omega}_i)\) with
  \begin{equation}
  \label{eq:projectionKernelOfA}
  \begin{aligned}
    v
    \mapsto
    \tilde{\Pi}_{1,i} v
    :=
    \tfrac{\tilde{b}_{\tilde{\Omega}_i}(v,\tilde{r}_i E^{\#}_{\boldsymbol{x}}\psi)}{\tilde{b}_{\tilde{\Omega}_i}(\tilde{r}_i E^{\#}_{\boldsymbol{x}}\psi,\tilde{r}_i E^{\#}_{\boldsymbol{x}}\psi)} \tilde{r}_i E^\#_{\boldsymbol{x}}\psi
    ,
  \end{aligned}
  \end{equation}
    satisfies
  \begin{equation}
  \begin{aligned}
    {|\tilde{\Pi}_{1,i} v|}_{\tilde{a}_{\sigma,\tilde{\Omega}_i}} \le {|v|}_{\tilde{a}_{\sigma,\tilde{\Omega}_i}}
    ,\quad
    {|v - \tilde{\Pi}_{1,i} v|}_{\tilde{a}_{\sigma,\tilde{\Omega}_i}} \le {|v|}_{\tilde{a}_{\sigma,\tilde{\Omega}_i}}
    \quad \forall v \in V_h(\tilde{\Omega}_i)
    ,
    \\
    {|v - \tilde{\Pi}_{1,i} v|}_{\tilde{b}_{\tilde{\Omega}_i}}^2
    \le
    \frac{1}{\tilde{\lambda}_i^{(2)}} {|v - \tilde{\Pi}_{1,i} v|}_{\tilde{a}_{\sigma,\tilde{\Omega}_i}}^2 \quad \forall v \in V_h(\tilde{\Omega}_i)
    ,
  \end{aligned}
  \end{equation}
  with \(\tilde{\lambda}_i^{(2)} > 0\) from \cref{def:gevps}.
\end{lemma}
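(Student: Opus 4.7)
The plan is to exploit that $\tilde{r}_i E^\#_{\boldsymbol{x}}\psi$ lies in $\ker \tilde{a}_{\sigma,\tilde{\Omega}_i}$ by \cref{lem:pgevp_ngecp_same_groundstate}, so that $\tilde{\Pi}_{1,i}$ is nothing but the $\tilde{b}_{\tilde{\Omega}_i}$-orthogonal projection onto this one-dimensional kernel direction. The projection is well-defined, since \cref{lem:trivial_kernel_union} ensures $\tilde{r}_i E^\#_{\boldsymbol{x}}\psi \notin \ker \tilde{b}_{\tilde{\Omega}_i}$, so the denominator in \cref{eq:projectionKernelOfA} is strictly positive.

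The two energy bounds follow directly. Since $\tilde{\Pi}_{1,i} v \in \operatorname{span}\{\tilde{r}_i E^\#_{\boldsymbol{x}}\psi\} \subset \ker \tilde{a}_{\sigma,\tilde{\Omega}_i}$, one has ${|\tilde{\Pi}_{1,i} v|}_{\tilde{a}_{\sigma,\tilde{\Omega}_i}} = 0 \le {|v|}_{\tilde{a}_{\sigma,\tilde{\Omega}_i}}$. For the second bound, I expand the square; the cross term $\tilde{a}_{\sigma,\tilde{\Omega}_i}(v,\tilde{\Pi}_{1,i} v)$ vanishes by the Cauchy--Schwarz inequality for the positive semidefinite form $\tilde{a}_{\sigma,\tilde{\Omega}_i}$ (semidefinite by \cref{cor:positive_semidefinite}), so in fact ${|v - \tilde{\Pi}_{1,i} v|}^2_{\tilde{a}_{\sigma,\tilde{\Omega}_i}} = {|v|}^2_{\tilde{a}_{\sigma,\tilde{\Omega}_i}}$.

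The third estimate is the workhorse since it carries the spectral gap $\tilde{\lambda}_i^{(2)}$ of the NGEVP. By construction, $v - \tilde{\Pi}_{1,i} v$ is $\tilde{b}_{\tilde{\Omega}_i}$-orthogonal to $\tilde{p}_i^{(1)} = \tilde{r}_i E^\#_{\boldsymbol{x}}\psi$. Because of \cref{cor:positive_semidefinite} and the trivial intersection $\ker \tilde{a}_{\sigma,\tilde{\Omega}_i} \cap \ker \tilde{b}_{\tilde{\Omega}_i} = \{0\}$ from \cref{lem:trivial_kernel_union}, the sum $\tilde{a}_{\sigma,\tilde{\Omega}_i} + \tilde{b}_{\tilde{\Omega}_i}$ is positive definite on $V_h(\tilde{\Omega}_i)$, and the generalized eigenproblem \cref{eq:GEVPEquation} admits a $\tilde{b}_{\tilde{\Omega}_i}$-orthonormal family ${\{\tilde{p}_i^{(k)}\}}_k$ with eigenvalues $0 = \tilde{\lambda}_i^{(1)} < \tilde{\lambda}_i^{(2)} \le \dots$, complemented by a basis of $\ker \tilde{b}_{\tilde{\Omega}_i}$. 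Writing $v - \tilde{\Pi}_{1,i} v = \sum_{k \ge 2} c_k \tilde{p}_i^{(k)} + w_0$ with $w_0 \in \ker \tilde{b}_{\tilde{\Omega}_i}$ (the $c_1$-coefficient vanishes by $\tilde{b}$-orthogonality to $\tilde{p}_i^{(1)}$), I obtain ${|v - \tilde{\Pi}_{1,i} v|}^2_{\tilde{b}_{\tilde{\Omega}_i}} = \sum_{k \ge 2} c_k^2$, while using the $\tilde{a}$-orthogonality inherited from simultaneous diagonalization gives ${|v - \tilde{\Pi}_{1,i} v|}^2_{\tilde{a}_{\sigma,\tilde{\Omega}_i}} \ge \sum_{k \ge 2} \tilde{\lambda}_i^{(k)} c_k^2 \ge \tilde{\lambda}_i^{(2)} \sum_{k \ge 2} c_k^2$, which is the claimed bound.

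The main obstacle is justifying this simultaneous diagonalization cleanly when $\tilde{b}_{\tilde{\Omega}_i}$ is only positive semidefinite and $\tilde{a}_{\sigma,\tilde{\Omega}_i}$ has a nontrivial kernel: \cref{lem:trivial_kernel_union} is indispensable here, as it rules out defective modes of the GEVP and guarantees that the first eigenvalue zero is simple so that $\tilde{\lambda}_i^{(2)} > 0$, making the factor $1/\tilde{\lambda}_i^{(2)}$ well-defined.
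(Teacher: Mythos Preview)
Your proof is correct and follows the same approach as the paper. The paper's proof simply invokes \cite[Lem.~3.15]{bastianMultilevelSpectralDomain2022} after checking its hypotheses (positive semidefiniteness via \cref{cor:positive_semidefinite}, trivial kernel intersection via \cref{lem:trivial_kernel_union}, and the identification of the first eigenpair from \cref{lem:pgevp_ngecp_same_groundstate}); you have essentially unpacked the content of that cited lemma, giving a self-contained spectral decomposition argument using the same three ingredients.
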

\begin{proof}
  The proof follows by applying~\cite[Lem.~3.15]{bastianMultilevelSpectralDomain2022}, where we have the trivial kernel intersection satisfied by \cref{lem:trivial_kernel_union}, the first unique eigenfunction given by \(\tilde{p}_i^{(1)} = \tilde{r}_i E^\#_{\boldsymbol{x}}\psi\) corresponding to a zero eigenvalue, and the positive semidefiniteness of \(\tilde{a}_{\sigma,\tilde{\Omega}_i}\) and \(\tilde{b}_{\tilde{\Omega}_i}\) on \(V_h(\tilde{\Omega}_i)\).
\end{proof}
Note that the periodic coarse space only contains components for subdomains \(\Omega_i\), whose periodic neighborhood \(\tilde{\Omega}_i\) does not touch the \(\boldsymbol{x}\)-boundary. No projection must be applied for the remaining subdomains.
The local stability of the projections \cref{eq:projectionKernelOfA} allows for a stable decomposition.
\begin{theorem}\label{thm:stable-splitting}%
  Assume \labelcref{ass:symm-pot,ass:symm-trian}, let \(v \in V_h\), then the splitting defined by
  \begin{equation}\label{eq:stableSplitting}
    v_0 = \sum_{i \in \tilde{\mathcal{I}}^{\textnormal{i}}} \chi_i \tilde{\Pi}_{1,i} \tilde{r}_i v
    \in
    V_0
    ,\quad
    v_i =
    \begin{cases}
      \chi_i (1 - \tilde{\Pi}_{1,i}) \tilde{r}_i v & i \in \tilde{\mathcal{I}}^{\textnormal{i}} \\
      \chi_i \tilde{r}_i v & i \in \tilde{\mathcal{I}}^{\textnormal{b}}
    \end{cases}
    \in
    V_{h,0}(\Omega_i)
    ,
  \end{equation}
  is \(C_0\)-stable with \(C_0 = 2 + C_1 \tilde{k}_0 (2 N_{\textnormal{c}} + 1)\), \(C_1 := (\min{\{\min_{i \in \tilde{\mathcal{I}}^{\textnormal{i}}} \tilde{\lambda}_i^{(2)}, \min_{i \in \tilde{\mathcal{I}}^{\textnormal{b}}} \tilde{\lambda}_i^{(1)}\}})^{-1}\).  
\end{theorem}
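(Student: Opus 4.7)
The plan is to verify the splitting and then control the energies of each component separately. First, I would check that the family in \cref{eq:stableSplitting} is well-posed and reconstructs $v$: by construction each summand $\chi_i \tilde{\Pi}_{1,i} \tilde{r}_i v$ is proportional to $\chi_i \tilde{r}_i E^\#_{\boldsymbol{x}} \psi$ and hence lies in $V_0$, while $v_i \in V_{h,0}(\Omega_i)$ follows from $\operatorname{supp}(\chi_i) \subset \overline{\Omega}_i$ via \cref{def:partitionOfUnity}. The telescoping identity $\chi_i \tilde{r}_i v = \chi_i \tilde{\Pi}_{1,i} \tilde{r}_i v + \chi_i (1 - \tilde{\Pi}_{1,i}) \tilde{r}_i v$ on interior periodic neighborhoods, combined with the partition-of-unity identity $\sum_{i=1}^{N} \chi_i \equiv 1$, then yields $v = v_0 + \sum_{i=1}^N v_i$.

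The central pointwise estimate is
\[
  {\|\chi_i u\|}_{a_\sigma}^2 \le {|u|}_{\tilde{b}_{\tilde{\Omega}_i}}^2 \quad \forall u \in V_h(\tilde{\Omega}_i),
\]
which I obtain by recognizing that the zero-extension of $\tilde{\chi}_i u$ from $\tilde{\Omega}_i$ to $\Omega_L$ coincides with $\chi_i u$, plugging this into the definition $\tilde{b}_{\tilde{\Omega}_i}(u,u) = \tilde{a}_{0,\tilde{\Omega}_i}(\tilde{\chi}_i u, \tilde{\chi}_i u)$, and using $\sigma \ge 0$ (guaranteed by \labelcref{ass:pot-ess-bounded} with $V \ge 0$) to drop the $-\sigma {\|\chi_i u\|}_{L^2}^2$ contribution.

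Using this estimate, for $i \in \tilde{\mathcal{I}}^{\textnormal{i}}$ I combine the second and third bounds of \cref{lem:localStabilityOfProjection} to obtain ${\|v_i\|}_{a_\sigma}^2 \le {|(1-\tilde{\Pi}_{1,i})\tilde{r}_i v|}_{\tilde{b}_{\tilde{\Omega}_i}}^2 \le (\tilde{\lambda}_i^{(2)})^{-1} {|\tilde{r}_i v|}_{\tilde{a}_{\sigma,\tilde{\Omega}_i}}^2$. For $i \in \tilde{\mathcal{I}}^{\textnormal{b}}$ the projection is absent, and the positive-definiteness of $\tilde{a}_{\sigma,\tilde{\Omega}_i}$ from \cref{cor:positive_semidefinite} together with the Rayleigh-quotient characterization gives ${\|v_i\|}_{a_\sigma}^2 \le {|\tilde{r}_i v|}_{\tilde{b}_{\tilde{\Omega}_i}}^2 \le (\tilde{\lambda}_i^{(1)})^{-1} {|\tilde{r}_i v|}_{\tilde{a}_{\sigma,\tilde{\Omega}_i}}^2$. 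Summing over $i$ and invoking the SPSD splitting of \cref{lem:spsd_splitting} yields $\sum_{i=1}^N {\|v_i\|}_{a_\sigma}^2 \le C_1 \tilde{k}_0 {\|v\|}_{a_\sigma}^2$.

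Finally, writing $v_0 = v - \sum_{i=1}^N v_i$ and applying the triangle inequality under the square followed by \cref{lem:strengthened-trian-eq-square} yields ${\|v_0\|}_{a_\sigma}^2 \le 2{\|v\|}_{a_\sigma}^2 + 2 N_{\textnormal{c}} \sum_{i=1}^N {\|v_i\|}_{a_\sigma}^2 \le (2 + 2 N_{\textnormal{c}} C_1 \tilde{k}_0) {\|v\|}_{a_\sigma}^2$. Adding the coarse and subdomain contributions gives the asserted constant $C_0 = 2 + C_1 \tilde{k}_0 (2 N_{\textnormal{c}} + 1)$. The main subtlety I anticipate is the pointwise bound ${\|\chi_i u\|}_{a_\sigma}^2 \le {|u|}_{\tilde{b}_{\tilde{\Omega}_i}}^2$: standard GenEO arguments identify ${\|\chi_i u\|}_{a_\sigma}$ with the norm induced by the right-hand bilinear form of the GEVP, but here $a_\sigma$ is only asymptotically coercive. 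This is precisely the reason the right-hand side of \cref{eq:GEVPEquation} is generated without shift by $\tilde{a}_0$ rather than $\tilde{a}_\sigma$, so that non-negativity of $\sigma$ buys us the inequality rather than an equality.
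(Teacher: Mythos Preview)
Your proposal is correct and follows essentially the same route as the paper's proof: verify consistency via the partition of unity, establish the key bound ${\|\chi_i u\|}_{a_\sigma}^2 \le {|u|}_{\tilde{b}_{\tilde{\Omega}_i}}^2$ by dropping the $-\sigma{\|\cdot\|}_{L^2}^2$ term (using $\sigma\ge 0$), apply \cref{lem:localStabilityOfProjection} for interior neighborhoods and the Rayleigh-quotient bound with $\tilde{\lambda}_i^{(1)}$ for boundary ones, sum via \cref{lem:spsd_splitting}, and finish with $v_0 = v - \sum_i v_i$ together with \cref{lem:strengthened-trian-eq-square}. Your closing remark on why $\tilde{b}_{\tilde{\Omega}_i}$ is generated by $\tilde{a}_0$ rather than $\tilde{a}_\sigma$ is exactly the subtlety the paper exploits.
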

\begin{proof}
  First, the splitting is consistent since, for all \(v \in V_h\), we have
  \(v_0 + \sum_{i=1}^N v_i
  =
  \sum_{i \in \tilde{\mathcal{I}}^{\textnormal{i}}} \chi_i \tilde{r}_i v
  +
  \sum_{i \in \tilde{\mathcal{I}}^{\textnormal{b}}} \chi_i \tilde{r}_i v
  =
  \sum_{i=1}^N \chi_i \tilde{r}_i v
  =
  \sum_{i=1}^N \chi_i r_i v
  =
  v\)
  using that \(\tilde{r}_i = r_i\) on \(\operatorname{supp}(\chi_i)\). We have, using the strategy of~\cite[p13]{bastianMultilevelSpectralDomain2022}, that the splitting is locally stable since \(\forall i \in \tilde{\mathcal{I}}^{\textnormal{i}}\), the local stability (\cref{lem:localStabilityOfProjection}), and the definition of \(\tilde{b}_{\tilde{\Omega}_i}\) yields
  \begin{equation}\label{eq:local_energy_bound_inner}
  \begin{aligned}
    {\|v_i\|}_{a_{\sigma}}^2
    &=
    {|\chi_i (1 - \tilde{\Pi}_{1,i}) \tilde{r}_i v|}_{a_{\sigma,\Omega_i}}^2
    =
    {|\tilde{\chi}_i (1 - \tilde{\Pi}_{1,i}) \tilde{r}_i v|}_{\tilde{a}_{\sigma,\tilde{\Omega}_i}}^2
    \\
    &=
    {|\tilde{\chi}_i (1 - \tilde{\Pi}_{1,i}) \tilde{r}_i v|}_{\tilde{a}_{0,\tilde{\Omega}_i}}^2
    -
    \sigma {\|\tilde{\chi}_i (1 - \tilde{\Pi}_{1,i}) \tilde{r}_i v\|}_{L^2(\tilde{\Omega}_i)}^2
    \le
    {|\tilde{\chi}_i (1 - \tilde{\Pi}_{1,i}) \tilde{r}_i v|}_{\tilde{a}_{0,\tilde{\Omega}_i}}^2
    \\
    &=
    {|(1 - \tilde{\Pi}_{1,i}) \tilde{r}_i v|}_{\tilde{b}_{\tilde{\Omega}_i}}^2
    \le
    \frac{1}{\tilde{\lambda}_i^{(2)}} {|(1 - \tilde{\Pi}_{1,i}) \tilde{r}_i v|}_{\tilde{a}_{\sigma,\tilde{\Omega}_i}}^2
    \le
    \frac{1}{\tilde{\lambda}_i^{(2)}} {|\tilde{r}_i v|}_{\tilde{a}_{\sigma,\tilde{\Omega}_i}}^2
    .
  \end{aligned}
  \end{equation}
  For all boundary domains with \(i \in \tilde{\mathcal{I}}^{\textnormal{b}}\), we apply~\cite[Lem.~3.15]{bastianMultilevelSpectralDomain2022} using the projection on zero to obtain
  \begin{equation}\label{eq:local_energy_bound_outer}
  \begin{aligned}
    {\|v_i\|}_{a_{\sigma}}^2
    &=
    {|\chi_i \tilde{r}_i v|}_{a_{\sigma,\Omega_i}}^2
    =
    {|\tilde{\chi}_i \tilde{r}_i v|}_{\tilde{a}_{\sigma,\tilde{\Omega}_i}}^2
    =
    {|\tilde{\chi}_i \tilde{r}_i v|}_{\tilde{a}_{0,\tilde{\Omega}_i}}^2
    -
    \sigma {\|\tilde{\chi}_i \tilde{r}_i v\|}_{L^2(\tilde{\Omega}_i)}^2
    \\
    &\le
    {|\tilde{\chi}_i \tilde{r}_i v|}_{\tilde{a}_{0,\tilde{\Omega}_i}}^2
    =
    {| \tilde{r}_i v|}_{\tilde{b}_{\tilde{\Omega}_i}}^2
    \le
    \frac{1}{\tilde{\lambda}_i^{(1)}} {| \tilde{r}_i v|}_{\tilde{a}_{\sigma,\tilde{\Omega}_i}}^2
    .
  \end{aligned}
  \end{equation}
  Similar to~\cite[Lemma 2.9]{spillaneAbstractRobustCoarse2014}, using \cref{eq:local_energy_bound_inner,eq:local_energy_bound_outer}, the SPSD splitting \cref{eq:spsd-splitting}, and \(\tilde{\mathcal{I}}^{\textnormal{i}} \cap \tilde{\mathcal{I}}^{\textnormal{b}} = \emptyset\) yields
  \begin{equation}\label{eq:sum-of-local-energies-upper-bound}
  \begin{aligned}
    \sum_{i=1}^N {\|v_i\|}_{a_{\sigma}}^2
    =
    \sum_{i \in \tilde{\mathcal{I}}^{\textnormal{i}}} {\|v_i\|}_{a_{\sigma}}^2
    +
    \sum_{i \in \tilde{\mathcal{I}}^{\textnormal{b}}} {\|v_i\|}_{a_{\sigma}}^2
    \le
    \sum_{i \in \tilde{\mathcal{I}}^{\textnormal{i}}} \frac{1}{\tilde{\lambda}_i^{(2)}} {|\tilde{r}_i v|}_{a_{\sigma,\tilde{\Omega}_i}}^2
    +
    \sum_{i \in \tilde{\mathcal{I}}^{\textnormal{b}}} \frac{1}{\tilde{\lambda}_i^{(1)}} {|\tilde{r}_i v|}_{a_{\sigma,\tilde{\Omega}_i}}^2
    \\
    \le
    \frac{1}{{\displaystyle \min_{i \in \tilde{\mathcal{I}}^{\textnormal{i}}} \tilde{\lambda}_i^{(2)}}} \sum_{i \in \tilde{\mathcal{I}}^{\textnormal{i}}} {|\tilde{r}_i v|}_{a_{\sigma,\tilde{\Omega}_i}}^2
    +
    \frac{1}{{\displaystyle \min_{i \in \tilde{\mathcal{I}}^{\textnormal{b}}} \tilde{\lambda}_i^{(1)}}} \sum_{i \in \tilde{\mathcal{I}}^{\textnormal{b}}} {|\tilde{r}_i v|}_{a_{\sigma,\tilde{\Omega}_i}}^2
    \le
    C_1 \sum_{i=1}^N {|\tilde{r}_i v|}_{a_{\sigma,\tilde{\Omega}_i}}^2
    \le
    C_1 \tilde{k}_0 {\|v\|}_{a_{\sigma}}^2
    ,
  \end{aligned}
  \end{equation}
  with \(C_1 := (\min{\{\min_{i \in \tilde{\mathcal{I}}^{\textnormal{i}}} \tilde{\lambda}_i^{(2)}, \min_{i \in \tilde{\mathcal{I}}^{\textnormal{b}}} \tilde{\lambda}_i^{(1)}\}})^{-1}\). Then, we continue with  
  \begingroup{}%
  \allowdisplaybreaks{}%
  \begin{align}\label{eq:upper-bound-coarse-space-component}
    {\|v_0\|}_{a_{\sigma}}^2
    & =
    {\left\|v - \sum_{i=1}^N v_i\right\|}_{a_{\sigma}}^2
    \le
    2 {\|v\|}_{a_{\sigma}}^2 + 2 {\left\|\sum_{i=1}^N v_i\right\|}_{a_{\sigma}}^2
    \le
    2 {\|v\|}_{a_{\sigma}}^2 + 2 N_{\textnormal{c}} \sum_{i=1}^N {\left\|v_i\right\|}_{a_{\sigma}}^2
    \\
    & \le
    2 (1 + N_{\textnormal{c}} C_1 \tilde{k}_0) {\|v\|}_{a_{\sigma}}^2
    \notag
    ,
  \end{align}
  \endgroup{}%
  using the relation \cref{eq:sum-of-local-energies-upper-bound} and the strengthened triangle inequality (\cref{lem:strengthened-trian-eq-square}). Combining the relations \cref{eq:sum-of-local-energies-upper-bound,eq:upper-bound-coarse-space-component} yields the \(C_0\)-stability as
  \begin{align}
    \sum_{i=1}^N {\|v_i\|}_{a_{\sigma}}^2 + {\|v_0\|}_{a_{\sigma}}^2
    \le
    (2 + C_1 \tilde{k}_0 (2 N_{\textnormal{c}} + 1)) {\|v\|}_{a_{\sigma}}^2
    .
  \end{align}
\end{proof}
With the \(C_0\)-stable decomposition, we finally obtain the following.
\begin{theorem}%
  Assume \labelcref{ass:symm-pot,ass:symm-trian}, let \(V_0\) be given by \cref{def:periodicCoarseSpace}, \(\boldsymbol{M}_{\textnormal{AS},2}^{-1}\) by \cref{eq:twoLevelASMPreconditioner}, \(N_{\textnormal{c}}\) as in \cref{def:finite-coloring}, and \(C_0\) as in \cref{thm:stable-splitting}. Then, the two-level additive Schwarz method satisfies the condition number bound
  \begin{equation}\label{eq:conditionNumberBound}
    \kappa(\boldsymbol{M}_{\textnormal{AS},2}^{-1} \boldsymbol{A}_\sigma) \le  C_0^2 (N_{\textnormal{c}} + 1).
  \end{equation}
\end{theorem}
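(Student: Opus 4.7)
The plan is to invoke the standard abstract two-level additive Schwarz framework (see, e.g., \cite[Cor.~5.12]{doleanIntroductionDomainDecomposition2015}): once three ingredients are in place, the condition number bound follows by routine manipulations. These ingredients are (a) invertibility of the local matrices $\boldsymbol{A}_{\sigma,i}$ and of the coarse matrix $\boldsymbol{A}_{\sigma,0}$, (b) a $C_0$-stable decomposition of every $v \in V_h$ in the sense of \cref{def:stable-decomp}, and (c) a strengthened triangle inequality whose multiplicity constant grows as $N_{\textnormal{c}}+1$ once the coarse correction is counted as an additional color.

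Ingredient (b) is exactly \cref{thm:stable-splitting} with the explicit $C_0$ stated there. Ingredient (c) is \cref{lem:strengthened-trian-eq-square} together with the observation that adding $V_0$ contributes at most one additional color to the coloring of \cref{def:finite-coloring}. For ingredient (a), I would argue that $a_{\sigma,\Omega_i}$ is positive definite on $V_{h,0}(\Omega_i)$ because the full Dirichlet trace on $\partial \Omega_i$ forces the Poincar\'e--Friedrichs constant to strictly exceed $\sigma$ on any finite subdomain. Invertibility of the coarse matrix then follows from \cref{lem:trivial_kernel_union}: the non-constant partition-of-unity cutoffs push $\chi_i E_{\boldsymbol{x}}^{\#}\psi$ out of the kernel of $\tilde{a}_{\sigma,\tilde{\Omega}_i}$ identified in \cref{lem:pgevp_ngecp_same_groundstate}, so the Gram-type matrix $\boldsymbol{A}_{\sigma,0}$ is regular.

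The combination step I would carry out in the textbook way. Introduce the $a_\sigma$-orthogonal projections $P_i := \boldsymbol{R}_i^T \boldsymbol{A}_{\sigma,i}^{-1} \boldsymbol{R}_i^{} \boldsymbol{A}_{\sigma}$ so that $\boldsymbol{M}_{\textnormal{AS},2}^{-1} \boldsymbol{A}_{\sigma} = \sum_{i=0}^N P_i$. For the upper eigenvalue, I regroup the $P_i$'s by color, apply Cauchy--Schwarz across the $N_{\textnormal{c}}+1$ groups, use the projection identity $\sum_i \|P_i v\|_{a_\sigma}^2 = a_\sigma(\sum_i P_i v, v)$, and divide by the $a_\sigma$-norm of $\sum_i P_i v$ to obtain $\lambda_{\max} \le N_{\textnormal{c}}+1$. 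For the lower eigenvalue, I invoke Lions' lemma: take the stable splitting $v = v_0 + \sum v_i$ from \cref{thm:stable-splitting}, expand $a_\sigma(v,v) = \sum_i a_\sigma(v_i, v) = \sum_i a_\sigma(v_i, P_i v)$ via the projection property, and apply Cauchy--Schwarz twice together with \cref{def:stable-decomp} to extract the factor involving $C_0$. Multiplying the two eigenvalue estimates produces the stated bound, up to the customary loss that causes $C_0$ to appear squared (matching the form in the quoted corollary).

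The only place I expect nontrivial work is ingredient (a) for the coarse matrix: since $\sigma$ has been chosen precisely to drive the lowest eigenvalue of $\boldsymbol{A}_{\sigma}$ toward zero, one must explicitly exclude that the coarse basis collapses into the emerging near-kernel. The partition-of-unity weighting, combined with \cref{lem:trivial_kernel_union}, delivers exactly this separation and is the reason for choosing $\chi_i \tilde{r}_i E_{\boldsymbol{x}}^{\#}\psi$ in \cref{def:periodicCoarseSpace} rather than unweighted indicator-based generators. Everything else is bookkeeping of constants traceable back to \cref{thm:stable-splitting} and \cref{lem:strengthened-trian-eq-square}.
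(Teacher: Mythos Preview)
Your proposal is correct and follows essentially the same route as the paper: the paper's own proof consists of a single sentence invoking \cite[Cor.~5.12]{doleanIntroductionDomainDecomposition2015} together with \cref{thm:stable-splitting}, and your write-up simply unpacks what that corollary asserts (the coloring bound for $\lambda_{\max}$ and Lions' lemma for $\lambda_{\min}$). Your additional remarks on the invertibility of $\boldsymbol{A}_{\sigma,0}$ are not needed for the proof proper but are consistent with what the paper establishes earlier via \cref{lem:pgevp_ngecp_same_groundstate} and \cref{lem:trivial_kernel_union}.
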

\begin{proof}
  The proof follows from~\cite[Cor.~5.12]{doleanIntroductionDomainDecomposition2015} and \cref{thm:stable-splitting}.
\end{proof}
\begin{remark}
  The condition number bound \cref{eq:conditionNumberBound} becomes \(L\)-uniform for the natural decomposition using the \(L^p\) shifted unit-cells \(\{\Omega_i^{\#}\}_{i=1}^N\) from \cref{eq:periodic-cells-definition}, since \(N_{\textnormal{c}}\), \(\tilde{k}_0\), and \(C_1\) are \(L\)-invariant in that case.  
  While our investigation is centered on demonstrating \(L\)-robustness, it is essential to note that \(C_0\) is still affected by other geometric factors and the selected partition of unity \(\{\chi_i\}_{i=1}^N\). As shown in \cref{sec:parameter_study}, for distance-based \(\chi_i\), the gradients \(\{\nabla \chi_i\}_{i=1}^N\) dependent on the mesh size \(h\) and the overlap \(\delta\), and influence the local eigenvalues and, consequently, the rate of convergence.  
  Although it seems intuitive to incorporate more than just the first eigenfunction to reduce \(C_1\), it is essential to note that in practical applications, we do not need to solve the local GEVPs since the first eigenfunctions are given a priori (see \cref{ssec:coarse-space}).  
\end{remark}

\section{Numerical experiments}\label{sec:numericalExperiments}
In this section, we evaluate the performance of the proposed PerFact preconditioner from \cref{def:algebraic-perfact-coarse-space} and provide numerical evidence for the theoretical results from \cref{sec:analysis}. We choose the \texttt{Gridap} finite element framework~\cite{badiaGridapExtensibleFinite2020} within the Julia~\cite{bezansonJuliaFreshApproach2017} language due to its high-level interface to directly specify weak forms. In all the following tests, we use the CG and GMRES implementations of the \texttt{IterativeSolvers.jl}\footnote{\url{https://github.com/JuliaLinearAlgebra/IterativeSolvers.jl}, version \texttt{v0.9.3}.} package. We mainly compare iteration numbers as a critical factor for scalability. Though actual runtimes vary with implementation, the method inherits typical DD scalability concerning computational complexity. The PerFact coarse space adds an \(\mathcal{O}(1)\) setup cost (since the unit cell is \(L\)-independent) and a slight overhead from factorizing the coarse matrix \(\boldsymbol{A}_{\sigma,0}\) (which is negligible if \(N \ll n\)). For reproducibility, the source code and all metadata are available at~\cite{theisenDdEigenLabJlDomainDecomposition2024}.  

\subsection{Source problem with the two-level preconditioner}
We first evaluate the performance of the coarse space for the solution of a shifted Schrödinger-type source problem on a two-dimensional rectangle \(\Omega_L = (0,L) \times (0,1)\) with a constant source term \(f(x,y) = 1\): Find \(u_h \in V_h \subset H^1_0(\Omega_L)\) such that
\begin{equation}\label{eq:test_source_problem}
  a_{\sigma}(u_h,v_h) = {(f, v_h)}_{L^2(\Omega_L)}, \quad \forall v_h \in V_h
  ,
\end{equation}
where \(a_{\sigma}\) is given by \cref{eq:shifted_bilinear_form}. The potential is \(V(x,y) = 10^2 (\sin^2{(\pi x)})^2 (\sin^2{(\pi y)})^2\) fulfilling \labelcref{ass:pot-dir-periodic,ass:pot-ess-bounded,ass:symm-pot,ass:symm-trian}.  
The calculations use \(\mathbb{Q}_1\) finite elements on a regular cartesian grid with mesh size \(h=1/30\). The shift \(\sigma \approx 19.32644\) and the periodic solution \(\psi_h\) are obtained on the unit cell of the same mesh with periodic boundary conditions in the \(x\)-direction, according to \cref{eq:weak-form-cell-problem}.
\par
Thus, this isolated setup of \cref{eq:test_source_problem} simulates one application of the QOSI preconditioner \(\boldsymbol{A}_{\sigma}^{-1}\) applied to the right-hand-side vector \(\boldsymbol{f}\) generated by \(f(x,y)\) and only tests the linear system solution rather than the complete eigenvalue algorithm. For each domain size \(L\), we choose a non-overlapping, structured decomposition into \(N=L\) subdomains, \(\{\Omega_i'\}_{i=1}^L\) by using \(\Omega_i' = (i-1,i) \times (0,1)\). Extending each domain by one layer of elements yields the overlapping \(\{\Omega_i\}_{i=1}^L\), thus covering \((i-1-h,i+h) \times (0,1)\) for \(i=2,\dots,L-1\) and the \(x\)-boundary domains accordingly.   
The two-level preconditioner \(\boldsymbol{M}_{\textnormal{AS},2}^{-1}\) is then used within the CG method, which uses an initial guess of one and the relative residual condition \(\texttt{rTOL} = 10^{-8}\), where \({\|\boldsymbol{r}_k\|}_2 \le \texttt{rTOL} {\|\boldsymbol{r}_0\|}_2\) to determine the convergence.

\subsubsection{Convergence rate comparison}
\cref{fig:cg_convergence_with_coarse_space} compares the resulting relative residuals within the convergence history between the one-level and two-level PerFact preconditioner. For \(\boldsymbol{M}_{\textnormal{AS},1}^{-1}\), the drastic increase in iteration numbers for \(L \to \infty\) is visible, while the coarse space within \(\boldsymbol{M}_{\textnormal{AS},2}^{-1}\) leads to a bounded convergence rate, which is independent of \(L\), confirming our theory from \cref{sec:analysis}.
\par
\begin{figure}[t]%
  \centering%
  \newcommand{\datapath}{./plots/iteration-numbers/data}%



\begin{tikzpicture}

  \def\LxList{2,4,8,16,32,64,128,256}

  \definecolor{color1}{rgb}{0,    ,0.4470,0.7410}
  \definecolor{color2}{rgb}{0.8500,0.3250,0.0980}
  \definecolor{color3}{rgb}{0.9290,0.6940,0.1250}
  \definecolor{color4}{rgb}{0.4940,0.1840,0.5560}
  \definecolor{color5}{rgb}{0.4660,0.6740,0.1880}
  \definecolor{color6}{rgb}{0.3010,0.7450,0.9330}
  \definecolor{color7}{rgb}{0.6350,0.0780,0.1840}
  \pgfplotscreateplotcyclelist{matlab}{
    color1,every mark/.append style={solid},mark=*\\
    color2,every mark/.append style={solid},mark=square*\\
    color3,every mark/.append style={solid},mark=triangle*\\
    color4,every mark/.append style={solid},mark=halfsquare*\\
    color5,every mark/.append style={solid},mark=pentagon*\\
    color6,every mark/.append style={solid},mark=halfcircle*\\
    color7,every mark/.append style={solid,rotate=180},mark=halfdiamond*\\
    color1,every mark/.append style={solid},mark=diamond*\\
    color2,every mark/.append style={solid},mark=halfsquare right*\\
    color3,every mark/.append style={solid},mark=halfsquare left*\\
  }

\begin{groupplot}[
  group style={
    group size= 2 by 1,
    horizontal sep=0.15cm,
    vertical sep=0.45cm,
  },
  ymin=1E-8,
  ymax=10E0,
  xmode=linear,
  ymode=log,
  legend style={
    font=\normalsize,
    nodes={
      scale=0.75,
      transform shape
    }
  },
  legend pos=south west,
  legend columns=9,
  height=5.0cm,
  width=7.15cm,
  cycle list name=matlab,
  ticklabel style = {font=\footnotesize},
  grid=major,
  xlabel={iteration $k$},
  xmin=-1,
  xmax=51,
]

  \nextgroupplot[
    title={CG + AS1},
    legend to name=legend_iteration_numbers_123,
    mark size=1.5pt,
  ]
    \foreach \Lx in \LxList {
      \addplot+[
      ] table [
        x index=0,
        y index=1,
        col sep=comma,
      ] {\datapath/cg_asm1_30-1-\Lx.csv};
      \addlegendentryexpanded{$L=\Lx$}
    }

  \nextgroupplot[
    title={CG + AS2},
    yticklabels={},
    ylabel={rel.~res.~norm ${\|\boldsymbol{r}_k\|}_2/{\|\boldsymbol{r}_0\|}_2$},
    yticklabel pos=right,
    mark size=1.5pt,
  ]
  \foreach \Lx in \LxList {
    \addplot+[
    ] table [
      x index=0,
      y index=1,
      col sep=comma,
    ] {\datapath/cg_asm2_30-1-\Lx.csv};
  }

\end{groupplot}

\coordinate (c3) at ($(group c1r1)!.5!(group c2r1)$);
\node[below] at (c3 |- current bounding box.south)
  {\pgfplotslegendfromname{legend_iteration_numbers_123}};

\end{tikzpicture}

  \caption{%
    CG residual norms for varying domain length \(L\) using the AS preconditioner with no (left) and the PerFact coarse space (right).%
  }\label{fig:cg_convergence_with_coarse_space}%
\end{figure}
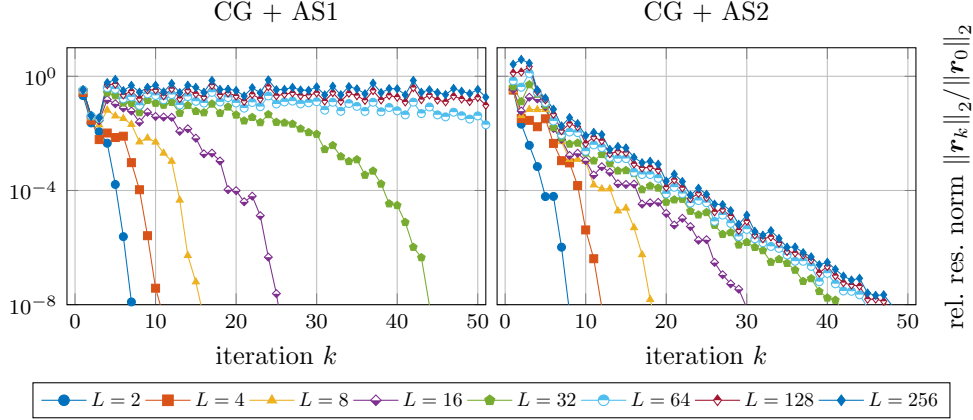%

\subsubsection{Parameter study}\label{sec:parameter_study}
We also use this test case to investigate the dependency of the methods for changing mesh sizes \(h \in \{1/10,1/20,1/30\}\) and overlap layer thickness \(\delta \in \{1,2,3\}\) in units of elements. Again, we compare the CG method iteration numbers for the one-level and the PerFact two-level DD preconditioner using the same tolerance, starting vector, and problem setup. With the \(V_h\)-interpolation operator, \(I_h\), we use the distance-based (see, e.g.,~\cite[Lem.~5.7]{doleanIntroductionDomainDecomposition2015}) partition of unity \(\chi_i(\boldsymbol{z}) = I_h \circ (d_i(\boldsymbol{z})/ \sum_{i=1}^N d_i(\boldsymbol{z}))\) with \(d_i(\boldsymbol{z}) := \operatorname{dist}(\boldsymbol{z}, \partial \Omega_i \setminus \partial \Omega) \mathbf{1}_{\Omega_i}(\boldsymbol{z})\). In our case, the partition of unity resembles linear blending between two neighboring subdomains and allows keeping the gradients \(\nabla \chi_i\) constant and thus minimal.
\par
In \cref{tab:parameter_study_rectangle}, the theoretical results are confirmed for all \(\delta\) and \(h\) since the PerFact coarse space is robust w.r.t.~\(L\). For moderate \(L < 32\), both methods have comparable performance with the one-level preconditioner, even having some iterations less. This behavior is expected since the coarse space is applied additively and not multiplicatively (deflated) to resemble the case of the analysis. With the distance-based PU, increasing the overlap thickness \(\delta\) is beneficial. Interestingly, keeping the ratio between subdomain size and overlap thickness \(H/(\delta h)\) constant, i.e., \((h,\delta) \in \{(1/10,1),(1/20,2),(1/30,3)\}\), does not change the convergence rate for a fixed \(L\). For \(L,\delta=\operatorname{const}\) and \(h \to 0\), the iterations increase due to a reduced overlap thickness \(\delta h\). Although this is a classical observation for, e.g., the Poisson problem, it is not directly clear for the quasi-optimally shifted Schrödinger operator since the shift \(\sigma\) also varies when \(h\) varies \textendash\ which, in other words, means that we compare different operators in that case.  
\par
\begin{table}[t]%
  \footnotesize%
  \centering%
  \caption{%
  CG iteration numbers for relative residuals to converge to \(\texttt{rTOL} = 10^{-8}\) using no coarse space (first number) and the PerFact coarse space (second number).%
  }\label{tab:parameter_study_rectangle}%
  \resizebox{\columnwidth}{!}{%
  \pgfplotstabletypeset[
    col sep = comma,
    every head row/.style={
      output empty row,
      before row={
        \toprule
        & \multicolumn{3}{c}{\(h = 1/10\)} & \multicolumn{3}{c}{\(h = 1/20\)} & \multicolumn{3}{c}{\(h = 1/30\)} \\
        \cmidrule(lr){2-4} \cmidrule(lr){5-7} \cmidrule(lr){8-10}
        \(L\) & \(\delta = 1\) & \(\delta = 2\) & \(\delta = 3\) & \(\delta = 1\) & \(\delta = 2\) & \(\delta = 3\) & \(\delta = 1\) & \(\delta = 2\) & \(\delta = 3\) \\
      },
      after row=\midrule
    },
    every last row/.style={after row=\bottomrule},
    every column/.style={string type,column type = {l}},
    columns/Lx/.style={string type,column type = {r}},
  ]%
  {./tables/data-iteration-numbers.csv}%
  }
\end{table}

\subsection{Linear chain model with Coulomb potential}
The initial hyperbox setup of \(\Omega_L\) from \cref{eq:geometry-description} can be generalized to more complex domains \(\hat{\Omega}\), which are generated by translated copies of a unit cell. Then, there exists an enclosing box such that \(\hat{\Omega} \subset \Omega_L\) for some \(L\). Thus, it is possible to consider a hypothetical penalty potential \(\hat{V}(\boldsymbol{z}; V, a) := V(\boldsymbol{z}) + a \mathbf{1}_{\hat{\Omega}^{\textnormal{c}}}(\boldsymbol{z})\) and take \(a \to \infty\). Since the penalty is only applied in the complement \(\hat{\Omega}^{\textnormal{c}} := \Omega_L \setminus \hat{\Omega}\) when the indicator function \(\mathbf{1}_{\hat{\Omega}^{\textnormal{c}}}\) is nonzero, the resulting eigenfunctions of \cref{eq:schroedingerEquation} converge to the eigenfunctions of the problem only solved in \(\hat{\Omega}\) with zero Dirichlet conditions on \(\partial \hat{\Omega}\) (see the \emph{barrier principle} in~\cite{stammQuasiOptimalFactorizationPreconditioner2022}).

\subsubsection{Model description}
With that, we consider a linear chain of \(N\) particles, a toy model inspired by the cumulene configuration of the ideal carbyne~\cite{zhaoCarbonNanowireMade2003}. Let \(\{\boldsymbol{c}_i\}_{i=1}^N\) be a set of center positions with \(\boldsymbol{c}_i := (R+2(i-1)r,0)^T\) for some \(R > r > 0\), to construct a union of disks domain \(\hat{\Omega}_N := \bigcup_{i=1}^N B_R(\boldsymbol{c}_i)\) with \(B_R(\boldsymbol{c})\) being a disk of radius \(R\) and center \(\boldsymbol{c} \in \mathbb{R}^2\). The domain can naturally be split into DD-suitable overlapping disks, following the spirit of~\cite{cancesDomainDecompositionImplicit2013}. Each particle generates a radial potential, modeled by a truncated Coulomb potential \(V_{\textnormal{C}}\), around its center fulfilling \labelcref{ass:pot-ess-bounded}. We see that the unit cell here is \(\hat{\Omega}_0 = (R-r,R+r) \times (-R,R)\), which we complement with defect regions in the mesh at both \(x\)-ends to form a union of disks. However, the factorization theory (see the \emph{principle of defect invariance} in~\cite{stammQuasiOptimalFactorizationPreconditioner2022}) and the DD analysis from \cref{sec:analysis} still apply (assuming asymptotic insignificance of the defect regions, which is valid and could theoretically be achieved by enriching the coarse space with all DOFs in these regions). The neighboring interactions between two particles are neglected for simplicity, but we must add the two boundary ghost centers to the collection \(\boldsymbol{\mathcal{C}} = \{\boldsymbol{c}_i\}_{i=1}^N \cup \{(R-2r,0)^T, (R+2Nr,0)^T\}\) to fulfill the assumption \labelcref{ass:pot-dir-periodic}. The resulting potential then reads \(V(\boldsymbol{z}) = V_{\textnormal{C}}\left(\min_{\boldsymbol{c} \in \boldsymbol{\mathcal{C}}}{\|\boldsymbol{z} - \boldsymbol{c}\|}_2\right)\) with \(V_{\textnormal{C}}(r) = - Z / \max\{r,b\}\) for some parameters \(Z,b > 0\). A visualization of the potential and an exemplary \(\mathbb{P}_1\) finite element mesh is given in \cref{sfig:union_pot_mesh}. 

\subsubsection{Flexibility test of the coarse space within eigensolvers}
With the union of disks setup, we now test the coarse space within the SI-LOPCG solver from \cref{alg:si-lopcg}. To demonstrate the flexibility of the PerFact coarse space, we not only test the ASM2 method but also consider the stationary RAS2 iteration (multiplicative coarse correction) from \cref{eq:two-level-stationary-ras-mult-cc} as well as the RAS2 preconditioner from \cref{eq:dd-preconditioners,eq:two-level-preconditioners} with the PerFact coarse space from \cref{eq:algebraic-coarse-space} to use within the GMRES method (restart not reached). For the parameters \(Z = 1, b = 10^{-4}\), a series of computations is then performed for \(N \in \{1, 2, 4, \dots, 128\}\) on meshes with \(x\)-length proportional to \(N\) while the coarse space dimension is \(|V_0| = N + 2\). The discretization results in \(1353\) nodes for the unit cell \(\hat{\Omega}_0\), and the resulting solution is focused in the center of the domain. A visualization of the first eigenfunction for \(N = 4\) is given in \cref{sfig:first_eigenfunction}.
\par
\begin{figure}[t]
  \centering%
  \includegraphics[width=0.49\linewidth]{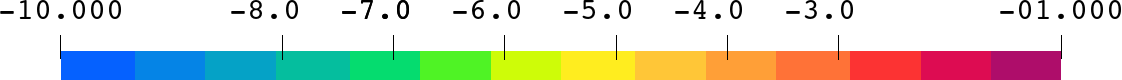}%
  \hfill%
  \includegraphics[width=0.49\linewidth]{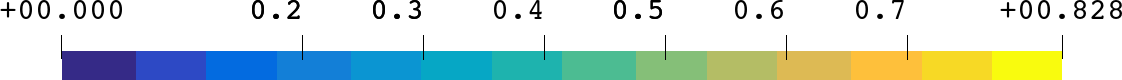}%
  \\%
  \subfloat[%
    Potential function and union of disks mesh.\label{sfig:union_pot_mesh}
  ]{\includegraphics[width=0.5\linewidth]{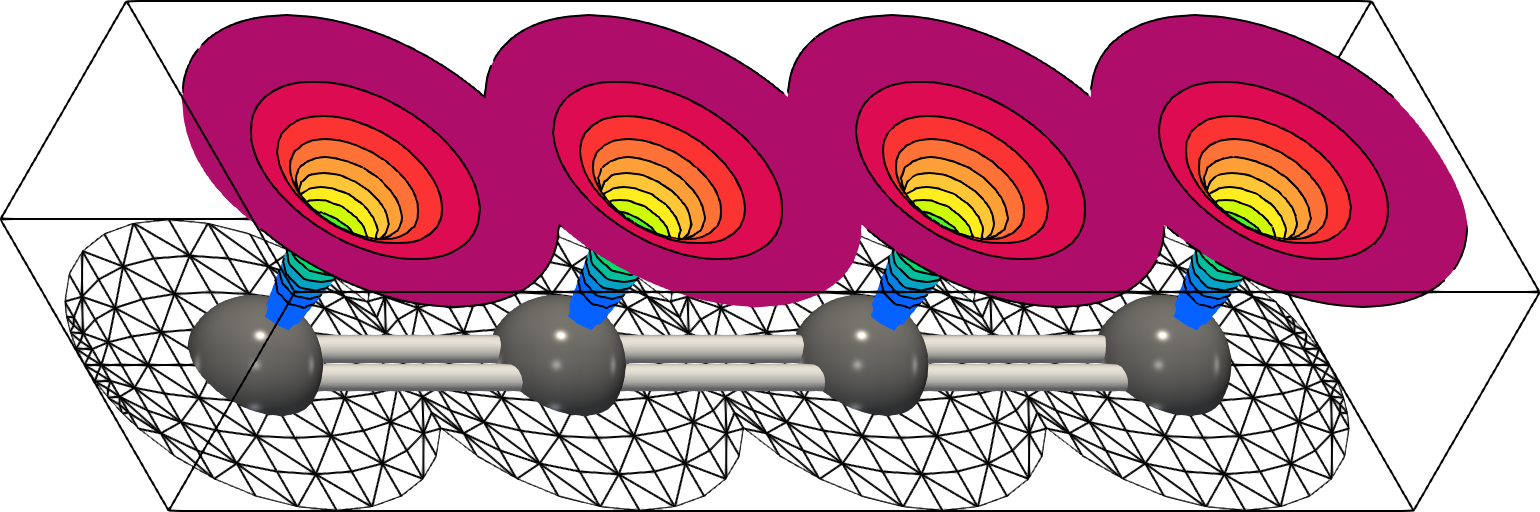}}%
  \subfloat[%
    First eigenfunction \(\phi\) for \(N=4\).\label{sfig:first_eigenfunction}%
  ]{\includegraphics[width=0.5\linewidth]{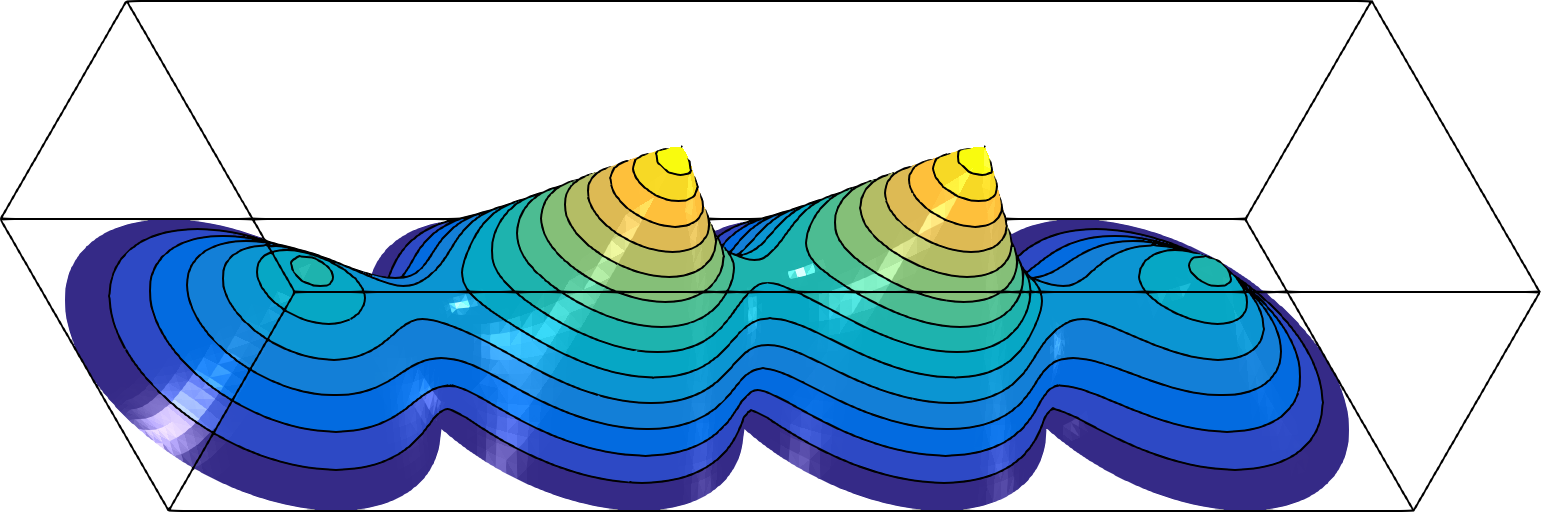}}%
  \caption{%
    A union of disks domain \(\Omega_N\) for \(N=4\) with \textnormal{\textbf{(a)}} the applied symmetric potential \(V\) (truncated to \([-10,-1]\) for visualization) and an exemplary \(\mathbb{P}_1\) mesh and \textnormal{\textbf{(b)}} the resulting first eigenfunction \(\phi\). Both color scales divided the listed interval into 14 colors.%
  }\label{fig:union-of-disks-sketch}%
\end{figure}
\par
The resulting iteration numbers are reported in \cref{tab:silopcg_iteration_numbers}. First, we observe that the one-level RAS1 is, as expected, not robust for increasing \(L\), such that the computations are skipped for \(L > 32\). The stationary RAS2 method performs reasonably well and can be used as an inner solver. However, the Krylov accelerated CG+ASM2 and GMRES+RAS2 variants work better, having much fewer inner iterations per outer step. They are also robust with respect to \(L\) since the maximum number of inner iterations, denoted by \(\max_{\textnormal{i}}\), is bounded from above. Combined with bounded outer iterations obtained by the QOSI strategy, this yields a bounded number of inner iterations, denoted by \(\sum_{\textnormal{i}}\), which measures computational cost. We also observe that the unsymmetric GMRES+RAS2 case has fewer iterations than the symmetric CG+ASM2 case \textendash\ which is the expected behavior. Furthermore, the outer iterations are not influenced by the inner solver. Another interesting observation relates to the case when the maximum number of inner iterations \(k_{\max} = 10000\) is reached for the RAS1 (\(N = 32\)) method. Here, we observe that even in that case, convergence of the eigenvalue solver can still be achieved. This opens questions about the interplay between inner and outer tolerances, which we discuss in the following.
\par
\begin{table}[t]
  \footnotesize%
  \centering%
  \caption{%
  Inner and outer iteration numbers of the SI-LOPCG method (\(\boldsymbol{x}_0 = \boldsymbol{e}_1\), \(\boldsymbol{x}_1 = \boldsymbol{1}/\|\boldsymbol{1}\|_{\boldsymbol{B}}\)) using the stationary RAS1, stationary RAS2, CG+ASM2, and GMRES+RAS2 as inner solvers (with zero initial guesses). Skipped simulations are indicated with \(\dagger\). We apply a relative tolerance of \(\texttt{rTOL}_{\textnormal{i}} = 10^{-8}\) for the inner residuals and an absolute tolerance (since the eigenvector is normalized after each iteration) of \(\texttt{TOL}_{\textnormal{o}} = 10^{-8}\) for the outer spectral residuals. For the inner solver, \(k_{\max} = 10000\) applies, and \(10000^*\) is displayed when \(k_{\max}\) is reached. We abbreviate with \(\textnormal{it}_{\textnormal{o}}\) the outer iterations, with \(\max_{\textnormal{i}}\) the maximal number of inner iterations as a measure for the worst case, and with \(\sum_{\textnormal{i}}\) the sum of all inner iterations (approximately computational costs).
  We note that the slight increase in the number of iterations for higher \(L\) values is not due to the inner convergence rate but rather due to increased norms of the initial values (like in \cref{fig:cg_convergence_with_coarse_space}), which, in turn, naturally stem from changes in the computational domain. %
  }\label{tab:silopcg_iteration_numbers}%
  \resizebox{\columnwidth}{!}{%
  \pgfplotstabletypeset[
    col sep = comma,
    every head row/.style={
      output empty row,
      before row={
        \toprule
        & \multicolumn{3}{c}{\texttt{RAS1}} & \multicolumn{3}{c}{\texttt{RAS2}} & \multicolumn{3}{c}{\texttt{CG+ASM2}} & \multicolumn{3}{c}{\texttt{GMRES+RAS2}} \\
        \cmidrule(lr){2-4} \cmidrule(lr){5-7} \cmidrule(lr){8-10} \cmidrule(lr){11-13}
        \(N \sim L\) & \(\textnormal{it}_{\textnormal{o}}\) & \(\max_{\textnormal{i}}\) & \(\sum_{\textnormal{i}}\) & \(\textnormal{it}_{\textnormal{o}}\) & \(\max_{\textnormal{i}}\) & \(\sum_{\textnormal{i}}\) & \(\textnormal{it}_{\textnormal{o}}\) & \(\max_{\textnormal{i}}\) & \(\sum_{\textnormal{i}}\) & \(\textnormal{it}_{\textnormal{o}}\) & \(\max_{\textnormal{i}}\) & \(\sum_{\textnormal{i}}\) \\
      },
      after row=\midrule
    },
    every last row/.style={after row=\bottomrule},
    every column/.style={string type,column type = {l}},
    columns/Lx/.style={string type,column type = {r}},
    columns/ras1_it_outer/.style={string replace={0}{\(\dagger\)}},
    columns/ras1_maxit_inner/.style={string replace={0}{\(\dagger\)},string replace={10000}{10000*}},
    columns/ras1_sumit_inner/.style={string replace={0}{\(\dagger\)}},
    columns/ras2_maxit_inner/.style={string replace={10000}{10000*}},
  ]%
  {./tables/data-union.csv}%
  }%
\end{table}

\subsection{Fusing the loops}
Instead of the fixed tolerance of \(\texttt{rTOL}_{\textnormal{i}} = 10^{-8}\), we can also choose it proportionally to the outer residual norm as \(\texttt{rTOL}_{\textnormal{i}} = \min\{0.1,{\|\boldsymbol{r}_{\textnormal{o},k}\|}_2\}\), inspired by~\cite{freitagConvergenceInexactInverse2007}. We compare these two cases (fixed/adaptive) with a third approach, which skips the inner Krylov solver and directly uses \(\boldsymbol{M}_{\textnormal{RAS},2}^{-1}\) as a preconditioner in the LOPCG method, i.e., solving \(\boldsymbol{M}_{\textnormal{RAS},2} \boldsymbol{w}_k = \boldsymbol{r}_{\textnormal{o},k}\) in line~\ref{algstep:inner-loop} of \cref{alg:si-lopcg} (instead of \(\boldsymbol{A}_{\sigma} \boldsymbol{w}_k = \boldsymbol{r}_{\textnormal{o},k}\)). This strategy still has the shift-and-invert effect and does not suffer from a high condition number.
\par
To render the case more complex, a three-dimensional plane-like box \(\Omega_L = {(0,L)}^2 \times (0,1)\) is considered with a potential \(V\) that is periodic in the \(x\)- and \(y\)-directions and has a linear gradient in the \(z\)-direction. On purpose, we now choose an unsymmetric potential (not fulfilling \labelcref{ass:symm-pot}), which is given by
\begin{equation}
  V(x,y,z) = 10 (4 + \sin{2 \pi x} + \sin{4 \pi x} + 2 \sin{2 \pi y} + 2 \sin{4 \pi y} + z)
  .
\end{equation}
For the domain decomposition, we also apply the most general case by using an unstructured METIS partition. \cref{fig:fusing_sketch} presents the partition, \(V\), and an exemplary first eigenfunction for the case of \(L=2\). With a structured \(\mathbb{Q}_1\)-discretization using \(h=1/10, \delta=1,\texttt{TOL}_{\textnormal{o}}=10^{-10}\) and inner initial guess of zero, we perform a series of computations for \(L \in \{4,8,16,32\}\) and keep track of all inner iterations. As we observe in \cref{fig:fusing_iteration_numbers}, the method is still robust w.r.t.~to \(L\), even for the unsymmetric potential case. Although the adaptive strategy can significantly reduce the number of iterations, the fused approach leads to the fastest convergence. The idea of fusing the inner and the outer loop could even be carried further to the case of nonlinear eigenvalue problems, which have an additional third loop to handle the nonlinearity.  
\par
\begin{figure}[t]
  \centering%
  \subfloat[%
    METIS partition of \(\Omega_2\).\label{sfig:fusing_partition}
  ]{\includegraphics[width=0.3333\linewidth]{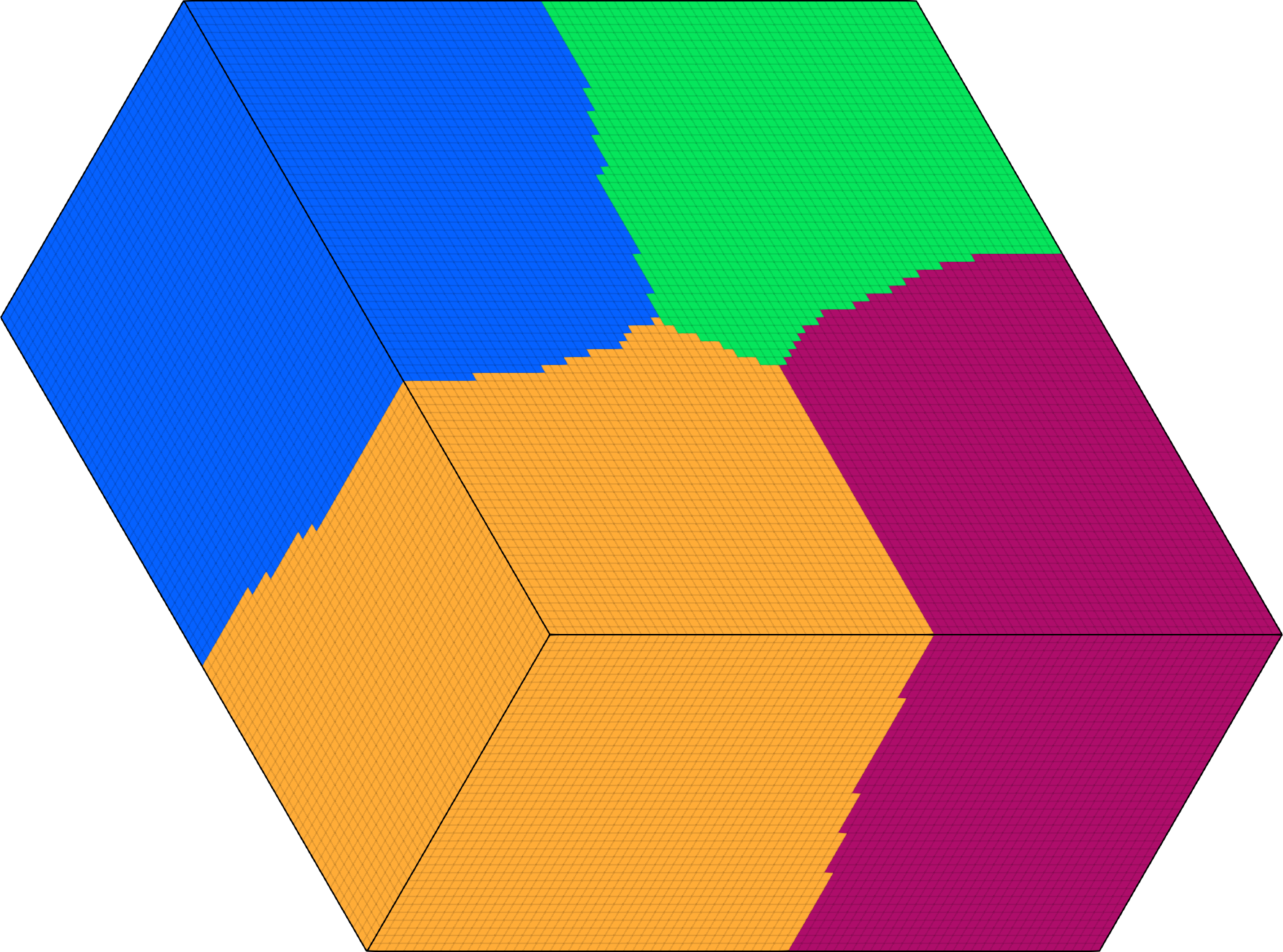}}%
  \hfill%
  \subfloat[%
    Potential \(V\) (unsymmetric).\label{sfig:fusing_potential}%
  ]{\includegraphics[width=0.3333\linewidth]{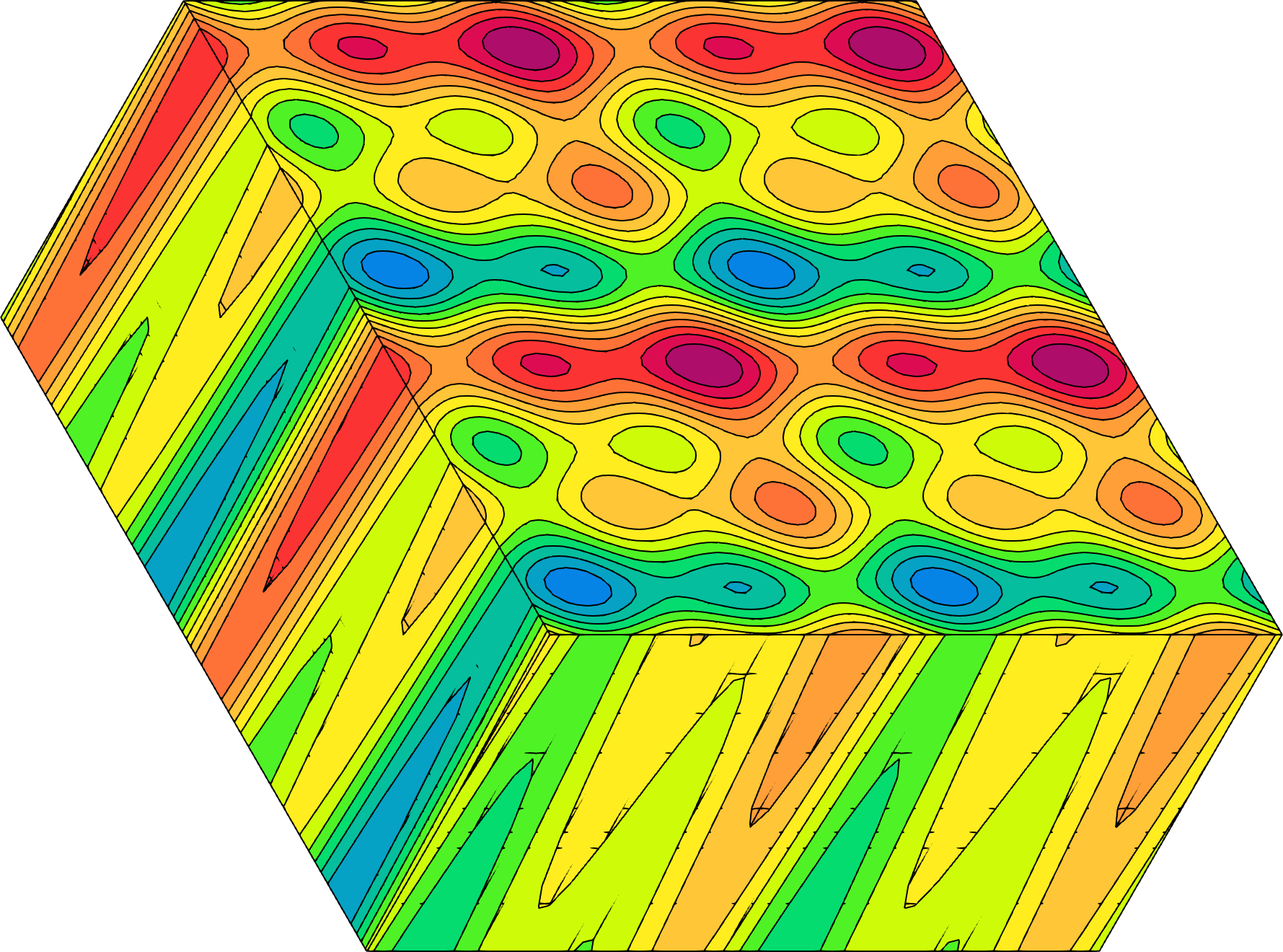}}%
  \hfill%
  \subfloat[%
    First eigenfunction (\(L=2\)).\label{sfig:fusing_first_eigenfunction}%
  ]{\includegraphics[width=0.3333\linewidth]{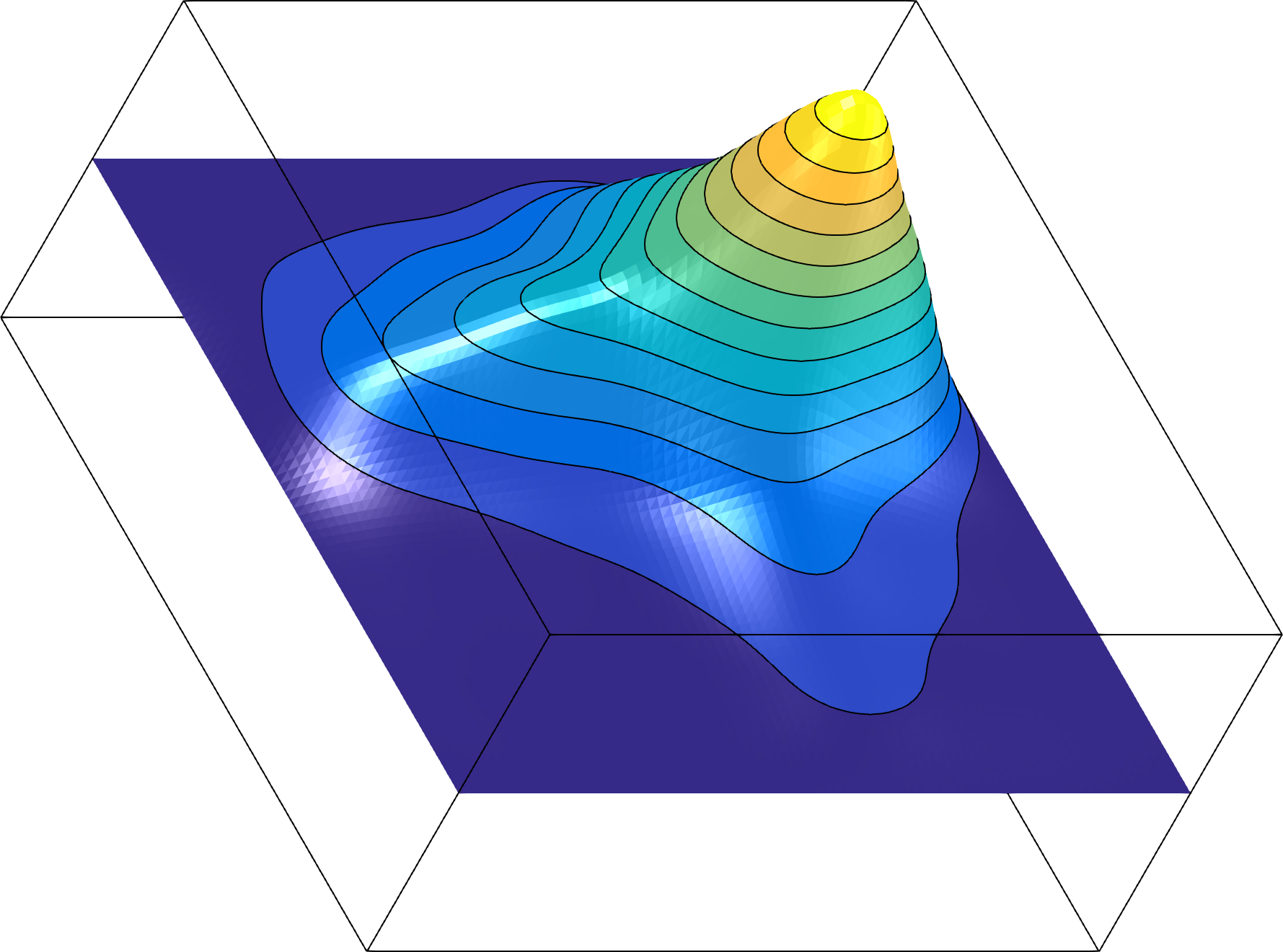}}%
  \caption{%
    \textnormal{\textbf{(a)}} An unstructured METIS element partition of \(\Omega_2\) into \(\{\Omega_i'\}_{i=1}^4\), \textnormal{\textbf{(b)}} a periodic but unsymmetric potential \(V\), and \textnormal{\textbf{(c)}} the first eigenfunction for the case of \(L=2\) with \(h=1/40\).
  }\label{fig:fusing_sketch}%
\end{figure}
\par
\par
\begin{figure}[t]%
  \centering%
  \newcommand{\datapath}{./plots/fused-iteration-numbers/data}%



\begin{tikzpicture}

  \def\LxList{4,8,16}
  \def\methodList{fixed,adaptive,fused}

  \definecolor{color1}{rgb}{0,    ,0.4470,0.7410}
  \definecolor{color2}{rgb}{0.8500,0.3250,0.0980}
  \definecolor{color3}{rgb}{0.9290,0.6940,0.1250}
  \definecolor{color4}{rgb}{0.4940,0.1840,0.5560}
  \definecolor{color5}{rgb}{0.4660,0.6740,0.1880}
  \definecolor{color6}{rgb}{0.3010,0.7450,0.9330}
  \definecolor{color7}{rgb}{0.6350,0.0780,0.1840}
  \pgfplotscreateplotcyclelist{matlab}{
    color1,every mark/.append style={solid},mark=*\\
    color2,every mark/.append style={solid},mark=square*\\
    color3,every mark/.append style={solid},mark=triangle*\\
    color4,every mark/.append style={solid},mark=halfsquare*\\
    color5,every mark/.append style={solid},mark=pentagon*\\
    color6,every mark/.append style={solid},mark=halfcircle*\\
    color7,every mark/.append style={solid,rotate=180},mark=halfdiamond*\\
    color1,every mark/.append style={solid},mark=diamond*\\
    color2,every mark/.append style={solid},mark=halfsquare right*\\
    color3,every mark/.append style={solid},mark=halfsquare left*\\
  }

\begin{groupplot}[
  group style={
    group size= 4 by 1,
    horizontal sep=0.15cm,
    vertical sep=0.45cm,
  },
  ymin=1E-8,
  ymax=10E0,
  xmode=linear,
  ymode=log,
  legend style={
    font=\normalsize,
    nodes={
      scale=0.75,
      transform shape
    }
  },
  legend pos=south west,
  legend columns=9,
  height=3.5cm,
  width=4.25cm,
  cycle list name=matlab,
  ticklabel style = {font=\footnotesize},
  grid=major,
  xlabel={inner it.~$k$},
  xmin=-1,
  xmax=399,
]

  \nextgroupplot[
    title={\(L=4\)},
    legend to name=legend_it_nums_231110,
    mark size=1.0pt,
  ]
  \foreach \method in \methodList {
    \addplot+[
    ] table [
      x index=0,
      y index=1,
      col sep=comma,
    ] {\datapath/its_\method_4.csv};
  }
  \legend{GMRES+RAS2 (fixed $\texttt{rTOL}_{\textnormal{i}}$), GMRES+RAS2 (adaptive $\texttt{rTOL}_{\textnormal{i}}$), \(\boldsymbol{M}_{\textnormal{RAS,2}}^{-1}\)}

  \nextgroupplot[
    title={\(L=8\)},
    yticklabels={},
    mark size=1.0pt,
  ]
  \foreach \method in \methodList {
    \addplot+[
    ] table [
      x index=0,
      y index=1,
      col sep=comma,
    ] {\datapath/its_\method_8.csv};
  }

  \nextgroupplot[
    title={\(L=16\)},
    yticklabels={},
    mark size=1.0pt,
  ]
  \foreach \method in \methodList {
    \addplot+[
    ] table [
      x index=0,
      y index=1,
      col sep=comma,
    ] {\datapath/its_\method_16.csv};
  }

  \nextgroupplot[
    title={\(L=32\)},
    yticklabels={},
    ylabel={spectral.~res.~${\|\boldsymbol{r}_{\textnormal{o},k}\|}_2$},
    yticklabel pos=right,
    mark size=1.0pt,
  ]
  \foreach \method in \methodList {
    \addplot+[
    ] table [
      x index=0,
      y index=1,
      col sep=comma,
    ] {\datapath/its_\method_32.csv};
  }

\end{groupplot}

\coordinate (c3) at ($(group c1r1)!.5!(group c4r1)$);
\node[below] at (c3 |- current bounding box.south)
  {\pgfplotslegendfromname{legend_it_nums_231110}};

\end{tikzpicture}

  \caption{%
    Comparison of the total number of inner iterations for a fixed inner tolerance, an adaptive inner tolerance, and direct usage of the RAS2-preconditioner within the LOPCG method.%
  }\label{fig:fusing_iteration_numbers}%
\end{figure}
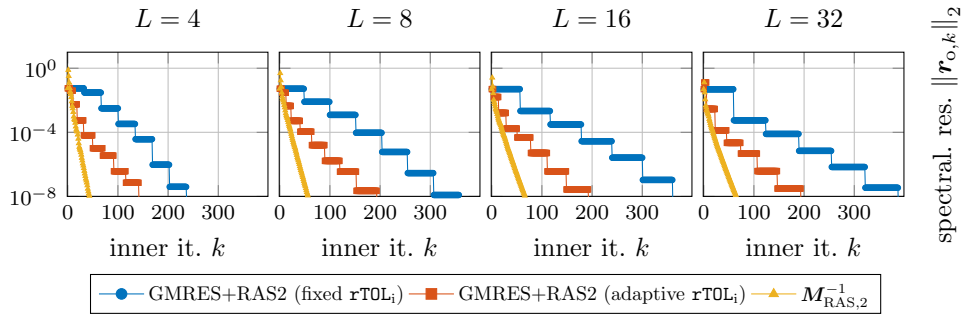%

\section{Conclusion and future work}
In this paper, we presented a new domain decomposition preconditioner for the quasi-optimally shifted Schrödinger operator, which is robust with respect to anisotropic domain expansion. This setup is motivated by 1d structures (e.g., carbon nanotubes) or 2d materials (e.g., graphene) in material science. Initially, we academically motivated the fundamental dilemma of effective shifting for eigenvalue problems in contrast to the fast convergence of iterative eigenvalue solvers. With the new factorization preconditioner, we successfully combined both aspects to simultaneously achieve fast convergence of both the eigenvalue and linear solvers. For our analysis, we utilized tools from the theory of spectral coarse spaces to prove a condition number bound for the preconditioned system. Operating in a setup where coercivity asymptotically vanishes, we took special care in adapting the geometrical setup and the theory accordingly, for instance, by considering a hypothetical alignment of the domain decomposition. Numerical experiments confirmed the theoretical results and demonstrated the performance of the coarse space within eigensolvers even when theoretical assumptions do not apply.  
\par
Limitations of the method include the assumptions on the potentials required to apply the theory and the periodicity assumption. Consequently, future work can focus on extending the theory to more general potentials (since the method also works in these cases) and applying it to more realistic nonlinear eigenvalue problems, such as tight-binding models. Furthermore, rather than solving only one problem on a unit cell to obtain a shift and coarse space, we could consider general parameter-dependent eigenvalue problems. In such a scenario, known coarse spaces from the solution of linear systems might serve as an efficient basis to extract a suitable shift for use within the eigenvalue solver. In other words, future efforts could strengthen the connection between solver theory from iterative linear systems to eigenvalue solvers.



\apptocmd{\sloppy}{\vbadness10000\relax}{}{}
\apptocmd{\sloppy}{\hbadness10000\relax}{}{}
\bibliographystyle{siamplain_mod}
\bibliography{refs_betterbibtex_journalabbrevs}

\end{document}